\documentclass[12pt,reqno]{amsart}
\usepackage[margin=1in]{geometry}

\newcommand{\DateOfPub}[1]{}

\usepackage[UKenglish]{babel}

\usepackage{amsfonts,amsmath,amsthm,amssymb} 
\usepackage[foot]{amsaddr}
\usepackage{mathrsfs} 
\usepackage{stmaryrd} 
\usepackage{mathtools}

\usepackage{pifont} 
\newcommand{\cmark}{\ding{51}}%
\newcommand{\xmark}{\ding{55}}%

\usepackage{lscape} 

\usepackage{natbib}
\usepackage{url}

\usepackage{comment} 
\usepackage{graphicx} 
\usepackage{setspace}
\usepackage{color}

\usepackage{multirow}

\allowdisplaybreaks[1]

\newtheorem{theorem}{Theorem}[section]
\newtheorem{proposition}[theorem]{Proposition}
\newtheorem{lemma}[theorem]{Lemma}
\newtheorem{corollary}[theorem]{Corollary}

\theoremstyle{remark}
\newtheorem{remark}{Remark}
\numberwithin{equation}{section}

\newcommand\EatDot[1]{}

\newcommand{\E}{\mathbf{E}}
\newcommand{\F}{\mathbf{F}}
\newcommand{\N}{\mathbf{N}}
\newcommand{\R}{\mathbf{R}}

\newcommand{\diff}{\mathrm{d}}
\newcommand{\resid}{\mathcal{R}}

\newcommand{\sfG}{\mathsf{G}}
\newcommand{\sfK}{\mathsf{K}}
\newcommand{\calF}{\mathcal{F}}

\newcommand{\TV}{\mathrm{TV}}
\newcommand{\vol}{\mathrm{vol}}
\newcommand{\zero}{\mathbf{0}}
\newcommand{\generator}{\mathcal{L}}
\newcommand{\AModel}{A^{\mathrm{m}}}
\newcommand{\bModel}{b^{\mathrm{m}}}
\newcommand{\Proj}{\mathrm{Proj}}

\DeclareMathOperator{\tr}{tr}

\DeclareMathOperator*{\argmin}{argmin}


\title[Estimation of SDE via OGD]{Parametric estimation of stochastic differential equations via online gradient descent}
\author[S Nakakita]{Shogo Nakakita}
\address{Komaba Institute for Science\\
the University of Tokyo\\
3-8-1 Komaba, Meguro-ku, Tokyo 153-8902, Japan}

\begin{document}
\maketitle

\begin{abstract}
    We propose an online parametric estimation method of stochastic differential equations with discrete observations and misspecified modelling based on online gradient descent.
    Our study provides uniform upper bounds for the risks of the estimators over a family of stochastic differential equations.
    The derivation of the bounds involves three underlying theoretical results: the analysis of the stochastic mirror descent algorithm based on dependent and biased subgradients, the simultaneous exponential ergodicity of classes of diffusion processes, and the proposal of loss functions whose approximated stochastic subgradients are dependent only on the known model and observations.
    
    \medskip
    \noindent \textsc{Keywords}: diffusion processes; discrete observations; misspecified models; online gradient descent; simultaneous ergodicity; stochastic differential equations; stochastic mirror descent
\end{abstract}

\section{Introduction}

\subsection{Problem}
Let us consider the parametric estimation of the following $d$-dimensional stochastic differential equation (SDE):
\begin{align*}
    \diff X_{t}^{a,b}=b\left(X_{t}^{a,b}\right)\diff t+a\left(X_{t}^{a,b}\right)\diff w_{t},\ X_{0}^{a,b}=x\in\R^{d},\ t\ge0.
\end{align*}
SDEs describe dynamics with randomness and allow for flexible model structures under mild conditions.
Therefore, they are used to model phenomena in broad disciplines such as finance, biology, epidemiology, physics, meteorology, and machine learning.
In this study, we propose an online parametric estimation method of $a$ and $b$ based on $\{X_{ih_{n}}^{a,b}\}_{i=0,\ldots,n}$ with $h_{n}>0$, discrete observations of the diffusion process $\{X_{t}^{a,b}\}_{t\ge0}$ defined by the SDE.

Batch estimation of SDEs with discrete observations is a classical and important problem for statistics of SDEs \citep[see][]{Flo89,Yos92,GJ93,BS95,Kes97,Hof99,KS99,GHR04}.
In recent studies, this topic has been intensively investigated from various statistical perspectives such as asymptotic expansions of the densities for maximum likelihood estimation \citep{Li13,Choi15}, nonparametric estimation \citep{GL14,CoG21}, nonparametric Bayesian estimation \citep{vdMvZ13,NS17}, and quasi-likelihood analysis \citep{Yos11,Mas13,UY13}.
Moreover, diverse model settings such as L\'{e}vy-driven SDEs \citep{SY06,Mas13,GLM18,ClG20,MMU22}, small diffusions \citep{SU03,GS09,GLV14}, and stochastic partial differential equations \citep{BT20,Cho20,HT21} have been examined.
Notably, some studies propose computationally efficient batch estimators and show that they achieve asymptotic efficiency \citep{UY12,KUY17,KU18a,KU18b}.

Online estimation, where the estimator is updated as data are acquired, is also a typical and significant concern in time series data analysis because it is quite useful for real-time decision making.
For example, Kalman filtering is one of the most classical online estimation methods based on time series data.
However, most studies on the online parametric estimation of SDEs depend on the setting of continuous observations $\{X_{t}^{a,b}\}_{t\ge0}$ \citep{SP19,BC21,SK22}, which is restrictive in real data analysis.
Hence, we aim to propose online estimation methods for SDEs with discrete observations.

We provide uniform upper bounds for the risk of the parametric estimation of both diffusion and drift coefficients of SDEs with discrete observations and model misspecification via online gradient descent. 
Those bounds give theoretical convergence guarantees of the proposed online estimation method for SDEs with discrete observations, which are the main contribution of our study.
To derive the bounds, we combine the three theoretical discussions: (i) model-wise non-asymptotic risk bound for the stochastic mirror descent (SMD) with dependent and biased subgradients; (ii) simultaneous ergodicity and uniform moment bounds for a class of SDEs; and (iii) the proposal of loss functions for the online parametric estimation.
Consequently, this study also contributes to technical topics such as (1) gradient-based online optimization for dependent data and (2) the simultaneous ergodicity of a family of SDEs and statistical topics such as SDEs' (3) estimation with non-asymptotic uniform risk bounds, (4) computationally efficient estimation, and (5) misspecified modelling.

\subsection{Literature review}

\subsubsection{Gradient-based online optimization algorithms for dependent data}
Gradient-based online optimization algorithms and their asymptotic properties have been widely discussed \citep[for example, see][and the studies cited therein]{KY03}.
Moreover, recent studies consider the non-asymptotic properties of stochastic optimization algorithms under dependence.
\citet{DAJ+12} analyse SMD with mixing noises and demonstrate its non-asymptotic upper bound in expectation and with high probability.
\citet{SSY18} consider stochastic gradient descent with noises that are a Markov chain under both convex and nonconvex loss functions.
\citet{BJN+20} consider stochastic regression problems with Markov chains and propose a stochastic gradient descent algorithm based on the experience replay technique popular in reinforcement learning.
\citet{KNJ+21} examine the parametric estimation of Markov chains via stochastic gradient descent with modifications inspired by the experience replay technique.
\citet{BCM+21} investigate the 2-Wasserstein distance between the law of stochastic gradient Langevin dynamics with dependent data and the target distribution.

The recursive estimation of time series models, such as recursive maximum likelihood estimation (RMLE), is one of the classical topics in time series analysis, and some results are based on gradient-based approaches \citep[for an overview, see][]{SGH+19}.
\citet{Ryd97} discusses hidden Markov models with discrete state space and shows the gradient-based stochastic algorithm that converges to a stationary point of the Kullback--Leibler divergence.
\citet{SGH+19} show that the proposed gradient-based RMLE for hidden Markov models converges to the local minimum of the Kullback--Leibler divergence between true and estimated densities even with model misspecification.
\citet{SP19} consider stochastic gradient approaches for the RMLE of diffusion processes based on continuous but partial observations.
\citet{BC21} illustrate the convergence of online drift estimation of jump-diffusion processes developed on continuous observations and stochastic gradient descent.
\citet{SK22} investigate infinite-dimensional linear diffusion processes with partial observations and propose RMLE based on two-timescale stochastic gradient descent and stochastic filtering for the hidden state via their estimation method.

In addition to stochastic approaches, regret analyses for the parameter estimation of time series models via online optimization algorithms have also been discussed.
\citet{AHM+13} suggest online estimation methods of autoregressive moving-average (ARMA) processes based on online gradient descent and online Newton steps \citep[see][]{HAK07} and demonstrate the bounds for their regrets; \citet{LHZ+16} extend the discussion to autoregressive integrated moving-average (ARIMA) processes.

\subsubsection{Simultaneous ergodicity of a family of SDEs}
The ergodicity of solutions of SDEs, including L\'{e}vy-driven ones, is a central topic in the research on stochastic processes \citep[e.g.,][]{Ver88,Ver97,PV01,Mas07,Kul17}.
In statistical inference, the simultaneous exponential ergodicity for a family of SDEs, that is, whose convergence in total variation is uniform in the family, is useful to provide non-asymptotic uniform risk bounds.
\citet{GP14} examine the exponential ergodicity for a class of Markov chains and show the simultaneous ergodicity for a class of one-dimensional diffusion processes as applications.
\citet{Kul09} also considers the simultaneous exponential ergodicity for a class of SDEs driven by jump processes.

\subsubsection{Sequential estimation for diffusion processes}
A sequential estimation scheme proposes pairs of stopping rules and estimators for prescribed precision.
Remarkably, it provides non-asymptotic bounds on the risk of the estimators of stochastic processes \citep[see][]{LS01a,LS01b}.

Most studies on the parametric sequential estimation of SDEs are based on the linearity of the model with respect to the parameters.
\citet{Nov72} considers sequential plans for diffusion processes and examines one-dimensional parameters.
\citet{GK97} study the plans for linear stochastic regression models being diffusion processes, whereas \citet{GK01} extend it to semimartingales.
The sequential parametric estimation of SDEs with time delay has been studied as well \citep[see][]{KV01,KV05,KV10}.

Nonparametric sequential estimation is also a topic that has gathered research interest.
\citet{GP05} propose a nonparametric sequential estimation method for diffusion processes and demonstrate that it achieves asymptotic optimality in the context of pointwise risks.

Some studies consider truncated sequential estimation, in which the stopping rules are bounded by a fixed sample size, and upper bounds for the estimators' precision are dependent on the sample size.
For instance, \citet{Vas14} studies parametric and nonparametric truncated sequential estimation based on ratio-type functionals of stochastic processes including L\'{e}vy-driven Ornstein--Uhlenbeck processes with discrete observations.
\citet{GP06,GP11,GP15} propose the nonparametric truncated sequential estimation of discretely observed diffusion processes and present non-asymptotic upper bounds for some risks in addition to asymptotic efficiency of the estimator.
\citet{GP22} discuss diffusion processes with the parameter whose dimension is greater than the sample size and obtain upper bounds for the quadratic risk of their nonparametric sequential estimators and asymptotic efficiency.

\subsubsection{Computationally efficient estimation of diffusion processes}
Several studies propose estimation methods associated with convex optimization, such as Lasso estimation or the Dantzig selector for diffusion processes with discrete observations.
\cite{DI12} demonstrate the oracle properties of adaptive Lasso-type estimation for diffusion processes with discrete observations.
\citet{MS17} investigate regularized estimation with general quasi-log-likelihood functions and its asymptotic properties, which apply to the estimation of diffusion processes.
The proposed estimators in these studies can be obtained efficiently with convex optimization algorithms if negative quasi-log-likelihood functions are convex.
\cite{Fuj19} studies the Dantzig selector for high-dimensional parameters of linear SDEs that can be solved through linear programming.

Notably, \citet{KUY17} and \citet{KU18a,KU18b} propose a hybrid estimation wherein Bayes estimators with reduced sample sizes are used as the initial values for multi-step estimators via Newton--Raphson algorithms.
It shows asymptotic efficiency and experimentally good performance for nonlinear structures of parameters.

\subsubsection{Misspecified modelling of SDEs}
Model misspecification is one of the classical topics in statistics of SDEs.
\citet{McK84}, \citet{Yos90}, and \citet{Kut04,Kut17} consider asymptotic properties of drift estimation with continuous observations and misspecified models.
\citet{UY11} study Wiener-driven SDEs with discrete observations and find that the convergence rate of diffusion estimation with misspecification can be different from those with the correct specifications.
\citet{Ueh19} illustrates that the convergence rates of parametric estimation of some L\'{e}vy-driven SDEs remain the same regardless of model misspecification.
\citet{Ogi21} exhibits the asymptotic mixed normality of an estimator of diffusion coefficients of Wiener-driven SDEs under model misspecification, fixed observation terminals, and discrete and noisy observations and applies neural network modelling in estimation.

\subsection{Contributions}
Our result on uniform upper bounds for the risks of the online parametric estimation is based on three technical contributions that are of independent interest: (i) model-wise non-asymptotic upper bounds for the risk of SMD with dependent and biased subgradients; (ii) the simultaneous exponential ergodicity for a class of SDE models; (iii) the proposal of convex loss functions for parametric estimation and the stochastic approximation of the subgradients.

Selecting drift estimation as an example in this section, we set the convex and compact parameter space $\Theta\subset\R^{p}$ and the triple of measurable functions $\left(\bModel,M,J\right)$ such that $\bModel\left(x,\theta\right)$ is the possibly misspecified parametric model, $M\left(x\right)$ is a positive semi-definite matrix-valued function, and $J\left(\theta\right)$ is the regularization term.
We define the following function:
\begin{align*}
    \phi\left(x,y,\theta\right):=\frac{1}{2}M\left(x\right)\left[\left(y-\bModel\left(x,\theta\right)\right)^{\otimes2}\right]+J\left(\theta\right).
\end{align*}
Assume that $\phi\left(x,y,\theta\right)$ is convex in $\theta$ for all $x,y\in\R^{d}$ and has measurable elements in the subdifferential for all $x,y,$ and $\theta$.
$\{\theta_{i};i=1,\ldots,n+1\}$ defined by the following online gradient descent algorithm
\begin{align*}
    \theta_{i+1}:=\Proj_{\Theta}\left(\theta_{i}-\frac{h_{n}}{\sqrt{i}}\partial_{\theta} \phi\left(X_{\left(i-1\right)h_{n}}^{a,b},\frac{1}{h_{n}}\Delta_{i}X^{a,b},\theta_{i}\right)\right),
\end{align*}
with an arbitrary initial value $\theta_{1}\in \Theta$ and a sequence of discrete observations $\{X_{ih_{n}}^{a,b};i=0,\ldots,n\}$, is then well-defined as a sequence of random variables by choosing measurable subgradients, where $\Delta_{i}X^{a,b}=X_{ih_{n}}^{a,b}-X_{\left(i-1\right)h_{n}}^{a,b}$ and $h_{n}>0$ is the discretization step.
Note that the learning rate chosen here does not lead to the best convergence but is simple and approximately the best in our study.
Our contributions (i) and (iii) provide the following risk bound for the estimator $\Bar{\theta}_{n}:=\frac{1}{n}\sum_{i=1}^{n}\theta_{i}$ with a fixed $\left(a,b\right)$: for some $c>0$,
\begin{align*}
    &\sup_{\theta\in\Theta}\left(\E_{x}^{a,b}\left[f^{a,b}\left(\Bar{\theta}_{n}\right)\right]-f^{a,b}\left(\theta\right)\right)\le c\left(\frac{\log nh_{n}^{2}}{\sqrt{nh_{n}^{2}}}+h_{n}^{\beta/2}\right),
\end{align*}
where $\beta\in\left[0,1\right]$ is a parameter controlling the smoothness of $b$, $\E_{x}^{a,b}$ is the expectation over $\{X_{t}^{a,b}\}_{t\ge0}$ with $X_{0}^{a,b}=x$, $f^{a,b}\left(\theta\right)=\int M\left(\xi\right)[(\bModel\left(\xi,\theta\right)-b\left(\xi\right))^{\otimes2}]\Pi^{a,b}\left(\diff \xi\right)+J\left(\theta\right)$ is the loss function, and $\Pi^{a,b}$ is the invariant probability measure of $X_{t}^{a,b}$.
The contribution (ii) yields the existence of $S:=\left\{\left(a,b\right)\right\}$, a class of coefficients of SDEs, and $c$ such that the inequality holds uniformly in $S$; hence, the following uniform risk bound holds:
\begin{align*}
    &\sup_{\left(a,b\right)\in S}\sup_{\theta\in\Theta}\left(\E_{x}^{a,b}\left[f^{a,b}\left(\Bar{\theta}_{n}\right)\right]-f^{a,b}\left(\theta\right)\right)\le c\left(\frac{\log nh_{n}^{2}}{\sqrt{nh_{n}^{2}}}+h_{n}^{\beta/2}\right).
\end{align*}
Note that $\Bar{\theta}_{n}$ estimates the best $\theta\in\Theta$ (or the quasi-optimal parameter; \citealp[see][]{UY11}) with $\bModel\left(\cdot,\theta\right)$ closest to the true $b\left(\cdot\right)$ in the $L^{2}\left(\Pi^{a,b}\right)$-distance.
Moreover, if the model $\bModel$ correctly specifies $b$, that is, for all $\left(a,b\right)\in S$ there exists $\theta$ such that $b\left(\cdot\right)=\bModel\left(\cdot,\theta\right)$, then $f^{a,b}\left(\theta\right)=0$ and we obtain the uniform upper bound of $\E_{x}^{a,b}\left[f^{a,b}\left(\Bar{\theta}_{n}\right)\right]$.

One simple but significant outcome of the above discussion is a non-asymptotic risk guarantee of the following online gradient descent for linear models such that an arbitrary initial value $\theta_{1}\in \Theta$,
\begin{align*}
    \theta_{i+1}:=\Proj_{\Theta}\left(\theta_{i}+\frac{1}{\sqrt{i}}\left(\partial_{\theta}\bModel\left(X_{\left(i-1\right)h_{n}}^{a,b},\theta_{i}\right)\right)\left(\Delta_{i}X^{a,b}-h_{n}\bModel\left(X_{\left(i-1\right)h_{n}}^{a,b},\theta_{i}\right)\right)\right),
\end{align*}
where $\bModel\left(x,\theta\right)$ is the possibly misspecified parametric model whose components are linear in $\theta\in\Theta$.
Note that it corresponds to the case $M\left(x\right)=I_{d}$, $J\left(\theta\right)=0$.
As evident, the uniform risk bound for the estimator $\Bar{\theta}_{n}:=\frac{1}{n}\sum_{i=1}^{n}\theta_{i}$ over a certain family $S$ of the coefficients $a,b$ holds: for some $c>0$,
\begin{align*}
    &\sup_{\left(a,b\right)\in S}\sup_{\theta\in\Theta}\left(\E_{x}^{a,b}\left[\int \left\|\bModel\left(\xi,\Bar{\theta}_{n}\right)-b\left(\xi\right)\right\|_{2}^{2}\Pi^{a,b}\left(\diff \xi\right)\right]-\int \left\|\bModel\left(\xi,\theta\right)-b\left(\xi\right)\right\|_{2}^{2}\Pi^{a,b}\left(\diff \xi\right)\right)\\
    &\qquad\le c\left(\frac{\log nh_{n}^{2}}{\sqrt{nh_{n}^{2}}}+h_{n}^{\beta/2}\right).
\end{align*}
If $\bModel$ correctly specifies $b$, then
\begin{align*}
    \sup_{\left(a,b\right)\in S}\E_{x}^{a,b}\left[\int \left\|\bModel\left(\xi,\Bar{\theta}_{n}\right)-b\left(\xi\right)\right\|_{2}^{2}\Pi^{a,b}\left(\diff \xi\right)\right]\le c\left(\frac{\log nh_{n}^{2}}{\sqrt{nh_{n}^{2}}}+h_{n}^{\beta/2}\right).
\end{align*}

Table \ref{tab:literature} presents a comparison of the estimator $\Bar{\theta}$ with those derived via other methods (we do not compare the diffusion estimation methods because the rates of convergence depend on model specification, and thus, the comparison is not straightforward; see \citealp{UY11}).
Our online estimation method has several advantages: the convergence under misspecification is guaranteed, uniform risk bounds are derived, computational complexities in online estimation are lower, and non-differentiable $a$ and $b$ with respect to $x$ are allowed.
The convergence rate of $\Bar{\theta}$ is not optimal \citep[see][]{Gob02}; however, this is not peculiar because the optimal rate by \citet{Gob02} is obtained for $a$ and $b$ with some degrees of smoothness, which we do not assume (a small improvement by assuming twice-differentiability is immediate by It\^{o}'s formula; see Section 4).
Rather, note that we derive the consistency and rate of convergence without using explicit unbiased predictions for increments \citep{BS95,KS99} or second-order differentiability for the It\^{o}--Taylor expansion to obtain the uniform law of large numbers \citep[see][]{Flo89,Kes97} via the argument by \citet{IH81}.

\begin{table}[ht]
    \caption{\small Comparison of our result on $p$-dimensional parameter estimation of the drift coefficient $b$ with discrete observations and fixed sample size $n$ with the results of other studies.
    We term computational complexity to estimate $m$ times based on $\left\{X_{ih_{n}};i=0,\ldots,k\left[n/m\right]\right\}$ for all $k=1,\ldots,m$ with $m=1,\ldots,n$ as the ``complexity in online estimation''.
    We set $\mathcal{O}\left(p^{3}\right)$ as the complexity of matrix inversion.
    The evaluation of the least squares estimation (LSE) for linear SDE models is based on the study by \citet{UY11} and an additional assumption that the diffusion coefficient is the identity matrix.
    The complexity for the approach of \citet{KUY17} is given if we ignore the computational complexity of the initial Bayes estimator.
    $\mathcal{O}\left(np\right)$ in line with the approach of \citet{Vas14} is obtained by assuming the independence of each component of the SDEs because the result is applicable for one-dimensional Ornstein--Uhlenbeck processes.
    The complexity of our estimation is achieved by assuming simple $\Theta$, letting the projection on $\Theta$ be $\mathcal{O}\left(p\right)$.
    ``Non-smooth coefficients are allowed'' implies that $a$ and $b$ can be non-differentiable with respect to $x$.
    Note that our method's convergence rate is obtained by choosing $\eta_{i}:=h_{n}/\sqrt{i\log nh_{n}^{2}}$ and assuming a usual identifiability condition: see Section 4.
    }
    \tiny
    \begin{tabular}{l|l|l|l|l|l}
        \multirow{3}{*}{Method} & misspecified & uniform & complexity  & non-smooth & convergence rate  \\
        & modelling & risk bounds & in $m$-times & coefficients & of the estimator \\
        & is allowed & are shown & online estimation & are allowed & ($1/\sqrt{nh_{n}}$ is optimal) \\\hline\hline
        LSE for linear SDE models & \multirow{2}{*}{\cmark} & \multirow{2}{*}{\xmark} & \multirow{2}{*}{$\mathcal{O}\left(np^{2}+mp^{3}\right)$} & \multirow{2}{*}{\xmark} & \multirow{2}{*}{$1/\sqrt{nh_{n}}$}\\
        \citep[using][]{UY11} & & & & & \\\hline
        Hybrid estimation  & \multirow{2}{*}{\xmark} & \multirow{2}{*}{\xmark} &  \multirow{2}{*}{$\mathcal{O}\left(m\left(n+p\right)p^{2}\right)$} & \multirow{2}{*}{\xmark} & \multirow{2}{*}{$1/\sqrt{nh_{n}}$}\\
        \citep{KUY17} & & & & & \\\hline
        Truncated estimation & \multirow{2}{*}{\xmark} & \multirow{2}{*}{\cmark} & \multirow{2}{*}{$\mathcal{O}\left(np\right)$ (if diagonal)} & \multirow{2}{*}{\xmark} & \multirow{2}{*}{$1/\sqrt{nh_{n}^{2}}$} \\
        \citep{Vas14} & & & &  & \\\hline
        Online gradient descent & \multirow{2}{*}{\cmark} & \multirow{2}{*}{\cmark} & \multirow{2}{*}{$\mathcal{O}\left(np\right)$} &\multirow{2}{*}{\cmark} &  \multirow{2}{*}{$\sqrt[4]{\frac{\log nh_{n}^{2}}{nh_{n}^{2}}+h_{n}^{\beta}}$}\\
        (this study) & & & & & \vspace{5mm}
    \end{tabular}
    \label{tab:literature}
\end{table}

To allow the risk bound and estimator to be convergent, it is necessary to assume $nh_{n}^{2}\to\infty$ and $h_{n}\to0$ as $n\to\infty$.
This seems to be a peculiar assumption compared to $nh_{n}\to\infty$ and $nh_{n}^{p}\to0$ for some $p\ge2$, which is often assumed in statistics of SDEs.
This is because of the properties of approximations for subgradients of loss functions; if $h_{n}$ is smaller, the bias of the approximation is smaller, but the variance is greater, and vice versa.
Therefore, the frequency of observations should be moderate---neither too low nor too high---to balance the two risk bound terms.
Notably, other estimation methods may not satisfy their regularity conditions under moderate frequency observations, and the comparison may not be straightforward.

In addition to the uniform risk guarantees of the proposed online estimation, this study also contributes to the two technical topics (1) gradient-based estimation of dependent data and (2) the simultaneous ergodicity of classes of SDEs, which are used for the derivation of the guarantees.
Our result generalizes an existing result on SMD with dependent subgradients by considering the approximations of the subgradients, which are essential in the estimation of SDEs with discrete observations.
We relax the conditions for the simultaneous ergodicity of a class of SDEs, whereas the previous studies which obtain the concise representation of constants assume strong conditions such as $d=1$ and smoothness of $a$ and $b$.
Our results contribute to three statistical topics (3) estimation with non-asymptotic uniform risk guarantees, (4) computationally efficient one, and (5) misspecified modelling of SDEs as well.
This study adds new results to the derivation of non-asymptotic uniform risk bounds, which has also been studied in the sequential estimation for discretely observed diffusion processes.
The computational complexity of our estimation method is $\mathcal{O}\left(np\right)$ if the complexity of $\Proj_{\Theta}$ is $\mathcal{O}\left(p\right)$, which is minimal in online estimation and lower than other methods achieving consistency.
Our study is also a novel approach to analysing misspecified models using convex optimization, whereas the existing studies are based on analyses of quasi-log-likelihood functions.

\subsection{Outline}
Sections 2--4 demonstrate our contributions with respect to optimization, probability, and statistics separately.
Section 2 presents the convergence guarantees of SMD with biased and dependent subgradients, which extends the discussion of \citet{DAJ+12}.
In Section 3, we consider simultaneous ergodicity and moment bounds based on the recent result on Aronson-type estimates for the transition density functions of SDEs \citep{MPZ21}.
Section 4 includes the main result on the online parametric estimation of SDEs, applying a classical discussion in the estimation of diffusion processes \citep[e.g., see][]{Kes97}.

\subsection{Notation}
$\left\|\cdot\right\|_{\ast}$ is the dual norm of a norm $\left\|\cdot\right\|$.
For any convex function $f:\R^{p}\to\R$, $\partial f\left(a\right):=\left\{b\in\R^{p};f\left(x\right)\ge f\left(a\right)+\left\langle b,x-a\right\rangle,\ \forall x\in\R^{p}\right\}$ is the subdifferential of $f$ at $a\in\R^{p}$.
For any vector $x\in\R^{\ell}$ with $\ell\in\N$ and $p\in\left[1,\infty\right]$, $\left\|x\right\|_{p}$ denotes $\ell^{p}$-norm for vectors.
For arbitrary matrix $A$, $\left\|A\right\|_{2}$ and $\left\|A\right\|_{F}$ denote the spectral and Frobenius norms.
$\left\|\mu\right\|_{\TV}:=\mu_{+}\left(\mathbf{X}\right)+\mu_{-}\left(\mathbf{X}\right)$ is the total variation norm of any finite signed measure $\mu$ on a measurable space $\left(\mathbf{X},\mathcal{X}\right)$, where $\mu=\mu_{+}-\mu_{-}$ is the Hahn decomposition for $\mu$.
For closed $C\subset\R^{p}$, $\Proj_{C}\left(x\right)$ denotes a projection of $x\in\R^{p}$ onto $C$.
For any matrix $A$, $A^{\top}$ is the transpose of $A$, and $A^{\otimes2}:=AA^{\top}$.
For any two matrices $A,B$ with the same size, $A\left[B\right]=\tr\left(A^{\top}B\right)$.

\section{Stochastic mirror descent with dependence and bias}
Our first result is an extension of that by \citet{DAJ+12}, which discusses the SMD algorithm with dependent noises.
Specifically, we provide convergence guarantees for the SMD algorithm based on the approximated subgradients of latent loss functions dependent on ergodic noises, which is necessary to view the convergence rate of our estimators discussed in Section 4.
First, we review the problem setting of \citet{DAJ+12} to understand the reason that an extension is needed.
Subsequently, we provide the theoretical convergence guarantees for the SMD algorithm based on approximated subgradients.

\subsection{Motivation}
\citet{DAJ+12} consider the minimization problem for the convex loss function $f\left(\theta\right)$ defined as 
\begin{align*}
    f\left(\theta\right):=\E\left[F\left(\theta;\xi\right)\right]=\int_{\Xi}F\left(\theta;\xi\right)\Pi\left(\diff \xi\right),
\end{align*}
where $\theta\in\Theta$, $\Theta\subset\R^{p}$ is a compact convex set, $\xi$ is a random variable whose distribution is given as $\Pi$, $\Xi$ is the state space of $\xi$, and $\left\{F\left(\cdot;\xi\right);\xi\in\Xi\right\}$ is a family of convex functions.
They show convergence in expectation and with high probability of the SMD algorithm using the subgradient of the sampled loss functions $\left\{F\left(\cdot;\xi_{i}\right);i=1,\ldots,n\right\}$, where $n$ is the sample size and $\left\{\xi_{i}\right\}$ is an ergodic process whose invariant probability measure is $\Pi$.

In statistical estimation, a loss function of interest sometimes depends on the true values of the unknown parameters.
Considering the estimation of i.i.d.\ random variables or discrete-time stochastic processes, we often obtain an equivalent optimization problem with another loss function and sampled version, whose gradients depend only on observations and are independent of true values.
For instance, let us consider a stochastic regression $y_{i}=x_{i}^{\top}\theta+\varepsilon_{i}$, where $\{x_{i}\}$ and $\{\epsilon_{i}\}$ are square-integrable centred stationary mixing processes independent of each other.
If the loss function $f(\theta)$ is defined as $f(\theta):=\E[(x_{i}^{\top}\theta-x_{i}^{\top}\theta^{\star})^{2}]$ with $\theta^{\star}$ denoting the true value of $\theta$, then $\E[(x_{i}^{\top}\theta-x_{i}^{\top}\theta^{\star})^{2}]=\E[(x_{i}^{\top}\theta-y_{i})^{2}]-\E[\varepsilon_{i}^{2}]=:f'(\theta)$.
If we let $F'(\theta;\xi_{i})=(x_{i}^{\top}\theta-y_{i})^{2}-\varepsilon_{i}^{2}$, the gradient $\partial F'=2\theta(x_{i}^{\top}\theta-y_{i})$ is dependent on observations and independent of $\theta^{\star}$.
Hence, we can guarantee the convergence of the SMD algorithm based on $\partial F'$ for both $f(\theta)$ and $f'(\theta)$ in expectation.

However, in the estimation of diffusion processes based on discrete observations, considering an equivalent optimization problem independent of the true values of the parameters is difficult.
Alternatively, we can observe approximated subgradients of sampled loss functions based on discrete observations and independent of the true values.
Hence, we examine the convergence of the SMD algorithm based on approximate subgradients, which leads to the convergence of the estimation of diffusion processes via online gradient descent.

As an intuitive example of approximation, let us consider the estimation of an ergodic diffusion process $\left\{X_{t};t\ge0\right\}$ defined by a parametric SDE $\diff X_{t}=b\left(X_{t},\theta\right)\diff t+a\left(X_{t}\right)\diff w_{t},\ X_{0}=x_{0}$ with the invariant probability measure $\Pi$ and $b\left(x,\theta\right)$, whose elements are linear in $\theta$ for all $x$.
We estimate $\theta$ based on discrete observations $\left\{X_{ih_{n}};i=0,\ldots,n\right\}$ with the discretization step $h_{n}>0$.
A typical loss function for estimation of $\theta$ is the $L^{2}$-loss function $f\left(\theta\right)=\int \left\|b\left(x,\theta\right)-b\left(x,\theta^{\star}\right)\right\|_{2}^{2}\Pi\left(\diff x\right)$, with the true value $\theta^{\star}$, and its sampled version is $F\left(\theta;\xi_{i}\right)=\left\|b\left(X_{\left(i-1\right)h_{n}},\theta\right)-b\left(X_{\left(i-1\right)h_{n}},\theta^{\star}\right)\right\|_{2}^{2}$, with $\xi_{i}=X_{\left(i-1\right)h_{n}}$.
As it is difficult to provide functions based on the observations and independent of $\theta^{\star}$, whose optimization is equivalent to $f\left(\theta\right)$, we consider an approximately equivalent problem independent of $\theta^{\star}$.
The following holds:
\begin{align*}
    &\left\|b\left(X_{\left(i-1\right)h_{n}},\theta\right)-b\left(X_{\left(i-1\right)h_{n}},\theta^{\star}\right)\right\|_{2}^{2}\\
    &=\frac{1}{h_{n}^{2}}\left\|\Delta_{i}X-h_{n}b\left(X_{\left(i-1\right)h_{n}},\theta\right)\right\|_{2}^{2}-\frac{1}{h_{n}^{2}}\left\|\Delta_{i}X-h_{n}b\left(X_{\left(i-1\right)h_{n}},\theta^{\star}\right)\right\|_{2}^{2}\\
    &\quad+\frac{2}{h_{n}}\left\langle b\left(X_{\left(i-1\right)h_{n}},\theta\right)-b\left(X_{\left(i-1\right)h_{n}},\theta^{\star}\right),\Delta_{i}X-h_{n}b\left(X_{\left(i-1\right)h_{n}},\theta^{\star}\right)\right\rangle,
\end{align*}
where $\Delta_{i}X:=X_{ih_{n}}-X_{\left(i-1\right)h_{n}}$.
Note that the second term is independent of $\theta$, and the third term on the right-hand side is negligible under mild conditions when $h_{n}$ is small; therefore, we expect that the SMD algorithm based on an observable sequence of random functions $\left\|\Delta_{i}X-h_{n}b\left(X_{\left(i-1\right)h_{n}},\theta\right)\right\|_{2}^{2}/h_{n}^{2}$ should approximately optimize $f\left(\theta\right)$.

Notably, we need not examine the mixing properties of the approximate subgradients, which are sometimes not obvious in the estimation of diffusion processes based on discrete and partial observations.
Approximating contrast functions using latent diffusion processes is quite common in studies on the estimation of diffusion processes based on partial observations such as integrated \citep{Glo00,Glo06} and noisy observations \citep{Fav14,Fav16}; therefore, we can expect the convergence guarantees for SMD with approximated subgradients to provide a simple but useful tool to analyse online estimation methods based on various observation schemes.

\subsection{Stochastic mirror descent algorithm and the key decomposition}
We state the problem and propose the SMD algorithm with approximate subgradients.
$\left(\Omega,\mathcal{A},P\right)$ denotes the probability space.
$\left(\Xi,\mathcal{B}\left(\R^{d}\right)|_{\Xi}\right)$ with $\Xi\in\mathcal{B}\left(\R^{d}\right)$ is the state space of a latent ergodic process $\left\{\xi_{i};i\in\N\right\}$ with the invariant probability measure $\Pi$ on $\left(\Xi,\mathcal{B}\left(\R^{d}\right)|_{\Xi}\right)$.
We set a compact and convex set $\Theta\in \mathcal{B}\left(\R^{p}\right)$ as the parameter space with the norm $\|\cdot\|$.

Let $\left\{F\left(\cdot;\xi\right);\xi\in\Xi\right\}$ be a family of real-valued convex functions defined on $N_{\Theta}$, where $N_{\Theta}$ is an open neighbourhood of $\Theta$.
We assume a convex function $f$ such that
\begin{align*}
    f\left(\theta\right):=\int_{\Xi}F\left(\theta;\xi\right)\Pi\left(\diff \xi\right)
\end{align*}
is finite-valued for all $\theta\in N_{\Theta}$.
We consider the following minimization problem:
\begin{align*}
    \min_{\theta\in\Theta}f\left(\theta\right).
\end{align*}

We let $\partial F\left(\theta;\xi\right)$ denote the subdifferential of $F$ with respect to $\theta$ and assume that there exists a $\left(\mathcal{B}\left(\R^{d}\right)|_{\Xi}\right)\otimes\left(\mathcal{B}\left(\R^{p}\right)|_{N_{\Theta}}\right)$-measurable function $\sfG\left(\theta;\xi\right)$ such that $\sfG\left(\theta;\xi\right)\in\partial F\left(\theta;\xi\right)$ for all $\theta\in\Theta$ and $\xi\in\Xi$.

We set a prox-function $\psi$, a differentiable $1$-strongly convex function on $N_{\Theta}$ with respect to the norm $\left\|\cdot\right\|$.
$D_{\psi}$ is the Bregman divergence generated by $\psi$ such that for all $\theta,\theta'\in\Theta$,
\begin{align*}
    D_{\psi}\left(\theta,\theta'\right):=\psi\left(\theta\right)-\psi\left(\theta'\right)-\left\langle \nabla \psi\left(\theta'\right),\theta-\theta'\right\rangle \ge \frac{1}{2}\left\|\theta-\theta'\right\|^{2}.
\end{align*}
We assume $\sup_{\theta_{1},\theta_{2}\in\Theta}D_{\psi}\left(\theta_{1},\theta_{2}\right)\le R^{2}/2$ for some $R>0$.
Note that SMD is equivalent to stochastic gradient descent if $\psi=\|\cdot\|_{2}^{2}/2$ and $\|\cdot\|=\|\cdot\|_{\ast}=\|\cdot\|_{2}$.

We consider the SMD algorithm based on the gradients of the approximating functions $H_{i,n}\left(\cdot\right)$ for $F\left(\cdot;\xi_{i}\right)$.
Let $\left\{H_{i,n}\left(\cdot\right);i=1,\ldots,n\right\}$ be a sequence of real-valued random convex functions on $N_{\Theta}$. 
Assume that there exists an $\mathcal{A}\otimes\left(\mathcal{B}\left(\R^{p}\right)|_{N_{\Theta}}\right)$-measurable random function $\sfK_{i,n}\left(\theta\right)$ such that $\sfK_{i,n}\left(\theta\right)\in\partial H_{i,n}\left(\theta\right)$ almost surely (a.s.) for all $\theta\in\Theta$.
We define the SMD update: for all $i=1,\ldots,n$ and arbitrary chosen $\theta_{1}\in\Theta$,
\begin{align}
    \theta_{i+1}=\argmin_{\theta\in\Theta}\left\{\left\langle \sfK_{i,n}\left(\theta\right),\theta\right\rangle+\frac{1}{\eta_{i}}D_{\psi}\left(\theta,\theta_{i}\right)\right\},\label{SMDUpdate}
\end{align}
where $\left\{\eta_{i}\right\}$ is a sequence of non-increasing positive numbers denoting learning rates.

Let $\resid_{\tau,n}$ with $\tau\in\N_{0}\left(:=\N\cup\left\{0\right\}\right)$ be a random function of a sequence of $\Theta$-valued random variables $\left\{\vartheta_{i}\right\}$ such that
\begin{align}
    \resid_{\tau,n}\left(\left\{\vartheta_{i}\right\}\right)&:=\sum_{i=1}^{n-\tau}\left(F\left(\vartheta_{i};\xi_{i+\tau}\right)-H_{i+\tau,n}\left(\vartheta_{i}\right)\right);
\end{align}
we use the abbreviation $\resid_{\tau,n}\left(\theta^{\prime}\right):=\resid_{\tau,n}\left(\left\{\theta^{\prime}\right\}\right)$ for non-random $\theta'\in\Theta$.
$\resid_{\tau,n}$ measures the degrees of discrepancy between $F\left(\cdot;\xi_{i}\right)$ and $H_{i,n}\left(\cdot\right)$.

The following decomposition for $\tau\in\N_{0}$ is useful:
\begin{align}
    \sum_{i=1}^{n}\left(f\left(\theta_{i}\right)-f\left(\theta^{\prime}\right)\right)
    &=\sum_{i=1}^{n-\tau}\left(f\left(\theta_{i}\right)-f\left(\theta^{\prime}\right)-F\left(\theta_{i};\xi_{i+\tau}\right)+F\left(\theta^{\prime};\xi_{i+\tau}\right)\right)\notag\\
    &\quad+\sum_{i=1}^{n-\tau}\left(H_{i+\tau,n}\left(\theta_{i}\right)-H_{i+\tau,n}\left(\theta_{i+\tau}\right)\right)+\sum_{i=\tau+1}^{n}\left(H_{i,n}\left(\theta_{i}\right)-H_{i,n}\left(\theta^{\prime}\right)\right)\notag\\
    &\quad+\sum_{i=n-\tau+1}^{n}\left(f\left(\theta_{i}\right)-f\left(\theta^{\prime}\right)\right)+\resid_{\tau,n}\left(\left\{\theta_{i}\right\}\right)-\resid_{\tau,n}\left(\theta^{\prime}\right),\label{eq:SMDDec}
\end{align}
which is a trivial extension to (6.2) by \citet{DAJ+12}.

\subsection{Convergence in expectation}
We define the Hellinger distance between two probability measures $P$ and $Q$ defined on the common measurable space such that
\begin{align}
    d_{\mathrm{Hel}}\left(P,Q\right):=\sqrt{\int \left(\sqrt{\frac{\diff P}{\diff \mu}}-\sqrt{\frac{\diff Q}{\diff \mu}}\right)^{2}\diff \mu},
\end{align}
where $\mu$ is a measure such that $P$ and $Q$ are absolutely continuous with respect to $\mu$.
Such a $\mu$ exists; for example, $P$ and $Q$ are absolutely continuous with respect to $\frac{1}{2}\left(P+Q\right)$.

Let us consider that $\F:=\left\{\calF_{i};i\in\N_{0}\right\}$ is a filtration such that $\sigma\left(\xi_{j};j\le i\right)\subset \calF_{i}$ for all $i\in\N_{0}$ and $\sigma\left(\theta_{j};j\le i+1\right)\subset \calF_{i}$ for all $i=0,\ldots,n$.
Note that $\calF_{i}$-measurability of $\theta_{i+1}$ is natural because $\theta_{i+1}$ depends on $\xi_{1},\ldots,\xi_{i}$ if we do not consider the approximation of $F\left(\cdot;\xi_{i}\right)$ with $H_{i,n}\left(\cdot\right)$.
We do not determine a concrete $\F$ because appropriate selection depends on applications.

We define the mixing time for $\xi_{i}$ with respect to the Hellinger distance based on the filtration $\F$: $P_{\left[i\right]|\F}:=\left\{P_{\left[i\right]|\F}^{j};j> i\right\},\ i\in\N_{0}$ which denotes a family of $P_{\left[i\right]|\F}^{j}$, the conditional distribution of $\xi_{j}$ given $\calF_{i}$ with $j>i$, and 
\begin{align}
    \tau\left(P_{\left[i\right]|\F},\epsilon\right)&:=\inf\left\{\tau\in\N;d_{\mathrm{Hel}}^{2}\left(P_{\left[i\right]|\F}^{i+\tau},\Pi\right)\le \epsilon^{2}\right\}.
\end{align}
\begin{remark}
Note that we only consider the Hellinger distance for the analysis of the SMD algorithm, whereas \citet{DAJ+12} also present the result for the total variation distance under the assumption that the dual norms of the subgradients are a.s.\ bounded by a positive constant.
It is because our analysis is based on the uniform boundedness of the expectation of the squared dual norms of subgradients.
This assumption is even weaker than that of \citet{DAJ+12} for the result using the Hellinger distance, which assumes the uniform a.s.\ boundedness of the conditional expectation of the squared dual norms with respect to the filtration.
\end{remark}
Let us present some assumptions.
\begin{enumerate}
    \item[(A1)] There exists a constant $G>0$ such that for all $i\in\N$, $\calF_{i\wedge n-1}$-measurable $\Theta$-valued random variable $\vartheta_{i}$,
    \begin{align*}
        \E\left[\left\|\sfG\left(\vartheta_{i};\xi_{i}\right)\right\|_{\ast}^{2}\right]\le G^{2}.
    \end{align*}
    \item[(A2)] There exists a constant $K_{n}>0$ such that for all $i=1,\ldots,n$, $\calF_{i-1}$-measurable $\Theta$-valued random variables $\vartheta_{i}$, 
    \begin{align*}
        \E\left[\left\|\sfK_{i,n}\left(\vartheta_{i}\right)\right\|_{\ast}^{2}\right]\le K_{n}^{2}.
    \end{align*}
    \item[(A3)] The mixing times of $\left\{\xi_{i}\right\}$ are uniform in the sense that there exists a uniform mixing time in expectation $\tau_{\E}\left(P_{|\F},\epsilon\right)<\infty$ such that for all $\epsilon>0$,
    \begin{align*}
        \tau_{\E}\left(P_{|\F},\epsilon\right):=\inf\left\{\tau\in\N;\sup_{i\in\N_{0}}\E\left[d_{\mathrm{Hel}}^{2}\left(P_{\left[i\right]|\F}^{i+\tau},\Pi\right)\right]\le \epsilon^{2}\right\}.
    \end{align*}
    For simplicity, we ignore the dependence of $\tau_{\E}$ on $P_{|\F}$ and use the notation $\tau_{\E}\left(\epsilon\right)$.
\end{enumerate}
Note that the different ranges of $i$ in (A1) and (A2) are not essential; we need (A1) with $i\in\N$ to evaluate the expectation with respect to $\Pi$ by Fatou's lemma (see Lemma \ref{lem:op:DAJ+12L63}).


We obtain a version of Theorem 3.1 by \citet{DAJ+12}.

\begin{theorem}\label{thm:op:SMDEx}
Under (A1)--(A3), for any $\epsilon>0$ and $\theta^{\prime}\in\Theta$,
\begin{align*}
    \E\left[\sum_{i=1}^{n}\left(f\left(\theta_{i}\right)-f\left(\theta^{\prime}\right)\right)\right]&\le 2\sqrt{2}GRn\epsilon+\sqrt{2}\left(\tau_{\E}\left(\epsilon\right)-1\right)K_{n}^{2}\sum_{i=1}^{n}\eta_{i}+\frac{R^{2}}{2\eta_{n}}+\frac{K_{n}^{2}}{2}\sum_{i=1}^{n}\eta_{i}\\
    &\quad+\left(\tau_{\E}\left(\epsilon\right)-1\right)GR+\E\left[\resid_{\tau_{\E}\left(\epsilon\right)-1,n}\left(\left\{\theta_{i}\right\}\right)-\resid_{\tau_{\E}\left(\epsilon\right)-1,n}\left(\theta^{\prime}\right)\right].
\end{align*}
\end{theorem}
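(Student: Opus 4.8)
The plan is to take expectations in the identity \eqref{eq:SMDDec} with $\tau=\tau_{\E}(\epsilon)-1$ and bound each of the five groups of terms on its right-hand side; the asserted inequality is then the sum of the five bounds. First I would record two consequences of Lemma \ref{lem:op:DAJ+12L63}. (a) For every fixed $\theta\in\Theta$, $\int_{\Xi}\|\sfG(\theta;\xi)\|_{\ast}^{2}\,\Pi(\diff\xi)\le G^{2}$: apply (A1) with the constant $\vartheta_{i}\equiv\theta$ for each $i\in\N$, note that the marginal law of $\xi_{i}$ converges to $\Pi$ in the Hellinger, hence total-variation, distance as $i\to\infty$ (from (A3) and convexity of $d_{\mathrm{Hel}}^{2}(\cdot,\Pi)$), and pass to the limit by truncating $\|\sfG(\theta;\cdot)\|_{\ast}^{2}$ and using monotone convergence. (b) $\Theta$ has diameter at most $R$, since $D_{\psi}(\theta_{1},\theta_{2})\ge\tfrac12\|\theta_{1}-\theta_{2}\|^{2}$ and $\sup D_{\psi}\le R^{2}/2$; together with convexity of each $F(\cdot;\xi)$, (a), and Jensen's inequality this gives $|f(\theta)-f(\theta')|\le GR$ for all $\theta,\theta'\in\Theta$. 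If $\tau_{\E}(\epsilon)>n$, the claim is immediate from $\E[f(\theta_{i})-f(\theta')]\le GR$ summed over $i=1,\dots,n$; so assume henceforth $0\le\tau\le n$.

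The three algorithmic groups are handled by the standard stochastic mirror descent bookkeeping. The tail $\sum_{i=n-\tau+1}^{n}(f(\theta_{i})-f(\theta'))$ has expectation at most $\tau GR=(\tau_{\E}(\epsilon)-1)GR$ by (b). For the regret group $\sum_{i=\tau+1}^{n}(H_{i,n}(\theta_{i})-H_{i,n}(\theta'))$, convexity of $H_{i,n}$ with $\sfK_{i,n}(\theta_{i})\in\partial H_{i,n}(\theta_{i})$ bounds it by $\sum_{i=\tau+1}^{n}\langle\sfK_{i,n}(\theta_{i}),\theta_{i}-\theta'\rangle$; combining the one-step inequality $\eta_{i}\langle\sfK_{i,n}(\theta_{i}),\theta_{i}-\theta'\rangle\le D_{\psi}(\theta',\theta_{i})-D_{\psi}(\theta',\theta_{i+1})+\tfrac12\eta_{i}^{2}\|\sfK_{i,n}(\theta_{i})\|_{\ast}^{2}$ (first-order optimality of \eqref{SMDUpdate} and $1$-strong convexity of $\psi$) with Abel summation using that $\{\eta_{i}\}$ is non-increasing and $D_{\psi}\le R^{2}/2$ (which telescopes the Bregman terms to $R^{2}/(2\eta_{n})$) and with (A2) for the $\calF_{i-1}$-measurable $\theta_{i}$, its expectation is at most $R^{2}/(2\eta_{n})+\tfrac{K_{n}^{2}}{2}\sum_{i=1}^{n}\eta_{i}$. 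For the shift group $\sum_{i=1}^{n-\tau}(H_{i+\tau,n}(\theta_{i})-H_{i+\tau,n}(\theta_{i+\tau}))$, convexity gives the bound $\sum_{i=1}^{n-\tau}\|\sfK_{i+\tau,n}(\theta_{i})\|_{\ast}\|\theta_{i}-\theta_{i+\tau}\|$, and $\|\theta_{i}-\theta_{i+\tau}\|\le\sum_{j=i}^{i+\tau-1}\|\theta_{j+1}-\theta_{j}\|\le\sum_{j=i}^{i+\tau-1}\eta_{j}\|\sfK_{j,n}(\theta_{j})\|_{\ast}$ (again from first-order optimality and strong convexity); a Cauchy--Schwarz on each product together with (A2) bounds every $\E[\|\sfK_{i+\tau,n}(\theta_{i})\|_{\ast}\|\sfK_{j,n}(\theta_{j})\|_{\ast}]$ by $K_{n}^{2}$ (here $\theta_{i}$ is $\calF_{i+\tau-1}$-measurable and $i+\tau\le n$), so this group's expectation is at most $\tau K_{n}^{2}\sum_{i=1}^{n}\eta_{i}\le\sqrt{2}\,(\tau_{\E}(\epsilon)-1)K_{n}^{2}\sum_{i=1}^{n}\eta_{i}$. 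The residual difference $\resid_{\tau,n}(\{\theta_{i}\})-\resid_{\tau,n}(\theta')$ is simply retained, matching the last term of the statement.

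The main obstacle is the mixing group $\sum_{i=1}^{n-\tau}(f(\theta_{i})-f(\theta')-F(\theta_{i};\xi_{i+\tau})+F(\theta';\xi_{i+\tau}))$. Conditioning its $i$-th summand on $\calF_{i-1}$ (with respect to which $\theta_{i}$ is measurable), writing $g_{i}(\xi):=F(\theta_{i};\xi)-F(\theta';\xi)$ and letting $Q_{i}:=P^{i+\tau}_{[i-1]|\F}$ be the conditional law of $\xi_{i+\tau}$ given $\calF_{i-1}$ (note $i+\tau=(i-1)+\tau_{\E}(\epsilon)$), the conditional expectation equals $\int_{\Xi}g_{i}(\xi)\,(\Pi-Q_{i})(\diff\xi)$ because $f(\vartheta)=\int F(\vartheta;\xi)\,\Pi(\diff\xi)$. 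The key deterministic estimate is
\begin{align*}
    \left|\int g\,(\diff P-\diff Q)\right|\le d_{\mathrm{Hel}}(P,Q)\sqrt{2\left(\int g^{2}\,\diff P+\int g^{2}\,\diff Q\right)},
\end{align*}
obtained from $p-q=(\sqrt{p}-\sqrt{q})(\sqrt{p}+\sqrt{q})$, Cauchy--Schwarz, and $(\sqrt{p}+\sqrt{q})^{2}\le 2(p+q)$. Applying it with $g=g_{i}$, $P=\Pi$, $Q=Q_{i}$, and using convexity of $F(\cdot;\xi)$ together with the diameter bound $\|\theta-\theta'\|\le R$ from (b) to get $g_{i}(\xi)^{2}\le R^{2}(\|\sfG(\theta_{i};\xi)\|_{\ast}^{2}+\|\sfG(\theta';\xi)\|_{\ast}^{2})$, fact (a) gives $\int g_{i}^{2}\,\diff\Pi\le 2R^{2}G^{2}$ almost surely, while (A1) applied at index $i+\tau\le n$ with $\vartheta_{i+\tau}=\theta_{i}$ (which is $\calF_{i+\tau-1}$-measurable) gives $\E[\int g_{i}^{2}\,\diff Q_{i}]=\E[g_{i}(\xi_{i+\tau})^{2}]\le 2R^{2}G^{2}$. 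Hence the bound on the $i$-th conditional expectation can be written $A_{i}B_{i}$ with $\E[A_{i}^{2}]\le 8R^{2}G^{2}$ and $\E[B_{i}^{2}]=\E[d_{\mathrm{Hel}}^{2}(Q_{i},\Pi)]\le\epsilon^{2}$ (the latter from (A3), since $\tau+1=\tau_{\E}(\epsilon)$), and one further Cauchy--Schwarz yields $\E[A_{i}B_{i}]\le 2\sqrt{2}\,GR\epsilon$, so the mixing group contributes at most $2\sqrt{2}\,GRn\epsilon$. Summing the five bounds gives the theorem. The delicate points are exactly the two passages to expectations under $\Pi$ of randomly evaluated squared dual subgradient norms — one handled by Lemma \ref{lem:op:DAJ+12L63}, the other by (A1) at the shifted index — and the bookkeeping ensuring every index range ($i+\tau\le n$, $j<n$, and so on) is legitimate; everything else is routine.
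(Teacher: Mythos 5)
Your proposal is correct and follows essentially the same route as the paper: the decomposition \eqref{eq:SMDDec} with shift $\tau_{\E}\left(\epsilon\right)-1$, the Hellinger/Cauchy--Schwarz control of the mixing block (the content of Lemma \ref{lem:op:DAJ+12L63}), the standard mirror-descent regret bound with (A2) for the $H_{i,n}$ blocks, the $GR$ bound for the tail, and retention of the residuals. Your minor deviations --- re-deriving Lemmas \ref{lem:op:DAJ+12L61}--\ref{lem:op:DAJ+12L62}, using the subgradient at $\theta_{i}$ directly in the shift block instead of the max-telescoping of Lemma \ref{lem:op:DAJ+12L64}, and a truncation/TV-convergence argument in place of the paper's Fatou step for the $\Pi$-moment bound on $\left\|\sfG\right\|_{\ast}^{2}$ --- only affect constants within the stated bound and do not change the argument's substance.
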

The proof is presented in the Appendix.
This upper bound is the same as that in Theorem 3.1 by \citet{DAJ+12} except for the residuals, which immediately disappear if $F\left(\cdot;\xi_{i}\right)=H_{i,n}\left(\cdot\right)$, and the constant factor $\sqrt{2}$ of the second term on the right hand side.
Assumptions (A1) and (A2) on the subgradients are weaker than Assumption A in their study; therefore, this result includes a generalization of that by \citet{DAJ+12} in the sense of achieving the same bound except for the constant factor with a weaker condition.

\section{Simultaneous ergodicity of classes of diffusion processes}
We discuss the simultaneous ergodicity of a family of diffusion processes $X_{t}^{a,b}\left(x\right)$, defined by the following SDE:
\begin{align}
    \diff X_{t}^{a,b}\left(x\right)=b\left(X_{t}^{a,b}\left(x\right)\right)\diff t+a\left(X_{t}^{a,b}\left(x\right)\right)\diff w_{t},\ X_{0}^{a,b}\left(x\right)=x,\ t\ge0,
\end{align}
where $b:\R^{d}\to\R^{d}$ and $a:\R^{d}\to\R^{d}\otimes \R^{d}$ are non-random functions, $x\in\R^{d}$ is a non-random vector, and $w_{t}$ is a $d$-dimensional Wiener process.
The transition kernel is denoted as $P_{t}^{a,b}:\R^{d}\times\mathcal{B}\left(\R^{d}\right)\to\left[0,1\right]$ for all $t>0$. 
For simplicity, we occasionally use the notation $X_{t}^{a,b}=X_{t}^{a,b}\left(x\right)$ when no confusion can arise.

In this section, we illustrate the simultaneous ergodicity and uniform moment bounds of a family of diffusion processes.
They enable us to validate that the risk bounds by Theorem \ref{thm:op:SMDEx} hold uniformly in families with such properties.

Ergodicity is one of the classical topics in the study of diffusion processes \citep[see][]{Ver88,Ver97,PV01,Kul17}.
\citet{GP14} study the simultaneous exponential ergodicity for a class of Markov chains with uniform constants having concise representations and provide a sufficient condition for the class of one-dimensional diffusion processes with such constants.
Notably, \citet{Kul09} also considers the simultaneous ergodicity of a class of SDEs with jump noises.
We discuss such uniform constants with the recent Aronson-type estimates for the transition density functions of multidimensional SDEs \citep{MPZ21} and classical sufficient conditions for ergodicity \citep{Kul17}.
\citet{MPZ21} illustrate the Aronson-type estimates, whose constants are determined by the parameters in the assumptions regarding the drift and diffusion coefficients, the terminal of the estimates, and the dimension of the process.
Demonstrating the exponential ergodicity of a diffusion process based on Aronson-type estimates is not a novel idea \citep[e.g., see][]{Ver21}; however, the estimates by \citet{MPZ21} enable us to show that the convergence of total variation distances is uniform for a class of SDEs satisfying the same assumptions with the uniform constants.

\subsection{Local Dobrushin condition}
For the local Dobrushin condition, we set the following time-homogeneous versions of the conditions in \citet{MPZ21}.
\begin{itemize}
    \item[($H_{\alpha}^{a}$)] There exist constants $\kappa_{0}\ge1$ and $\alpha\in\left(0,1\right]$ such that for all $x,y,\xi\in\R^{d}$
    \begin{align*}
        \kappa_{0}^{-1}\left\|\xi\right\|_{2}^{2}\le \langle a^{\otimes2}\left(x\right)\xi,\xi\rangle \le \kappa_{0}\left\|\xi\right\|_{2}^{2},
    \end{align*}
    and
    \begin{align*}
        \left\|a\left(x\right)-a\left(y\right)\right\|_{F}\le \kappa_{0}\left\|x-y\right\|_{2}^{\alpha}.
    \end{align*}
    \item[($H_{\beta}^{b}$)] $b$ is measurable, and there exist constants $\kappa_{1}>0$ and $\beta\in\left[0,1\right]$ such that for all $x,y\in\R^{d}$,
    \begin{align*}
        \left\|b\left(0\right)\right\|_{2}\le \kappa_{1},\ \left\|b\left(x\right)-b\left(y\right)\right\|_{2}\le \kappa_{1}\left(\left\|x-y\right\|_{2}^{\beta}\vee\left\|x-y\right\|_{2}\right).
    \end{align*}
\end{itemize}

Under ($H_{\alpha}^{a}$) and ($H_{\beta}^{b}$), the SDE has a unique weak solution \citep[see][]{SV79,RW00,BP09,DM10,Men11}.

Let $\rho$ be a nonnegative smooth function with support in the unit ball of $\left(\R^{d},\left\|\cdot\right\|_{2}\right)$ and $\int_{\R^{d}}\rho\left(x\right)\diff x=1$.
Define $\rho_{\epsilon}\left(x\right):=\epsilon^{-d}\rho\left(\epsilon^{-1}x\right)$ for $\epsilon\in\left(0,1\right]$ and $b_{\epsilon}\left(x\right):=b\ast \rho_{\epsilon}\left(x\right)=\int_{\R^{d}}b\left(y\right)\rho_{\epsilon}\left(x-y\right)\diff y$.
The following then holds:
\begin{align}
    \left\|\left\|\nabla_{x} b_{1}\right\|_{2}\right\|_{\infty}:=\sup_{x\in\R^{d}}\left\|\nabla_{x}^{n}b_{1}\left(x\right)\right\|_{2}&\le \kappa_{1}\vol\left(B_{1}\left(\zero\right)\right)\sup_{x:\left\|x\right\|_{2}\le 1}\left\|\nabla_{x}^{n}\rho\left(x\right) \right\|_{2}
\end{align}
\citep[see (1.9) of][]{MPZ21}.
Let $\varphi_{t}^{\left(\epsilon\right)}\left(x\right)$, $t\ge0$ be a deterministic flow $\dot{\varphi}_{t}^{\left(\epsilon\right)}\left(x\right):=b_{\epsilon}(\varphi_{t}^{(\epsilon)}(x))$, $\varphi_{0}^{(\epsilon)}\left(x\right)=x$.

The following Aronson-type estimates for the transition density function of $X_{t}$ hold.

\begin{theorem}[a corollary of Theorem 1.2 by \citealp{MPZ21}]\label{thm:pr:Aronson}
Under ($H_{\alpha}^{a}$) and ($H_{\beta}^{b}$), for any $T>0$, $t\in\left(0,T\right)$ and $x\in\R^{d}$, the unique weak solution $X_{t}^{a,b}\left(x\right)$ admits a density $p_{t}^{a,b}\left(x,y\right)$, which is continuous in $x,y\in\R^{d}$.
Moreover, $p_{t}^{a,b}$ has the following properties:
\begin{enumerate}
    \item[(i)] (Two-sided density bounds) there exist constants $\lambda_{0}\in\left(0,1\right]$ and $C_{0}\ge 1$ depending only on $\left(T,\alpha,\beta,\kappa_{0},\kappa_{1},d\right)$ such that for all $t\in\left(0,T\right)$ and $x,y\in\R^{d}$,
    \begin{align*}
        \frac{1}{C_{0}t^{d/2}}\exp\left(-\frac{\left\|y-\varphi_{t}^{\left(1\right)}\left(x\right)\right\|_{2}^{2}}{\lambda_{0}t}\right)\le p_{t}^{a,b}\left(x,y\right)\le \frac{C_{0}}{t^{d/2}}\exp\left(-\frac{\lambda_{0}\left\|y-\varphi_{t}^{\left(1\right)}\left(x\right)\right\|_{2}^{2}}{t}\right);
    \end{align*}
    \item[(ii)] (Gradient estimate in $x$) there exist constants $\lambda_{1}\in\left(0,1\right]$ and $C_{1}\ge 1$ depending only on $\left(T,\alpha,\beta,\kappa_{0},\kappa_{1},d\right)$ such that for all $t\in\left(0,T\right)$ and $x,y\in\R^{d}$,
    \begin{align*}
        \left\|\nabla_{x}p_{t}^{a,b}\left(x,y\right)\right\|_{2}\le \frac{C_{1}}{t^{\left(d+1\right)/2}}\exp\left(-\frac{\lambda_{1}\left\|y-\varphi_{t}^{\left(1\right)}\left(x\right)\right\|_{2}^{2}}{t}\right).
    \end{align*}
\end{enumerate}
\end{theorem}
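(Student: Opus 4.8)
The plan is to deduce the statement directly from Theorem 1.2 of \citet{MPZ21} by checking that the time-homogeneous conditions $(H_\alpha^a)$ and $(H_\beta^b)$ are a special case of the hypotheses imposed there, and by tracking the dependence of the constants. \citet{MPZ21} work with (possibly time-inhomogeneous) coefficients $a(t,x)$ and $b(t,x)$, so the first step is to specialise to $a(t,x)=a(x)$ and $b(t,x)=b(x)$ and observe that $(H_\alpha^a)$ is precisely their uniform ellipticity together with spatial $\alpha$-H\"{o}lder continuity of the diffusion coefficient with the single constant $\kappa_0$, now constant in $t$.

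Next I would verify the drift hypothesis. From $(H_\beta^b)$, taking $y=0$, one gets the linear growth bound $\left\|b(x)\right\|_2\le\kappa_1\bigl(1+\left\|x\right\|_2^\beta\vee\left\|x\right\|_2\bigr)\le 2\kappa_1\bigl(1+\left\|x\right\|_2\bigr)$, while the increment bound $\left\|b(x)-b(y)\right\|_2\le\kappa_1\bigl(\left\|x-y\right\|_2^\beta\vee\left\|x-y\right\|_2\bigr)$ is exactly the measurable, $\beta$-H\"{o}lder-at-unit-scale / Lipschitz-at-large-scale condition used by \citet{MPZ21}; I would also record the mollification estimate on $b_1$ already displayed above (their (1.9)), which bounds $\left\|\left\|\nabla_x b_1\right\|_2\right\|_\infty$ in terms of $\kappa_1$, $d$, and a universal constant coming from $\rho$. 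Together with well-posedness of the martingale problem under $(H_\alpha^a)$--$(H_\beta^b)$ \citep[see][]{SV79,RW00,BP09,DM10,Men11}, these are all the inputs required.

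Then I would invoke Theorem 1.2 of \citet{MPZ21} to conclude: for each fixed $T>0$ the unique weak solution admits a jointly continuous transition density $p_t^{a,b}(x,y)$ on $\left(0,T\right)\times\R^d\times\R^d$, satisfying the two-sided Gaussian bounds (i) and the gradient estimate (ii), both centred at the mollified-drift flow $\varphi_t^{(1)}$, which in the time-homogeneous case is exactly the autonomous ODE flow $\dot\varphi_t^{(1)}(x)=b_1(\varphi_t^{(1)}(x))$, $\varphi_0^{(1)}(x)=x$, introduced above (their normalisation fixes the mollification parameter to $\epsilon=1$ in the statement). The crucial point to extract, for later use in this section, is that the constants $\lambda_0,C_0$ and $\lambda_1,C_1$ produced by \citet{MPZ21} are explicit functions of $\left(T,\alpha,\beta,\kappa_0,\kappa_1,d\right)$ only and of no finer feature of $a$ or $b$; this is what yields estimates uniform over the whole class of coefficients sharing the same $\kappa_0,\kappa_1,\alpha,\beta$.

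The main obstacle is bookkeeping rather than analysis: one must match conventions carefully --- in particular that \citet{MPZ21} may phrase drift regularity as a $\beta$-H\"{o}lder bound at unit scale plus linear growth, so I would check that our single ``$\vee$''-form increment bound in $(H_\beta^b)$ implies both of these --- and one should confirm that joint continuity of $p_t^{a,b}$ (not merely measurability) is part of their conclusion under these hypotheses, which it is, since the density is built through a parametrix/Duhamel expansion with H\"{o}lder-continuous building blocks. Once the dictionary between the two sets of assumptions is in place, the statement follows with no further computation.
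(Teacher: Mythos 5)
Your proposal is correct and takes essentially the same route as the paper: the paper offers no independent proof, stating the result as a direct specialisation of Theorem 1.2 of \citet{MPZ21} to time-homogeneous coefficients, with ($H_{\alpha}^{a}$) and ($H_{\beta}^{b}$) introduced precisely as time-homogeneous versions of the hypotheses there. Your checks of the drift growth bound, the mollification estimate for $b_{1}$, well-posedness, and the dependence of $\left(C_{0},\lambda_{0},C_{1},\lambda_{1}\right)$ only on $\left(T,\alpha,\beta,\kappa_{0},\kappa_{1},d\right)$ are exactly the bookkeeping that justifies the citation.
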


$C_{j}$ and $\lambda_{j}$ are completely determined by $\left(T,\alpha,\beta,\kappa_{0},\kappa_{1},d\right)$; hence, for SDEs satisfying ($H_{\alpha}^{a}$) and $H_{\beta}^{b}$ for the same parameters, the density estimates are uniform across those models.

\begin{lemma}\label{lem:pr:mollifier}
    Under ($H_{\beta}^{b}$), the following holds:
    \begin{align*}
        \left\|\varphi_{t}^{\left(1\right)}\left(x\right)-x\right\|_{2}\le \kappa_{1}t\left(2+\left\|x\right\|_{2}^{\beta}\vee\left\|x\right\|_{2}\right)\exp\left(\left\|\left\|\nabla_{x} b_{1}\right\|_{2}\right\|_{\infty}t\right).
    \end{align*}
\end{lemma}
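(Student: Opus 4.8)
The plan is to rewrite the ODE defining the flow as an integral equation, bound the drift $b_{1}=b\ast\rho_{1}$ along the trajectory in terms of its value at the base point $x$ and its global Lipschitz constant, and then close the estimate with Gr\"onwall's inequality.

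First I would record two elementary consequences of ($H_{\beta}^{b}$) and the normalisation $\rho_{1}=\rho$. Since $\rho$ is smooth with support in the unit ball of $(\R^{d},\|\cdot\|_{2})$, the mollification estimate displayed above (cf.\ (1.9) of \citet{MPZ21}) gives $L:=\|\|\nabla_{x}b_{1}\|_{2}\|_{\infty}<\infty$; in particular $b_{1}$ is globally $L$-Lipschitz with respect to $\|\cdot\|_{2}$, which guarantees that the flow $\varphi_{t}^{(1)}(x)$ is well defined for every $t\ge0$. Next, ($H_{\beta}^{b}$) yields $\|b(u)\|_{2}\le\|b(0)\|_{2}+\|b(u)-b(0)\|_{2}\le\kappa_{1}\bigl(1+\|u\|_{2}^{\beta}\vee\|u\|_{2}\bigr)$ for all $u\in\R^{d}$. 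Writing $b_{1}(x)=\int_{\R^{d}}b(x-z)\rho(z)\,\diff z$ and using, for $\|z\|_{2}\le1$, the subadditivity $(\|x\|_{2}+1)^{\beta}\le\|x\|_{2}^{\beta}+1$ (valid since $\beta\in[0,1]$) together with the identity $\max(a+1,c+1)=1+\max(a,c)$, I would obtain the pointwise growth bound $\|b_{1}(x)\|_{2}\le\kappa_{1}\bigl(2+\|x\|_{2}^{\beta}\vee\|x\|_{2}\bigr)$.

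The core step is a Gr\"onwall argument. Set $g(t):=\|\varphi_{t}^{(1)}(x)-x\|_{2}$, a continuous nonnegative function with $g(0)=0$. From $\varphi_{t}^{(1)}(x)=x+\int_{0}^{t}b_{1}(\varphi_{s}^{(1)}(x))\,\diff s$ and the triangle inequality, $g(t)\le\int_{0}^{t}\|b_{1}(\varphi_{s}^{(1)}(x))\|_{2}\,\diff s$; bounding the integrand by $\|b_{1}(x)\|_{2}+L\|\varphi_{s}^{(1)}(x)-x\|_{2}=\|b_{1}(x)\|_{2}+Lg(s)$ gives $g(t)\le t\|b_{1}(x)\|_{2}+L\int_{0}^{t}g(s)\,\diff s$. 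Because $t\mapsto t\|b_{1}(x)\|_{2}$ is nondecreasing, Gr\"onwall's inequality produces $g(t)\le t\|b_{1}(x)\|_{2}e^{Lt}$, and substituting the growth bound on $\|b_{1}(x)\|_{2}$ from the previous paragraph gives precisely $\|\varphi_{t}^{(1)}(x)-x\|_{2}\le\kappa_{1}t\bigl(2+\|x\|_{2}^{\beta}\vee\|x\|_{2}\bigr)\exp\bigl(\|\|\nabla_{x}b_{1}\|_{2}\|_{\infty}t\bigr)$.

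I do not expect a real obstacle. The two points that require a little care are the finiteness of $L$ together with global existence of the flow (both following from the Lipschitz property inherited from the mollifier estimate), and obtaining the sharp constant $2$ rather than a larger one in the growth bound for $b_{1}$ — which is exactly where the subadditivity of $t\mapsto t^{\beta}$ and the identity $\max(a+1,c+1)=1+\max(a,c)$ are used.
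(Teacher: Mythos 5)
Your proof is correct and takes essentially the same route as the paper: the integral form of the flow, the growth bound $\|b_{1}(x)\|_{2}\le\kappa_{1}\bigl(2+\|x\|_{2}^{\beta}\vee\|x\|_{2}\bigr)$, the Lipschitz constant $\|\|\nabla_{x}b_{1}\|_{2}\|_{\infty}$ for the deviation term, and Gr\"onwall's inequality. The only (immaterial) difference is that you obtain the growth bound on $b_{1}$ by mollifying $\|b(u)\|_{2}\le\kappa_{1}(1+\|u\|_{2}^{\beta}\vee\|u\|_{2})$ and using subadditivity of $t\mapsto t^{\beta}$, whereas the paper splits $\|b_{1}(x)\|_{2}\le\|b_{1}(x)-b(x)\|_{2}+\|b(x)-b(\zero)\|_{2}+\|b(\zero)\|_{2}$; both yield the same constant.
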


\begin{proof}
A discussion similar to \citet{MPZ21} yields
\begin{align*}
    \left\|\varphi_{t}^{\left(1\right)}\left(x\right)-x\right\|_{2}
    &\le \int_{0}^{t}\left\|b_{1}\left(\varphi_{s}^{\left(1\right)}\left(x\right)\right)\right\|_{2}\diff s\\
    &\le \int_{0}^{t}\left\|b_{1}\left(\varphi_{s}^{\left(1\right)}\left(x\right)\right)-b_{1}\left(x\right)\right\|_{2}\diff s+\int_{0}^{t}\left\|b_{1}\left(x\right)\right\|_{2}\diff s\\
    &\le\int_{0}^{t}\left\|\int_{0}^{1}\left(\nabla_{x} b_{1}\right)\left(x+u\left(\varphi_{s}^{\left(1\right)}\left(x\right)-x\right)\right)\left(\varphi_{s}^{\left(1\right)}\left(x\right)-x\right)\diff u\right\|_{2}\diff s\\
    &\quad+\int_{0}^{t}\left\|b_{1}\left(x\right)-b\left(x\right)\right\|_{2}\diff s+\int_{0}^{t}\left\|b\left(x\right)-b\left(\zero\right)\right\|_{2}\diff s+\int_{0}^{t}\left\|b\left(\zero\right)\right\|_{2}\diff s\\
    &\le \left\|\left\|\nabla_{x} b_{1}\right\|_{2}\right\|_{\infty}\int_{0}^{t}\left\|\varphi_{s}^{\left(1\right)}\left(x\right)-x\right\|_{2}\diff s+\kappa_{1}t+\kappa_{1}\left(\left\|x\right\|_{2}^{\beta}\vee\left\|x\right\|_{2}\right)t+\kappa_{1}t\\
    &\le \kappa_{1}\left(2+\left\|x\right\|_{2}^{\beta}\vee\left\|x\right\|_{2}\right)t+\left\|\left\|\nabla_{x} b_{1}\right\|_{2}\right\|_{\infty}\int_{0}^{t}\left\|\varphi_{s}^{\left(1\right)}\left(x\right)-x\right\|_{2}\diff s.
\end{align*}
Hence, Gronwall's inequality yields the statement.
\end{proof}

We verify the local Dobrushin condition using Theorem \ref{thm:pr:Aronson} and Lemma \ref{lem:pr:mollifier}.
Let us omit the explicit dependence of constants on $\rho$ in the statements because we consider a fixed $\rho$.

\begin{proposition}\label{prp:pr:Dobrushin}
    For fixed $T_{1}, T_{2}>0$ with $T_{1}<T_{2}$ and compact and convex $K\subset \R^{d}$, there exists a constant $\delta>0$ dependent only on $\left(T_{1},T_{2},\alpha,\beta,\kappa_{0},\kappa_{1},d,K\right)$ such that for all $t\in \left(T_{1},T_{2}\right)$, 
    \begin{align*}
        \sup_{x,y\in K}\left\|P_{2t}^{a,b}\left(x,\cdot\right)-P_{2t}^{a,b}\left(y,\cdot\right)\right\|_{\TV}\le 2-\delta.
    \end{align*}
\end{proposition}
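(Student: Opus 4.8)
The plan is to reduce the claim to a uniform minorization of the two-step transition kernels, obtained by a Chapman--Kolmogorov argument fed with the Gaussian lower bound of Theorem~\ref{thm:pr:Aronson}(i). Recall that for two probability measures with Lebesgue densities $p,q$ one has $\left\|P-Q\right\|_{\TV}=2\bigl(1-\int\min(p,q)\,\diff z\bigr)$, so it suffices to exhibit a constant $\delta\in(0,2]$ depending only on $\left(T_{1},T_{2},\alpha,\beta,\kappa_{0},\kappa_{1},d,K\right)$ such that $\int\min\!\bigl(p_{2t}^{a,b}(x,z),p_{2t}^{a,b}(y,z)\bigr)\,\diff z\ge\delta/2$ for all $x,y\in K$ and $t\in(T_{1},T_{2})$, uniformly over all $\left(a,b\right)$ satisfying $\left(H_{\alpha}^{a}\right)$ and $\left(H_{\beta}^{b}\right)$ with the prescribed parameters.

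I would fix once and for all a radius $R>0$ (any choice works; one may let it depend only on $K$) and use the Markov property to write, for $z\in B_{R}(\zero)$,
\[
    p_{2t}^{a,b}(x,z)=\int_{\R^{d}}p_{t}^{a,b}(x,w)\,p_{t}^{a,b}(w,z)\,\diff w\ge\int_{B_{R}(\zero)}p_{t}^{a,b}(x,w)\,p_{t}^{a,b}(w,z)\,\diff w.
\]
To lower-bound the two factors I would control the flow $\varphi_{t}^{(1)}$: the displayed estimate on $\left\|\left\|\nabla_{x}b_{1}\right\|_{2}\right\|_{\infty}$ shows that $b_{1}$ is globally Lipschitz with a constant depending only on $\kappa_{1}$ and the fixed mollifier $\rho$, so $\varphi_{t}^{(1)}$ is globally defined, and Lemma~\ref{lem:pr:mollifier} yields $\sup_{x\in K,\,t\in(T_{1},T_{2})}\left\|\varphi_{t}^{(1)}(x)\right\|_{2}\le M_{K}$ and $\sup_{w\in B_{R}(\zero),\,t\in(T_{1},T_{2})}\left\|\varphi_{t}^{(1)}(w)\right\|_{2}\le M_{R}$, with $M_{K},M_{R}$ depending only on $\left(T_{2},\beta,\kappa_{1},K,R\right)$. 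Plugging these bounds, together with $T_{1}<t<T_{2}$, into the lower bound of Theorem~\ref{thm:pr:Aronson}(i) and recalling that $C_{0},\lambda_{0}$ depend only on $\left(T_{2},\alpha,\beta,\kappa_{0},\kappa_{1},d\right)$, one gets positive constants $c_{1},c_{2}$, depending only on the listed parameters, with $p_{t}^{a,b}(x,w)\ge c_{1}$ for $x\in K,\ w\in B_{R}(\zero)$ and $p_{t}^{a,b}(w,z)\ge c_{2}$ for $w,z\in B_{R}(\zero)$, uniformly in $t\in(T_{1},T_{2})$ and in the admissible $\left(a,b\right)$.

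Combining the two bounds gives $p_{2t}^{a,b}(x,z)\ge c_{1}c_{2}\,\vol\!\left(B_{R}(\zero)\right)$ for every $x\in K$, $z\in B_{R}(\zero)$, $t\in(T_{1},T_{2})$, and likewise with $y$ in place of $x$; hence $\min\!\bigl(p_{2t}^{a,b}(x,z),p_{2t}^{a,b}(y,z)\bigr)\ge c_{1}c_{2}\,\vol\!\left(B_{R}(\zero)\right)$ on $B_{R}(\zero)$, and integrating over $z\in B_{R}(\zero)$ yields $\int\min(\cdot,\cdot)\,\diff z\ge c_{1}c_{2}\,\vol\!\left(B_{R}(\zero)\right)^{2}=:\delta/2$. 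Since $R$ is fixed, $\delta$ depends only on $\left(T_{1},T_{2},\alpha,\beta,\kappa_{0},\kappa_{1},d,K\right)$, and the identity in the first paragraph gives $\left\|P_{2t}^{a,b}(x,\cdot)-P_{2t}^{a,b}(y,\cdot)\right\|_{\TV}\le2-\delta$. The one genuinely delicate point is the bookkeeping in the previous paragraph: making sure that every constant entering $c_{1},c_{2}$ — the Aronson constants $C_{0},\lambda_{0}$, the Lipschitz constant of $b_{1}$, and $M_{K},M_{R}$ from Lemma~\ref{lem:pr:mollifier} — is a function of the stated parameters alone and not of the particular coefficients $\left(a,b\right)$; this is exactly what upgrades the bound to a simultaneous one over the whole class. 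Choosing $R$ is by contrast harmless, since the overlap region need not meet $K$.
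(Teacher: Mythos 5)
Your proof is correct, and it takes a genuinely different route from the paper's at one key point. The paper also passes through the midpoint via Chapman--Kolmogorov, but it splits the work in two: Step 1 uses the gradient estimate of Theorem \ref{thm:pr:Aronson}(ii) to get a Lipschitz-in-total-variation bound $\left\|P_{t}^{a,b}\left(x,\cdot\right)-P_{t}^{a,b}\left(x',\cdot\right)\right\|_{\TV}\le D_{1}\left\|x-x'\right\|_{2}$ on $K$, while the lower bound of Theorem \ref{thm:pr:Aronson}(i) (with Lemma \ref{lem:pr:mollifier}) is used only to show that each $P_{t}^{a,b}\left(x,\cdot\right)$, $x\in K$, charges a small ball $B_{\delta_{2}}\left(\zero\right)$ with mass at least $D_{2}$; combining gives $\delta=\left(2-D_{1}\delta_{2}\right)D_{2}^{2}$. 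You never invoke part (ii): you apply the Gaussian lower bound twice, once from $K$ into $B_{R}\left(\zero\right)$ and once within $B_{R}\left(\zero\right)$, and conclude through the overlap identity $\left\|P-Q\right\|_{\TV}=2\left(1-\int\min\left(p,q\right)\diff z\right)$, which is consistent with the paper's normalization of $\left\|\cdot\right\|_{\TV}$. This is a classical Doeblin minorization: you in fact prove the stronger statement $p_{2t}^{a,b}\left(x,z\right)\ge c_{1}c_{2}\vol\left(B_{R}\left(\zero\right)\right)$ for all $x\in K$, $z\in B_{R}\left(\zero\right)$, uniformly in $t\in\left(T_{1},T_{2}\right)$ and over the class, which immediately implies the local Dobrushin condition used later via Kulik's Harris-type theorems. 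Your route is more elementary (only the two-sided bound is needed) and produces an explicit minorizing measure; the paper's route, following Kulik's Propositions 2.9.1 and 2.9.3, additionally records the quantitative TV-Lipschitz estimate $D_{1}$ in $x$, but that extra information is not needed for the statement. Your uniformity bookkeeping is exactly right: $C_{0},\lambda_{0}$ depend only on $\left(T_{2},\alpha,\beta,\kappa_{0},\kappa_{1},d\right)$, the Lipschitz constant of $b_{1}$ only on $\kappa_{1}$ and the fixed mollifier, Lemma \ref{lem:pr:mollifier} bounds $\varphi_{t}^{\left(1\right)}$ on $K$ and on $B_{R}\left(\zero\right)$ uniformly for $t\le T_{2}$, and in the Gaussian lower bound the constraint $t\ge T_{1}$ controls the exponential factor while $t\le T_{2}$ controls the prefactor, so $\delta$ depends only on the stated parameters (the suppressed dependence on $\rho$ being the same convention as in the paper).
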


\begin{proof}
The proof is similar to that for Propositions 2.9.1 and 2.9.3 by \citet{Kul17}; however, we present the proof to illustrate the construction of $\delta$.

\textbf{(Step 1)}
Owing to Theorem \ref{thm:pr:Aronson}-(ii), the evaluation of the total variation of transition kernels is straightforward.
For all $x,x'\in\R^{d}$,
\begin{align*}
    \left\|P_{t}^{a,b}\left(x,\cdot\right)-P_{t}^{a,b}\left(x',\cdot\right)\right\|_{\TV}
    &= \int_{\R^{d}}\left|p_{t}^{a,b}\left(x,y\right)-p_{t}^{a,b}\left(x',y\right)\right|\diff y.
\end{align*}
Theorem \ref{thm:pr:Aronson}-(ii), Lemma \ref{lem:pr:mollifier}, the convexity of $K$, the fact $\|v_{1}+v_{2}+v_{3}\|_{2}^{2}\le 3(\|v_{1}\|_{2}^{2}+\|v_{2}\|_{2}^{2}+\|v_{3}\|_{2}^{2}),v_{i}\in\R^{d}$, and the mean value theorem yield 
\begin{align*}
    &\int_{\R^{d}}\left|p_{t}^{a,b}\left(x,y\right)-p_{t}^{a,b}\left(x',y\right)\right|\diff y\\
    &\le \left\|x-x'\right\|_{2}\int_{\R^{d}}\left(\max_{x\in K}\left\|\nabla_{x}p_{t}^{a,b}\left(x,y\right)\right\|_{2}\right)\diff y\\
    &\le \frac{C_{1}\left\|x-x'\right\|_{2}}{t^{\left(d+1\right)/2}}\int_{\R^{d}}\max_{x\in K}\exp\left(-\frac{\lambda_{1}\left\|y-\varphi_{t}^{\left(1\right)}\left(x\right)\right\|_{2}^{2}}{t}\right)\diff y\\
    &\le \frac{C_{1}\left\|x-x'\right\|_{2}}{t^{\left(d+1\right)/2}}\int_{\R^{d}}\max_{x\in K}\exp\left(-\frac{\lambda_{1}\left(\left\|y\right\|_{2}^{2}/3-\left\|\varphi_{t}^{\left(1\right)}\left(x\right)-x\right\|_{2}^{2}-\left\|x\right\|_{2}^{2}\right)}{t}\right)\diff y\\
    &\le \left\|x-x'\right\|_{2}D_{1},
\end{align*}
where $D_{1}>0$ is a positive constant dependent only on $\left(T_{1},T_{2},\alpha,\beta,\kappa_{0},\kappa_{1},d,\rho,K\right)$.
Hence, for all $\delta_{1}\in\left(0,2/D_{1}\right)$, $x,x'\in K$ with $\left\|x-x'\right\|_{2}\le \delta_{1}$,
\begin{align*}
    \sup_{x,x'\in K:\left\|x-x'\right\|_{2}\le \delta_{1}}\left\|P_{t}^{a,b}\left(x,\cdot\right)-P_{t}^{a,b}\left(x',\cdot\right)\right\|_{\TV}\le D_{1}\delta_{1} <2.
\end{align*}

\textbf{(Step 2)}
By Theorem \ref{thm:pr:Aronson}-(i) and Lemma \ref{lem:pr:mollifier}, for any $x\in K$ and $\delta_{2}>0$, 
\begin{align*}
    P_{t}^{a,b}\left(x,B_{\delta_{2}}\left(\zero\right)\right)&=\int_{B_{\delta_{2}}\left(\zero\right)}p_{t}^{a,b}\left(x,y\right)\diff y\\
    &\ge \int_{B_{\delta_{2}}\left(\zero\right)}\frac{1}{C_{0}t^{d/2}}\exp\left(-\frac{\left\|y-\varphi_{t}^{\left(1\right)}\left(x\right)\right\|_{2}^{2}}{\lambda_{0}t}\right)\diff y\\
    &\ge \int_{B_{\delta_{2}}\left(\zero\right)}\frac{1}{C_{0}t^{d/2}}\exp\left(-\frac{2\left\|y\right\|_{2}^{2}+2\left\|\varphi_{t}^{\left(1\right)}\left(x\right)\right\|_{2}^{2}}{\lambda_{0}t}\right)\diff y\\
    &\ge \int_{B_{\delta_{2}}\left(\zero\right)}\frac{1}{C_{0}t^{d/2}}\exp\left(-\frac{2\delta_{2}^{2}+2\left\|\varphi_{t}^{\left(1\right)}\left(x\right)\right\|_{2}^{2}}{\lambda_{0}t}\right)\diff y\\
    &\ge D_{2},
\end{align*}
where $D_{2}>0$ is a positive constant dependent only on $\left(T_{1},T_{2},\alpha,\beta,\kappa_{0},\kappa_{1},d,\rho,K,\delta_{2}\right)$.
Let us restrict $\delta_{2}\in\left(0,2/D_{1}\right)$ satisfying $B_{\delta_{2}}\left(\zero\right)\subset K$.
We then have
\begin{align*}
    \left\|P_{2t}^{a,b}\left(x,\cdot\right)-P_{2t}^{a,b}\left(x',\cdot\right)\right\|_{\TV}&=\left\|\int_{\R^{2d}} \left(P_{t}^{a,b}\left(x,\diff y\right)P_{t}^{a,b}\left(y,\cdot\right)-P_{t}^{a,b}\left(x',\diff y'\right)P_{t}^{a,b}\left(y',\cdot\right)\right)\right\|_{\TV}\\
    &=\left\|\int_{\R^{2d}} \left(P_{t}^{a,b}\left(y,\cdot\right)-P_{t}^{a,b}\left(y',\cdot\right)\right)P_{t}^{a,b}\left(x,\diff y\right)P_{t}^{a,b}\left(x',\diff y'\right)\right\|_{\TV}\\
    &\le \int_{\R^{2d}} \left\|P_{t}^{a,b}\left(y,\cdot\right)-P_{t}^{a,b}\left(y',\cdot\right)\right\|_{\TV}P_{t}^{a,b}\left(x,\diff y\right)P_{t}^{a,b}\left(x',\diff y'\right)\\
    &\le 2\int_{\R^{2d}\backslash \left(B_{\delta_{2}}\left(\zero\right)\times B_{\delta_{2}}\left(\zero\right)\right)} P_{t}^{a,b}\left(x,\diff y\right)P_{t}^{a,b}\left(x',\diff y'\right)\\
    &\quad+D_{1}\delta_{2}\int_{B_{\delta_{2}}\left(\zero\right)\times B_{\delta_{2}}\left(\zero\right)}P_{t}^{a,b}\left(x,\diff y\right)P_{t}^{a,b}\left(x',\diff y'\right)\\
    &=2\left(1-P_{t}^{a,b}\left(x,B_{\delta_{2}}\left(\zero\right)\right)P_{t}^{a,b}\left(x',B_{\delta_{2}}\left(\zero\right)\right)\right)\\ 
    &\quad+D_{1}\delta_{2}P_{t}^{a,b}\left(x,B_{\delta_{2}}\left(\zero\right)\right)P_{t}^{a,b}\left(x',B_{\delta_{2}}\left(\zero\right)\right)\\ 
    &=2-\left(2-D_{1}\delta_{2}\right)P_{t}^{a,b}\left(x,B_{\delta_{2}}\left(\zero\right)\right)P_{t}^{a,b}\left(x',B_{\delta_{2}}\left(\zero\right)\right)\\ 
    &\le 2-\left(2-D_{1}\delta_{2}\right)D_{2}^{2}.
\end{align*}
Hence, $\delta:=\left(2-D_{1}\delta_{2}\right)D_{2}^{2}>0$ leads to the statement.
\end{proof}

\subsection{Lyapunov-type condition}
In addition to ($H_{\beta}^{b}$) and ($H_{\alpha}^{a}$), we also set the following drift condition for exponential ergodicity:
\begin{enumerate}
    \item[($L_{\gamma}^{b}$)] There exist constants $\gamma\ge0$ and $\varkappa_{1}>0$ such that for all $x\in\R^{d}$,
    \begin{align*}
        \left\langle b\left(x\right),x\right\rangle \le  -\varkappa_{1}^{-1}\left\|x\right\|_{2}^{1+\gamma}+\varkappa_{1}.
    \end{align*}
\end{enumerate}

We define the operator $\mathcal{L}^{a,b}$ such that for all $f\in\mathcal{C}^{2}\left(\R^{d}\right)$,
\begin{align}
    \mathcal{L}^{a,b}f\left(x\right):=\left\langle b\left(x\right),\partial_{x}f\left(x\right)\right\rangle +\frac{1}{2}\tr\left(a^{\otimes2}\left(x\right)\partial_{x}^{2}f\left(x\right)\right).
\end{align}
Let $\E_{x}^{a,b}$ denote the expectation with respect to the weak solution for fixed $a,b$, and $x$.

\begin{proposition}\label{prp:pr:Lyapunov}
    Under ($H_{\alpha}^{a}$), ($H_{\beta}^{b}$), and ($L_{\gamma}^{b}$), for all $\left(\gamma,\nu\right)\in\R_{+}^{2}$ such that $\gamma=0$ and $\nu\in\left(0,2\varkappa_{1}^{-1}/\kappa_{0}\right)$ or arbitrary $\gamma>0$ and $\nu>0$, there exist positive constants $E_{1},E_{2}>0$ dependent only on $\left(\gamma,\nu,\kappa_{0},\varkappa_{1},d\right)$ such that for any $h>0$ and $x\in\R^{d}$,
    \begin{align*}
        \E_{x}^{a,b}\left[V\left(X_{h}^{a,b}\right)\right]-V\left(x\right)\le -\left(1-e^{-E_{1}h}\right)V\left(x\right)+\frac{E_{2}\left(1-e^{-E_{1}h}\right)}{E_{1}},
    \end{align*}
    where $V:=\exp\left(\nu\sqrt{1+\left\|x\right\|_{2}^{2}}\right)$.
\end{proposition}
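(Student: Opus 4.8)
The plan is to verify a Foster--Lyapunov drift inequality for the function $V(x)=\exp\left(\nu\sqrt{1+\|x\|_{2}^{2}}\right)$ by first computing $\mathcal{L}^{a,b}V$ and showing a pointwise estimate of the form $\mathcal{L}^{a,b}V(x)\le -E_{1}V(x)+E_{2}$ for suitable constants $E_{1},E_{2}>0$ depending only on $(\gamma,\nu,\kappa_{0},\varkappa_{1},d)$, and then passing from the generator inequality to the stated one-step (time-$h$) inequality via Dynkin's formula and Gronwall's inequality. So first I would write $g(x):=\sqrt{1+\|x\|_{2}^{2}}$, compute $\partial_{x}g=x/g$ and $\partial_{x}^{2}g=\frac{1}{g}I_{d}-\frac{1}{g^{3}}x x^{\top}$, and then obtain $\partial_{x}V=\nu V\,\frac{x}{g}$ and $\partial_{x}^{2}V=\nu V\left(\frac{\nu}{g^{2}}xx^{\top}+\frac{1}{g}I_{d}-\frac{1}{g^{3}}xx^{\top}\right)$. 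Substituting into the definition of $\mathcal{L}^{a,b}$ gives
\begin{align*}
    \mathcal{L}^{a,b}V(x)=\nu V\left(\frac{\langle b(x),x\rangle}{g}+\frac{1}{2g}\tr\left(a^{\otimes2}(x)\right)-\frac{\langle a^{\otimes2}(x)x,x\rangle}{2g^{3}}+\frac{\nu\langle a^{\otimes2}(x)x,x\rangle}{2g^{2}}\right).
\end{align*}

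Next I would bound each term. Using $(H_{\alpha}^{a})$, $\tr(a^{\otimes2}(x))\le\kappa_{0}d$, $\langle a^{\otimes2}(x)x,x\rangle\ge\kappa_{0}^{-1}\|x\|_{2}^{2}$ (dropping the negative term with a lower bound) and $\langle a^{\otimes2}(x)x,x\rangle\le\kappa_{0}\|x\|_{2}^{2}$ for the positive $\nu/(2g^{2})$ term; using $(L_{\gamma}^{b})$, $\langle b(x),x\rangle\le -\varkappa_{1}^{-1}\|x\|_{2}^{1+\gamma}+\varkappa_{1}$. Since $\|x\|_{2}^{2}/g^{2}\to1$ and $\|x\|_{2}^{1+\gamma}/g\to\infty$ (for $\gamma>0$) or $\|x\|_{2}/g\to1$ (for $\gamma=0$) as $\|x\|_{2}\to\infty$, the bracket is asymptotically dominated by $-\varkappa_{1}^{-1}\|x\|_{2}^{1+\gamma}/g+\frac{\nu\kappa_{0}}{2}\|x\|_{2}^{2}/g^{2}$. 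In the case $\gamma>0$, the first term wins for any $\nu>0$, so the bracket $\to-\infty$; in the case $\gamma=0$, the bracket tends to $-\varkappa_{1}^{-1}+\frac{\nu\kappa_{0}}{2}$, which is negative precisely under the hypothesis $\nu\in(0,2\varkappa_{1}^{-1}/\kappa_{0})$. In either case one concludes there exist $E_{1}'>0$ (the limiting negative slope, or any positive number below $|{-\infty}|$) and a compact set outside of which $\nu\times(\text{bracket})\le -E_{1}'$; inside that compact set $V$ and $\mathcal{L}^{a,b}V$ are bounded, so absorbing the exceptional region into an additive constant yields $\mathcal{L}^{a,b}V(x)\le -E_{1}V(x)+E_{2}$ globally, with $E_{1},E_{2}$ explicit in $(\gamma,\nu,\kappa_{0},\varkappa_{1},d)$. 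I should be slightly careful that $V\in\mathcal{C}^{2}(\R^{d})$ (it is, since $g$ is smooth and bounded below by $1$) so that $\mathcal{L}^{a,b}V$ is well-defined, and that the weak solution's moments are finite enough to justify Dynkin — one can first prove the estimate for a sequence of stopping times $\tau_{R}=\inf\{t:\|X_{t}\|_{2}\ge R\}$ and let $R\to\infty$ using Fatou, the nonnegativity of $V$, and the fact that $-E_{1}V+E_{2}$ is bounded above.

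Finally, from $\mathcal{L}^{a,b}V\le -E_{1}V+E_{2}$ I would set $u(t):=\E_{x}^{a,b}[V(X_{t}^{a,b})]$ and argue, via Dynkin's formula (with the localization just mentioned), that $u$ satisfies $u(t)\le V(x)+\int_{0}^{t}\left(-E_{1}u(s)+E_{2}\right)\diff s$, i.e. $u'(t)\le -E_{1}u(t)+E_{2}$ in the integral sense; Gronwall's inequality then gives $u(h)\le e^{-E_{1}h}V(x)+\frac{E_{2}}{E_{1}}(1-e^{-E_{1}h})$, which rearranges exactly to the claimed $\E_{x}^{a,b}[V(X_{h}^{a,b})]-V(x)\le -(1-e^{-E_{1}h})V(x)+\frac{E_{2}(1-e^{-E_{1}h})}{E_{1}}$. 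The main obstacle I anticipate is the rigorous justification of Dynkin's formula for the unbounded $V$ together with the case analysis on $\gamma$: one must handle $\gamma=0$ and $\gamma>0$ separately when extracting $E_{1},E_{2}$, keep the constants depending only on the advertised parameters (in particular independent of $h$, which is why the bound is phrased with $1-e^{-E_{1}h}$ rather than $h$), and confirm that the stopping-time localization closes, using that $V\ge 1$ and $-E_{1}V+E_{2}\le E_{2}$ to control the limits. The uniformity in $(a,b)$ within the model class is then immediate because $E_{1}$ and $E_{2}$ depend only on the structural constants $(\kappa_{0},\varkappa_{1},d)$ shared across the class.
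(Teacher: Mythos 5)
Your proposal is correct and follows essentially the same route as the paper: the identical computation of $\partial_{x}V$ and $\partial_{x}^{2}V$, the same bounds on the bracket via ($H_{\alpha}^{a}$) and ($L_{\gamma}^{b}$) with the same case split between $\gamma>0$ (arbitrary $\nu>0$) and $\gamma=0$ (requiring $\nu<2\varkappa_{1}^{-1}/\kappa_{0}$), yielding the pointwise drift inequality $\mathcal{L}^{a,b}V\le -E_{1}V+E_{2}$ with constants depending only on $(\gamma,\nu,\kappa_{0},\varkappa_{1},d)$. The only difference is that the paper concludes the time-$h$ inequality by citing Theorem 3.2.3 of Kulik (2017), whereas you reconstruct that step explicitly via localized Dynkin's formula and an integrating-factor/comparison argument (your appeal to ``Gronwall'' should really be phrased that way, e.g.\ applying Dynkin to $e^{E_{1}t}V$, since the integral Gronwall inequality with a negative kernel is not the standard statement), which is precisely the content of the cited result.
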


\begin{proof}
$V\left(x\right)$ has the following properties:
\begin{align*}
    \partial_{x}V\left(x\right)=\frac{\nu V\left(x\right)}{\sqrt{1+\left\|x\right\|_{2}^{2}}}x,\ \partial_{x}^{2}V\left(x\right)=\frac{\nu^{2} V\left(x\right)}{1+\left\|x\right\|_{2}^{2}}xx^{\top}-\frac{\nu V\left(x\right)}{\left(1+\left\|x\right\|_{2}^{2}\right)^{3/2}}xx^{\top}+\frac{\nu V\left(x\right)}{\sqrt{1+\left\|x\right\|_{2}^{2}}}I_{d},
\end{align*}
and hence,
\begin{align*}
    \generator^{a,b}V&=\left\langle b\left(x\right), \partial_{x}V\left(x\right)\right\rangle + \frac{1}{2}\tr\left(a^{\otimes2}\left(x\right)\partial_{x}^{2}V\left(x\right)\right)\\
    &\le \frac{\nu V\left(x\right)}{\sqrt{1+\left\|x\right\|_{2}^{2}}}\left\langle b\left(x\right), x\right\rangle+\frac{\nu V\left(x\right)}{2}\left(\nu\frac{x^{\top}a^{\otimes2}\left(x\right)x}{1+\left\|x\right\|_{2}^{2}}+\frac{x^{\top}a^{\otimes2}\left(x\right)x}{\left(1+\left\|x\right\|_{2}^{2}\right)^{3/2}}+\frac{\tr a^{\otimes2}\left(x\right)}{\sqrt{1+\left\|x\right\|_{2}^{2}}}\right)\\
    &\le V\left(x\right)\left[\frac{\nu \left(-\varkappa_{1}^{-1}\left\|x\right\|_{2}^{1+\gamma}+\varkappa_{1}\right)}{\sqrt{1+\left\|x\right\|_{2}^{2}}}+\frac{\nu }{2}\left(\nu\kappa_{0}+\frac{\kappa_{0}\left(1+d\right)}{\sqrt{1+\left\|x\right\|_{2}^{2}}}\right)\right].
\end{align*}
For any $R>0$,  
\begin{align*}
    \sup_{x:\left\|x\right\|_{2}\le R}\generator^{a,b}V
    &\le \exp\left(\nu\sqrt{1+R^{2}}\right)\left(\nu\varkappa_{1}+\frac{\nu \kappa_{0}}{2}\left(\nu+1+d\right)\right).
\end{align*}
The assumption that $\gamma=0$ and $\nu\in\left(0,2\varkappa_{1}^{-1}/\kappa_{0}\right)$ or $\gamma>0$ and $\nu>0$ yields the existence of $R_{1}:=R_{1}\left(\gamma,\nu,\kappa_{0},\varkappa_{1},d\right)>0$ such that
\begin{align*}
    &\sup_{x:\left\|x\right\|_{2}\ge R_{1}}\left(\frac{\nu \left(-\varkappa_{1}^{-1}\left\|x\right\|_{2}^{1+\gamma}+\varkappa_{1}\right)}{\sqrt{1+\left\|x\right\|_{2}^{2}}}+\frac{\nu }{2}\left(\nu\kappa_{0}+\frac{\kappa_{0}\left(1+d\right)}{\sqrt{1+\left\|x\right\|_{2}^{2}}}\right)\right)\\
    &=\frac{\nu \left(-\varkappa_{1}^{-1}R_{1}^{1+\gamma}+\varkappa_{1}\right)}{\sqrt{1+R_{1}^{2}}}+\frac{\nu }{2}\left(\nu\kappa_{0}+\frac{\kappa_{0}\left(1+d\right)}{\sqrt{1+R_{1}^{2}}}\right)<0.
\end{align*}
Therefore, by fixing such $R_{1}$,
\begin{align*}
    \generator^{a,b}V\le -E_{1}V\left(x\right)+E_{2},
\end{align*}
where $E_{1},E_{2}>0$ are positive constants dependent only on $\left(\gamma,\nu,\kappa_{0},\varkappa_{1},d\right)$ such that
\begin{align*}
    E_{1}&:=-\frac{\nu \left(-\varkappa_{1}^{-1}R_{1}^{1+\gamma}+\varkappa_{1}\right)}{\sqrt{1+R_{1}^{2}}}-\frac{\nu }{2}\left(\nu\kappa_{0}+\frac{\kappa_{0}\left(1+d\right)}{\sqrt{1+R_{1}^{2}}}\right)\\
    E_{2}&:=\exp\left(\nu\sqrt{1+R_{1}^{2}}\right)\left(\nu \varkappa_{1}+\frac{\nu \kappa_{0}}{2}\left(\nu+1+d\right)\right).
\end{align*}
This inequality and Theorem 3.2.3 by \citet{Kul17} together prove the statement.
\end{proof}

The next corollary follows immediately.
\begin{corollary}\label{cor:pr:Moments}
    Under the same assumptions as Proposition \ref{prp:pr:Lyapunov}, we have
    \begin{align*}
        \sup_{t\ge0}\E_{x}^{a,b}\left[\exp\left(\nu\sqrt{1+\left\|X_{t}^{a,b}\right\|_{2}^{2}}\right)\right]&\le \exp\left(\nu\sqrt{1+\left\|x\right\|_{2}^{2}}\right)+\frac{E_{2}}{E_{1}}.
    \end{align*}
    For any $m\ge0$, we also have
    \begin{align*}
        \sup_{t\ge0}\E_{x}^{a,b}\left[\left\|X_{t}^{a,b}\right\|_{2}^{m}\right]\le \frac{m!}{\nu^{m}}\left(\exp\left(\nu\sqrt{1+\left\|x\right\|_{2}^{2}}\right)+\frac{E_{2}}{E_{1}}\right).
    \end{align*}
\end{corollary}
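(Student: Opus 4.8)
The plan is to derive the first bound directly from Proposition \ref{prp:pr:Lyapunov} by taking expectations and iterating, and then to obtain the moment bound by a standard comparison between polynomial and exponential growth.

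\textbf{First inequality.}
First I would fix $h>0$ and apply Proposition \ref{prp:pr:Lyapunov} with this $h$. Writing $q:=e^{-E_{1}h}\in(0,1)$ and using the Markov property, the proposition gives, for every $s\ge0$,
\begin{align*}
    \E_{x}^{a,b}\left[V\left(X_{s+h}^{a,b}\right)\right]\le q\,\E_{x}^{a,b}\left[V\left(X_{s}^{a,b}\right)\right]+\frac{E_{2}\left(1-q\right)}{E_{1}}.
\end{align*}
Iterating this recursion along the grid $\{kh;k\in\N_{0}\}$ and summing the resulting geometric series yields
\begin{align*}
    \E_{x}^{a,b}\left[V\left(X_{kh}^{a,b}\right)\right]\le q^{k}V\left(x\right)+\frac{E_{2}\left(1-q\right)}{E_{1}}\sum_{j=0}^{k-1}q^{j}\le V\left(x\right)+\frac{E_{2}}{E_{1}},
\end{align*}
where the last step bounds $q^{k}\le1$ and $\sum_{j\ge0}q^{j}=1/(1-q)$. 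Since $h>0$ was arbitrary, every $t\ge0$ can be written as $t=kh$ for a suitable choice of $h$ (e.g.\ $h=t/k$ with $k=1$, or more simply take $h=t$ and $k=1$), so $\sup_{t\ge0}\E_{x}^{a,b}[V(X_{t}^{a,b})]\le V(x)+E_{2}/E_{1}$, which is the first claimed inequality because $V\left(x\right)=\exp\left(\nu\sqrt{1+\left\|x\right\|_{2}^{2}}\right)$.

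\textbf{Second inequality.}
For the moment bound I would use the elementary inequality $r^{m}\le \frac{m!}{\nu^{m}}e^{\nu r}$ valid for all $r\ge0$, $m\ge0$, which follows by retaining only the $m$-th term of the Taylor expansion of $e^{\nu r}=\sum_{k\ge0}(\nu r)^{k}/k!$. Applying it with $r=\|X_{t}^{a,b}\|_{2}$ and then using the trivial bound $\|X_{t}^{a,b}\|_{2}\le\sqrt{1+\|X_{t}^{a,b}\|_{2}^{2}}$ gives
\begin{align*}
    \left\|X_{t}^{a,b}\right\|_{2}^{m}\le \frac{m!}{\nu^{m}}\exp\left(\nu\left\|X_{t}^{a,b}\right\|_{2}\right)\le \frac{m!}{\nu^{m}}\exp\left(\nu\sqrt{1+\left\|X_{t}^{a,b}\right\|_{2}^{2}}\right)=\frac{m!}{\nu^{m}}V\left(X_{t}^{a,b}\right).
\end{align*}
Taking $\E_{x}^{a,b}$, passing to the supremum over $t\ge0$, and invoking the first part of the corollary finishes the proof.

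There is essentially no obstacle here; the only point requiring a moment's care is the passage from the discrete-grid recursion supplied by Proposition \ref{prp:pr:Lyapunov} to the statement for all continuous times $t$, which is handled by noting that the proposition holds for \emph{every} step size $h$, so one may simply take $h=t$ and $k=1$ (or, if one prefers a genuine iteration, $h=t/k$ and let the geometric-series bound absorb the dependence on $k$ uniformly). The constants $E_{1},E_{2}$ depend only on $\left(\gamma,\nu,\kappa_{0},\varkappa_{1},d\right)$ as in Proposition \ref{prp:pr:Lyapunov}, so the resulting bounds are uniform over any class of coefficients $\left(a,b\right)$ satisfying ($H_{\alpha}^{a}$), ($H_{\beta}^{b}$), and ($L_{\gamma}^{b}$) with fixed parameters.
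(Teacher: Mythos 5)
Your proof is correct and is essentially the argument the paper has in mind (the paper gives no proof, calling the corollary immediate from Proposition \ref{prp:pr:Lyapunov}): rearranging the drift inequality with $h=t$ gives $\E_{x}^{a,b}[V(X_{t}^{a,b})]\le e^{-E_{1}t}V(x)+E_{2}(1-e^{-E_{1}t})/E_{1}\le V(x)+E_{2}/E_{1}$ directly, so your grid iteration is a harmless detour that you yourself collapse to the $h=t$, $k=1$ case; the comparison $r^{m}\le (m!/\nu^{m})e^{\nu r}$ then yields the moment bound exactly as you write (for non-integer $m$ one reads $m!=\Gamma(m+1)$ and uses $\sup_{r\ge0}r^{m}e^{-\nu r}=(m/\nu)^{m}e^{-m}\le\Gamma(m+1)/\nu^{m}$ instead of the Taylor term, a cosmetic point).
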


\subsection{Harris-type theorem}

The following exponential ergodicity with uniform constants is an immediate consequence of Theorems 2.6.1 and 2.6.3 and Corollary 2.8.3 by \citet{Kul17}.
\begin{theorem}\label{thm:pr:Harris}
Under the assumptions ($H_{\alpha}^{a}$), ($H_{\beta}^{b}$), and ($L_{\gamma}^{b}$) with $\gamma\ge0$, there exists a unique invariant probability measure $\Pi^{a,b}$ such that for all $t\ge 0$ and $x\in\R^{d}$,
\begin{align*}
    \left\|P_{t}^{a,b}\left(x,\cdot\right)-\Pi^{a,b}\left(\cdot\right)\right\|_{\TV}\le c_{1}\exp\left(-t/c_{2} \right)\left(V\left(x\right)+c_{3}\right),
\end{align*}
where $V\left(x\right):=\exp\left(\nu\sqrt{1+\left\|x\right\|_{2}^{2}}\right)$, and $c_{1},c_{2},c_{3},\nu>0$ are positive constants dependent only on $\left(\alpha,\beta,\gamma,\kappa_{0},\kappa_{1},\varkappa_{1},d\right)$.
\end{theorem}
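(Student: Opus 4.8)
The plan is to obtain the statement from the abstract geometric-ergodicity (Harris-type) theorem of \citet{Kul17} by verifying, with constants depending only on $\left(\alpha,\beta,\gamma,\kappa_{0},\kappa_{1},\varkappa_{1},d\right)$, a geometric drift condition and a local Dobrushin condition for a fixed-time skeleton of $X^{a,b}$, and then transferring the resulting geometric rate from discrete to continuous time. First I would fix the time step $h_{0}:=1$ and an admissible exponent $\nu$ (any $\nu\in\left(0,2\varkappa_{1}^{-1}/\kappa_{0}\right)$ when $\gamma=0$, and $\nu:=1$ when $\gamma>0$, so that $\nu$ depends only on $\left(\gamma,\kappa_{0},\varkappa_{1}\right)$), and consider the Markov chain $\left\{X_{n}^{a,b};n\in\N_{0}\right\}$. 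Proposition \ref{prp:pr:Lyapunov} applied with $h=1$ gives the one-step drift inequality
\begin{align*}
    \E_{x}^{a,b}\left[V\left(X_{1}^{a,b}\right)\right]\le \lambda V\left(x\right)+C,\qquad \lambda:=e^{-E_{1}}\in\left(0,1\right),\quad C:=\frac{E_{2}\left(1-e^{-E_{1}}\right)}{E_{1}},
\end{align*}
with $V\left(x\right)=\exp\left(\nu\sqrt{1+\left\|x\right\|_{2}^{2}}\right)$ and $\lambda,C$ depending only on $\left(\gamma,\nu,\kappa_{0},\varkappa_{1},d\right)$, hence only on $\left(\alpha,\beta,\gamma,\kappa_{0},\kappa_{1},\varkappa_{1},d\right)$. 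Since $V$ is continuous and coercive, every sublevel set $\left\{V\le L\right\}$ is a closed Euclidean ball, in particular compact and convex, and I fix $L$ large enough (depending only on $\lambda$ and $C$, hence uniformly) that $K:=\left\{V\le L\right\}$ plays the role of the ``small set'' required by the abstract theorem.

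Next, with $T_{1}:=1/4$, $T_{2}:=1$, and the compact convex set $K$ just chosen, Proposition \ref{prp:pr:Dobrushin} at $t=1/2$ (so $2t=1=h_{0}$) yields the local Dobrushin condition for the skeleton,
\begin{align*}
    \sup_{x,y\in K}\left\|P_{1}^{a,b}\left(x,\cdot\right)-P_{1}^{a,b}\left(y,\cdot\right)\right\|_{\TV}\le 2-\delta,
\end{align*}
with $\delta>0$ depending only on $\left(T_{1},T_{2},\alpha,\beta,\kappa_{0},\kappa_{1},d,K\right)$, all of which are now uniform. The drift inequality together with this Dobrushin bound on the sublevel set $K=\left\{V\le L\right\}$ are precisely the hypotheses of Theorems 2.6.1 and 2.6.3 and Corollary 2.8.3 of \citet{Kul17}; they furnish a unique invariant probability measure $\Pi^{a,b}$ for $\left\{X_{n}^{a,b}\right\}$ and constants $\tilde{c}_{1},\tilde{c}_{3}>0$ and $\rho\in\left(0,1\right)$, all functions of $\left(\lambda,C,\delta,L\right)$ alone, such that for all $n\in\N_{0}$ and $x\in\R^{d}$,
\begin{align*}
    \left\|P_{n}^{a,b}\left(x,\cdot\right)-\Pi^{a,b}\left(\cdot\right)\right\|_{\TV}\le \tilde{c}_{1}\rho^{n}\left(V\left(x\right)+\tilde{c}_{3}\right).
\end{align*}

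It remains to pass to continuous time. For $t\ge0$ write $t=n+s$ with $n:=\lfloor t\rfloor\in\N_{0}$ and $s\in\left[0,1\right)$. By the semigroup property, $\Pi^{a,b}P_{s}^{a,b}$ is a probability measure invariant under $P_{1}^{a,b}$, so skeleton uniqueness forces $\Pi^{a,b}P_{s}^{a,b}=\Pi^{a,b}$; in particular $\Pi^{a,b}$ is the unique invariant probability measure of the semigroup. Using this identity and that Markov kernels are contractions for $\left\|\cdot\right\|_{\TV}$,
\begin{align*}
    \left\|P_{t}^{a,b}\left(x,\cdot\right)-\Pi^{a,b}\left(\cdot\right)\right\|_{\TV}=\left\|\left(P_{n}^{a,b}\left(x,\cdot\right)-\Pi^{a,b}\left(\cdot\right)\right)P_{s}^{a,b}\right\|_{\TV}\le \tilde{c}_{1}\rho^{n}\left(V\left(x\right)+\tilde{c}_{3}\right),
\end{align*}
and $\rho^{n}=\rho^{t-s}\le \rho^{-1}e^{-t\log\left(1/\rho\right)}$ yields the claim with $c_{1}:=\tilde{c}_{1}/\rho$, $c_{2}:=1/\log\left(1/\rho\right)$, and $c_{3}:=\tilde{c}_{3}$ (enlarging $c_{1}$ if needed to absorb the case $n=0$); these depend only on $\left(\alpha,\beta,\gamma,\kappa_{0},\kappa_{1},\varkappa_{1},d\right)$, and Corollary \ref{cor:pr:Moments} confirms that $\Pi^{a,b}$ has finite $V$-moments so that every term above is finite.

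The main obstacle is uniform bookkeeping rather than any hard estimate: one must check that the ``small set'' level $L$ demanded by the abstract theorem can be chosen from the drift data $\lambda,C$ alone, so that $K=\left\{V\le L\right\}$ --- and therefore the Dobrushin constant $\delta$ of Proposition \ref{prp:pr:Dobrushin} --- is common to the whole family, and that the rate constants $\tilde{c}_{1},\tilde{c}_{3},\rho$ produced by Theorems 2.6.1, 2.6.3, and Corollary 2.8.3 are genuine functions of $\left(\lambda,C,\delta,L\right)$ with no leftover dependence on $a$ or $b$. A secondary point is matching the oscillation bound of Proposition \ref{prp:pr:Dobrushin} to the exact form of the local minorization/Dobrushin hypothesis used in \citet{Kul17}.
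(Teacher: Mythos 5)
Your route is essentially the paper's: verify the Lyapunov drift inequality (Proposition \ref{prp:pr:Lyapunov}) and the local Dobrushin condition on a compact set (Proposition \ref{prp:pr:Dobrushin}, the time window being arbitrary --- the paper takes $T_{1}=1$, $T_{2}=2$ where you take $T_{1}=1/4$, $T_{2}=1$), and feed both into Theorems 2.6.1, 2.6.3 and Corollary 2.8.3 of \citet{Kul17}. Your explicit skeleton construction, choice of the small set as a sublevel set of $V$, and the discrete-to-continuous-time transfer via the TV-contraction of $P_{s}^{a,b}$ are details the paper delegates to Kulik's continuous-time corollary, so they are fine as added bookkeeping rather than a different method.

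The one point where your argument is not yet closed is exactly the item you flag as ``uniform bookkeeping'': the constant replacing $c_{3}$. As the paper's own proof records, what Kulik's results deliver is a bound of the form $c_{1}\exp\left(-t/c_{2}\right)\left(V\left(x\right)+\int V\left(y\right)\Pi^{a,b}\left(\diff y\right)\right)$, i.e.\ the additive constant is the $V$-moment of the (model-dependent) invariant measure, not automatically a function of $\left(\lambda,C,\delta,L\right)$. Your appeal to Corollary \ref{cor:pr:Moments} only to ``confirm finiteness'' does not give the claimed uniform dependence on $\left(\alpha,\beta,\gamma,\kappa_{0},\kappa_{1},\varkappa_{1},d\right)$. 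The paper closes this by proving $\int V\,\diff\Pi^{a,b}\le E_{2}/E_{1}$ uniformly, using $P_{t}^{a,b}\to\Pi^{a,b}$ in total variation, Skorohod representation, Fatou's lemma, and Proposition \ref{prp:pr:Lyapunov}, and then setting $c_{3}=E_{2}/E_{1}$; alternatively, in your skeleton formulation the same uniform bound follows by integrating the one-step drift inequality against $\Pi^{a,b}$ (giving $\int V\,\diff\Pi^{a,b}\le C/\left(1-\lambda\right)$, once integrability is justified, e.g.\ again by Fatou). Adding that step makes your proposal complete and equivalent to the paper's proof.
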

\begin{proof}
It is sufficient to see that the statement holds with $c_{3}=E_{2}/E_{1}$.
From Theorems 2.6.1 and 2.6.3 and Corollary 2.8.3 by \citet{Kul17}, Proposition \ref{prp:pr:Dobrushin} with fixed $T_{1}=1$ and $T_{2}=2$ (this choice is arbitrary) and Proposition \ref{prp:pr:Lyapunov} yield
\begin{align*}
    \left\|P_{t}^{a,b}\left(x,\cdot\right)-\Pi^{a,b}\left(\cdot\right)\right\|_{\TV}\le c_{1}\exp\left(-t/c_{2} \right)\left(V\left(x\right)+\int V\left(y\right)\Pi^{a,b}\left(\diff y\right)\right),
\end{align*}
where $c_{1},c_{2},\nu>0$ are positive constants dependent only on $\left(\alpha,\beta,\gamma,\kappa_{0},\kappa_{1},\varkappa_{1},d\right)$.
It holds that
\begin{align*}
    \int_{\R^{d}}\exp\left(\nu\sqrt{1+\left\|y\right\|_{2}^{2}}\right)\Pi^{a,b}\left(\diff y\right)&\le \liminf_{t\to\infty}\E_{x}^{a,b}\left[\exp\left(\nu\sqrt{1+\left\|X_{t}^{a,b}\right\|_{2}^{2}}\right)\right]\le \frac{E_{2}}{E_{1}}
\end{align*}
by $P_{t}^{a,b}\to\Pi^{a,b}$ in total variation for all $x\in\R^{d}$, Fatou's lemma, Skorohod's representation theorem, and Proposition \ref{prp:pr:Lyapunov}.
\end{proof}

Theorem \ref{thm:pr:Harris} leads to the simultaneous exponential ergodicity of the $d$-dimensional diffusion processes defined by the SDEs satisfying ($H_{\alpha}^{a}$), ($H_{\beta}^{b}$), and ($L_{\gamma}^{b}$) with the same constants $\left(\alpha,\beta,\gamma,\kappa_{0},\kappa_{1},\varkappa_{1}\right)$.

\section{Estimation of stochastic differential equations}

We consider the estimation of the unknown drift coefficient $b:\R^{d}\to\R^{d}$ and diffusion coefficient $a:\R^{d}\to\R^{d}\otimes \R^{d}$ of the following SDE based on discrete observations of $X_{t}$:
\begin{align}
    \diff X_{t}^{a,b}=b\left(X_{t}^{a,b}\right)\diff t + a\left(X_{t}^{a,b}\right)\diff w_{t},\ X_{0}=x,\ t\ge0,\label{eq:SDE}
\end{align}
where $x\in\R^{d}$ is a deterministic initial value and $w_{t}$ is a $d$-dimensional Wiener process.
We do not necessarily aim to estimate the optimal parameters by considering $a$ and $b$ to be included in the statistical model; rather, we consider misspecified modelling and estimate the quasi-optimal parameter \citep{UY11} to know the model closest to $a$ and $b$ in the sense of the $L^{2}$-distances with respect to invariant probability measures.

We apply the discussion in Section 2 to present model-wise risk bounds for parametric estimation via online subgradient descent, which is obtained by setting $\psi(\cdot)=\|\cdot\|_{2}^{2}/2$ and $\left\|\cdot\right\|=\left\|\cdot\right\|_{\ast}=\left\|\cdot\right\|_{2}$, and that in Section 3 to render those upper bounds uniform with respect to SDEs in certain classes.
The notation for the classes of coefficients are as follows: let $\alpha,\gamma,\kappa_{0},\kappa_{1},\varkappa_{1}>0$, $\beta\ge 0$,   $\varpi:=\left(\alpha,\beta,\gamma,\kappa_{0},\kappa_{1},\varkappa_{1}\right)$, and $S_{\varpi}$ be the class of coefficients such that
\begin{align}
    S_{\varpi}:=\left\{\begin{array}{l|l}
    \multirow{2}{*}{$\left(a,b\right)$}& a\text{ satisfies }(H_{\alpha}^{a})\text{ and }b\text{ satisfies }(H_{\beta}^{b})\text{ and }(L_{\gamma}^{b})\\
    &\text{with the same constants }\alpha,\beta,\gamma,\kappa_{0},\kappa_{1},\varkappa_{1}\end{array}\right\}.
\end{align}
We finally obtain the risk bounds that uniformly hold for all $\left(a,b\right)\in S_{\varpi}$ with fixed $\varpi$ by combining Theorems \ref{thm:op:SMDEx} and \ref{thm:pr:Harris} and Corollary \ref{cor:pr:Moments}.


Our estimation is based on discrete observations $\{X_{ih_{n}}^{a,b}\}_{i=0,\ldots,n}$ for a sample size of $n\in\N$ and the discretization step $h_{n}\in\left(0,1\right]$.
We use the notation $\Delta_{i}X=X_{ih_{n}}^{a,b}-X_{\left(i-1\right)h_{n}}^{a,b}$ for all $i=1,\ldots,n$ and $\F^{a,b}=\{\calF_{i}^{a,b};i\in\N_{0}\}$, where $\calF_{i}^{a,b}=\sigma(X_{t}^{a,b};t\le ih_{n})$.
In addition, we write $\E_{x}^{a,b}$, $X_{t}^{a,b}$, $\F^{a,b}$, and $\calF_{i}^{a,b}$ simply as $\E$, $X_{t}$, $\F$, and $\calF_{i}$ in cases wherein no confusion can arise.

\subsection{Estimation of drift coefficients}
In the first place, we consider the estimation of drift coefficients.
\subsubsection{Statistical modelling}
Let $\Xi=\R^{d}$, $\Theta\in\mathcal{B}\left(\R^{p}\right)$ be the compact convex parameter space, $N_{\Theta}$ be an open neighbourhood of $\Theta$, and $R:=\sup\left\{\left\|\theta-\theta'\right\|_{2};\theta,\theta'\in\Theta\right\}$.
The loss function on $\Theta$ is defined with unknown $b$ and a known triple $\left(\bModel,M,J\right)$ of functions with Borel-measurable elements : (1) the parametric model $\bModel:\R^{d}\times N_{\Theta}\to\R^{d}$; (2) the weight function $M:\R^{d}\to\R^{d}\otimes\R^{d}$, which is positive semi-definite for all $\xi\in\R^{d}$; and (3) the regularization term $J:N_{\Theta}\to\R$.

We define a function $F:N_{\Theta}\times \R^{d}\to \R$ such that
\begin{align}
    F\left(\theta;\xi\right)=F^{b}\left(\theta;\xi\right):=\frac{1}{2}M\left(\xi\right)\left[\left(\bModel\left(\xi,\theta\right)-b\left(\xi\right)\right)^{\otimes2}\right]+J\left(\theta\right).
\end{align}
We consider the minimization problem of the following loss function on $\Theta$:
\begin{align}
    f^{a,b}\left(\theta\right):=\int F^{b}\left(\theta;\xi\right)\Pi^{a,b}\left(\diff \xi\right),
\end{align}
where $\Pi^{a,b}$ is the invariant probability measure of $X_{t}^{a,b}$.

Clearly, $F\left(\theta;x\right)$, which depends on the unknown coefficient $b$, is unknown.
Hence, we consider the approximated loss functions based on discrete observations and observe the performance of the estimator given by online gradient descents.

The sampled loss functions are given by the $h_{n}$-skeleton of $X_{t}^{a,b}$:
\begin{align}
    F\left(\theta;\xi_{i}\right)=\frac{1}{2}M\left(\xi_{i}\right)\left[\left(\bModel\left(\xi_{i},\theta\right)-b\left(\xi_{i}\right)\right)^{\otimes2}\right]+J\left(\theta\right),
\end{align}
where $\xi_{i}=X_{\left(i-1\right)h_{n}}^{a,b}$.
$H_{i,n}\left(\theta\right)$, a random function on $\Theta$, should be sufficiently close to $F$; hence, we set
\begin{align}
    H_{i,n}\left(\theta\right)&:=\frac{1}{2h_{n}^{2}}M\left(X_{\left(i-1\right)h_{n}}\right)\left[\left(\Delta_{i}X-h_{n}\bModel\left(X_{\left(i-1\right)h_{n}},\theta\right)\right)^{\otimes2}-\left(\Delta_{i}X-h_{n}b\left(X_{\left(i-1\right)h_{n}}\right)\right)^{\otimes2}\right]\notag\\
    &\qquad+J\left(\theta\right).
\end{align}
A simple computation leads to the equality
\begin{align*}
    &H_{i,n}\left(\theta\right)-F\left(\theta;\xi_{i}\right)\\
    &=\frac{1}{h_{n}}M\left(X_{\left(i-1\right)h_{n}}\right)\left[b\left(X_{\left(i-1\right)h_{n}}\right)-\bModel\left(X_{\left(i-1\right)h_{n}},\theta\right),\Delta_{i}X-h_{n}b\left(X_{\left(i-1\right)h_{n}}\right)\right].
\end{align*}

We set the following assumptions on $F$, $H_{i,n}$, and $\left(\bModel,M,J\right)$:
\begin{itemize}
    \item[(D1)] For all $\xi,\xi'\in\R^{d}$, 
    \begin{align*}
        \frac{1}{2}M\left(\xi\right)\left[\left(\bModel\left(\xi,\theta\right)-\xi'\right)^{\otimes2}\right]+J\left(\theta\right)
    \end{align*}
    is convex with respect to $\theta\in N_{\Theta}$.
     Moreover, there exist positive constants $G>0$ and $\check{K}>0$, a $\mathcal{B}(\R^{d})\otimes\left(\mathcal{B}\left(\R^{p}\right)|_{N_{\Theta}}\right)$-measurable function $\sfG$, and an $\calF_{i}^{a,b}\otimes\left(\mathcal{B}\left(\R^{p}\right)|_{N_{\Theta}}\right)$-measurable random function $\sfK$ such that $\sfG\left(\theta;\xi\right)\in \partial F\left(\theta;\xi\right)$ and $\sfK_{i,n}\left(\theta\right)\in \partial H_{i,n}\left(\theta\right)$ a.s.\ for all $\xi\in\R^{d}$, $\theta\in \Theta$, and $i=1,\ldots,n$, and 
    \begin{align*}
        \sup_{\left(a,b\right)\in S_{\varpi}}\sup_{i\in\N}\E_{x}^{a,b}\left[\left\|\sfG\left(\vartheta_{i};\xi_{i}\right)\right\|_{2}^{2}\right]\le G^{2},\ 
        \sup_{\left(a,b\right)\in S_{\varpi}}\sup_{i=1,\ldots,n}\E_{x}^{a,b}\left[\left\|\sfK_{i,n}\left(\vartheta_{i}\right)\right\|_{2}^{2}\right]\le \frac{\check{K}^{2}}{h_{n}}
    \end{align*}
    for all $n\in\N$ and sequence of $\calF_{i\wedge n-1}^{a,b}$-measurable $\Theta$-valued random variables $\vartheta_{i}$.
    \item[(D2)] There exists a constant $\zeta>0$ such that for all $x\in\R^{d}$ and $\theta\in N_{\Theta}$,
    \begin{align*}
        \left\|\bModel\left(x,\theta\right)\right\|_{2}\le \zeta\left(1+\left\|x\right\|_{2}^{\zeta}\right),\ 
        \left\|M\left(x\right)\right\|_{2}\le \zeta.
    \end{align*}
\end{itemize}
Under (D1), $\theta_{i+1}$ defined by the update rule \eqref{SMDUpdate} as well as $\xi_{i}=X_{\left(i-1\right)h_{n}}$ is $\calF_{i}$-measurable.
(D2) and Corollary \ref{cor:pr:Moments} yield that $f^{a,b}\left(\theta\right)<\infty$ for all $\theta\in N_{\Theta}$.



\subsubsection{Main results} 
For notational simplicity, we sometimes let $f_{i}:=f\left(X_{ih_{n}}\right)$ and $g_{i}\left(\theta\right):=g\left(X_{ih_{n}},\theta\right)$ for all $i=0,\ldots,n$, $\mathcal{B}(\R^{d})$-measurable $f$ and $\mathcal{B}(\R^{d})\otimes\left(\mathcal{B}\left(\R^{p}\right)|_{N_{\Theta}}\right)$-measurable $g$.

The following proposition provides the bound for the residual terms $\resid_{\tau,n}$.

\begin{proposition}\label{prp:st:b:Resid}
    Assume that (D2) holds.
    There exists a constant $c>0$ dependent only on $\left(\beta,\kappa_{0},\kappa_{1},\zeta,d\right)$ such that for any sequence $\left\{\vartheta_{i};i=1,\ldots,n\right\}$ of $\calF_{i-1}$-measurable $\Theta$-valued random variables $\vartheta_{i}$, $\tau\in\N_{0}$, and $n\in\N$,
    \begin{align*}
        \left|\E_{x}^{a,b}\left[\sum_{i=1}^{n-\tau}\left(F\left(\vartheta_{i};\xi_{i+\tau}\right)-H_{i+\tau,n}\left(\vartheta_{i}\right)\right)\right]\right|\le cnh_{n}^{\beta/2}\left(1+\sup_{t\ge0}\E_{x}^{a,b}\left[\left\|X_{t}^{a,b}\right\|_{2}^{c}\right]\right).
    \end{align*}
\end{proposition}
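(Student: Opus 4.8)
The strategy is to estimate each summand by a conditioning argument: conditioning on the past annihilates the stochastic-integral part of the increment $\Delta_{i+\tau}X$, and what remains is the Euler drift discretization error, which is responsible for the factor $h_{n}^{\beta/2}$.

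First I would use the equality for $H_{i,n}-F$ recorded just before the proposition, with the running index shifted from $i$ to $i+\tau$ and $\theta$ replaced by $\vartheta_{i}$, to write
\begin{align*}
    F\left(\vartheta_{i};\xi_{i+\tau}\right)-H_{i+\tau,n}\left(\vartheta_{i}\right)=-\frac{1}{h_{n}}M\left(X_{(i+\tau-1)h_{n}}\right)\left[b\left(X_{(i+\tau-1)h_{n}}\right)-\bModel\left(X_{(i+\tau-1)h_{n}},\vartheta_{i}\right),\ \Delta_{i+\tau}X-h_{n}b\left(X_{(i+\tau-1)h_{n}}\right)\right].
\end{align*}
Since $\vartheta_{i}$ is $\calF_{i-1}^{a,b}$-measurable and $i-1\le i+\tau-1$, every factor other than $\Delta_{i+\tau}X-h_{n}b(X_{(i+\tau-1)h_{n}})$ is $\calF_{i+\tau-1}^{a,b}$-measurable, and because $a$ is bounded by $(H_{\alpha}^{a})$ the stochastic integral in $\Delta_{i+\tau}X-h_{n}b(X_{(i+\tau-1)h_{n}})$ is a true martingale increment. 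Hence applying $\E_{x}^{a,b}[\,\cdot\mid\calF_{i+\tau-1}^{a,b}]$ leaves only the drift remainder $r_{i+\tau}:=\E_{x}^{a,b}[\int_{(i+\tau-1)h_{n}}^{(i+\tau)h_{n}}(b(X_{s}^{a,b})-b(X_{(i+\tau-1)h_{n}}^{a,b}))\diff s\mid\calF_{i+\tau-1}^{a,b}]$ inside the bilinear form, all expectations involved being finite by (D2) and Corollary \ref{cor:pr:Moments}.

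Second I would establish a short-time second-moment bound $\E_{x}^{a,b}[\|X_{s}^{a,b}-X_{t}^{a,b}\|_{2}^{2}\mid\calF_{t}^{a,b}]\le C(s-t)(1+\|X_{t}^{a,b}\|_{2}^{2})$ for $0\le s-t\le 1$, with $C$ depending only on $(\kappa_{0},\kappa_{1},d)$ and hence uniform over $S_{\varpi}$; this follows from the Markov property and a Gronwall argument on $\E_{y}^{a,b}[\|X_{u}^{a,b}-y\|_{2}^{2}]$, using Cauchy--Schwarz, the Burkholder--Davis--Gundy inequality, the linear growth $\|b(x)\|_{2}\le 2\kappa_{1}(1+\|x\|_{2})$ that $(H_{\beta}^{b})$ implies since $\beta\le 1$, and $\tr(a^{\otimes2}(x))\le\kappa_{0}d$ from $(H_{\alpha}^{a})$. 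Combining this with $(H_{\beta}^{b})$, conditional Jensen (for $z\mapsto z^{\beta/2}$), and $h_{n}\le 1$, $\beta\le 1$, one obtains $\E_{x}^{a,b}[\|b(X_{s}^{a,b})-b(X_{(i+\tau-1)h_{n}}^{a,b})\|_{2}\mid\calF_{i+\tau-1}^{a,b}]\le C(s-(i+\tau-1)h_{n})^{\beta/2}(1+\|X_{(i+\tau-1)h_{n}}^{a,b}\|_{2})$, and integrating over $s\in[(i+\tau-1)h_{n},(i+\tau)h_{n}]$ yields $\|r_{i+\tau}\|_{2}\le Ch_{n}^{1+\beta/2}(1+\|X_{(i+\tau-1)h_{n}}^{a,b}\|_{2})$.

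Finally I would substitute this into the conditional expectation, using $\|M(x)\|_{2}\le\zeta$ and $\|\bModel(x,\theta)\|_{2}\le\zeta(1+\|x\|_{2}^{\zeta})$ from (D2), $\|b(x)\|_{2}\le 2\kappa_{1}(1+\|x\|_{2})$, and $|M(x)[u,v]|\le\|M(x)\|_{2}\|u\|_{2}\|v\|_{2}$, to get $|\E_{x}^{a,b}[F(\vartheta_{i};\xi_{i+\tau})-H_{i+\tau,n}(\vartheta_{i})\mid\calF_{i+\tau-1}^{a,b}]|\le Ch_{n}^{\beta/2}(1+\|X_{(i+\tau-1)h_{n}}^{a,b}\|_{2}^{c})$ with $c:=\max\{2,\zeta+1\}$ and $C$ depending only on $(\beta,\kappa_{0},\kappa_{1},\zeta,d)$. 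Taking expectations, dominating $\E_{x}^{a,b}[\|X_{(i+\tau-1)h_{n}}^{a,b}\|_{2}^{c}]$ by $\sup_{t\ge0}\E_{x}^{a,b}[\|X_{t}^{a,b}\|_{2}^{c}]$, and summing the at most $n$ terms gives the asserted inequality. The only genuinely delicate point is the short-time increment estimate, and specifically checking that its constant can be taken uniformly over $S_{\varpi}$; once that is in hand the remaining steps are routine bookkeeping with the growth conditions $(H_{\alpha}^{a})$, $(H_{\beta}^{b})$, and (D2).
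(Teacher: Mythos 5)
Your proposal is correct and follows essentially the same route as the paper: the same identity for $F-H_{i+\tau,n}$, conditioning so that the martingale part of $\Delta_{i+\tau}X$ vanishes, a one-step Euler drift error of order $h_{n}^{1+\beta/2}$ (the paper's Lemma \ref{lem:st:NormEx}, built on Lemma \ref{lem:st:ExNorm}), and then the (D2) growth bounds together with Corollary \ref{cor:pr:Moments}. The only cosmetic difference is that you re-derive the short-time increment estimate with $p=2$ and conditional Jensen rather than citing the appendix lemmas, which changes nothing of substance.
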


\begin{remark}
If $b$ is sufficiently smooth, the upper bound is $\mathcal{O}\left(nh_{n}\right)$ owing to It\^{o}'s formula \citep[see][]{Flo89,Kes97}.
\end{remark}

\begin{proof}
Lemma \ref{lem:st:NormEx} and the Markov property of $X_{t}$ lead to the existence of $c>0$ dependent only on $\left(\beta,\kappa_{0},\kappa_{1},\zeta,d\right)$ such that
\begin{align*}
    &\left|\E\left[\sum_{i=1}^{n-\tau}\left(F\left(\vartheta_{i};\xi_{i+\tau}\right)-H_{i+\tau,n}\left(\vartheta_{i}\right)\right)\right]\right|\\
    &=\left|\frac{1}{h_{n}}\sum_{i=1}^{n-\tau}\E\left[M_{i+\tau-1}\left[\bModel_{i+\tau-1}\left(\vartheta_{i}\right)-b_{i+\tau-1},\Delta_{i+\tau}X-h_{n}b_{i+\tau-1}\right]\right]\right|\\
    &=\left|\frac{1}{h_{n}}\sum_{i=1}^{n-\tau}\E\left[M_{i+\tau-1}\left[\bModel_{i+\tau-1}\left(\vartheta_{i}\right)-b_{i+\tau-1},\E\left[\Delta_{i+\tau}X-h_{n}b_{i+\tau-1}|X_{\left(i+\tau-1\right)h_{n}}\right]\right]\right]\right|\\
    &\le \frac{1}{h_{n}}\sum_{i=1}^{n-\tau}\E\left[\left\|M_{i+\tau-1}\right\|_{2}\left\|\bModel_{i+\tau-1}\left(\vartheta_{i}\right)-b_{i+\tau-1}\right\|_{2}\left\|\E\left[\Delta_{i+\tau}X-h_{n}b_{i+\tau-1}|X_{\left(i+\tau-1\right)h_{n}}\right]\right\|_{2}\right]\\
    &\le ch_{n}^{\beta/2}\sum_{i=1}^{n-\tau}\E\left[\left(1+\left\|X_{\left(i+\tau-1\right)h_{n}}\right\|_{2}^{c}\right)\right]\\
    &\le cnh_{n}^{\beta/2}\left(1+\sup_{t\ge0}\E\left[\left\|X_{t}\right\|_{2}^{c}\right]\right).
\end{align*}
Hence, the statement holds.
\end{proof}

We obtain our main result on the drift estimation using learning rates whose optimality is attributable to \citet{DAJ+12}; we ignore the influence of $G$, $\check{K}$, and $R$.
\begin{theorem}\label{thm:st:b:UnifRiskBound}
    Assume that $h_{n}\in\left(0,1\right]$, $\log nh_{n}^{2}\ge 1$, and (D1)--(D2) hold.
    \begin{enumerate}
        \item[(i)] Under the update rule \eqref{SMDUpdate} with $\eta_{i}:=\eta h_{n}/\sqrt{i\log nh_{n}^{2}}$ and fixed $\eta>0$, for any $x\in\R^{d}$, there exists a positive constant $c>0$ dependent only on $\left(\varpi,\zeta,\eta,G,\check{K},R,d,x\right)$ such that
        \begin{align*}
            \sup_{\left(a,b\right)\in S_{\varpi}}\sup_{\theta\in\Theta}\E_{x}^{a,b}\left[\sum_{i=1}^{n}\left(f^{a,b}\left(\theta_{i}\right)-f^{a,b}\left(\theta\right)\right)\right]\le c\left(\sqrt{\frac{n}{h_{n}^{2}}}\sqrt{\log nh_{n}^{2}}+nh_{n}^{\beta/2}\right).
        \end{align*}
        \item[(ii)] Under the update rule \eqref{SMDUpdate} with $\eta_{i}:=\eta h_{n}/\sqrt{i}$ and fixed $\eta>0$, for any $x\in\R^{d}$, there exists a positive constant $c>0$ dependent only on $\left(\varpi,\zeta,\eta,G,\check{K},R,d,x\right)$ such that
        \begin{align*}
            \sup_{\left(a,b\right)\in S_{\varpi}}\sup_{\theta\in\Theta}\E_{x}^{a,b}\left[\sum_{i=1}^{n}\left(f^{a,b}\left(\theta_{i}\right)-f^{a,b}\left(\theta\right)\right)\right]\le c\left(\sqrt{\frac{n}{h_{n}^{2}}}\log nh_{n}^{2}+nh_{n}^{\beta/2}\right).
        \end{align*}
    \end{enumerate}
\end{theorem}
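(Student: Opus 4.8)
The plan is to instantiate Theorem~\ref{thm:op:SMDEx} with $\psi=\|\cdot\|_{2}^{2}/2$, $\|\cdot\|=\|\cdot\|_{\ast}=\|\cdot\|_{2}$, the latent loss $F=F^{b}$, the surrogates $H_{i,n}$ and the filtration $\F^{a,b}$ fixed above, and then to turn every constant in the resulting bound into one that is uniform over $S_{\varpi}$ by invoking Theorem~\ref{thm:pr:Harris}, Corollary~\ref{cor:pr:Moments} and Proposition~\ref{prp:st:b:Resid}. First I would check (A1)--(A3). Since $\|\cdot\|_{\ast}=\|\cdot\|_{2}$, (A1) and (A2) are exactly the two moment inequalities in (D1), with the constant $G$ as given and $K_{n}^{2}=\check{K}^{2}/h_{n}$; the $\calF_{i}$-measurability of $\theta_{i+1}$ and the $\calF_{i-1}$-measurability of $\theta_{i}$ are guaranteed by (D1), so the SMD update here is the one treated in Section~2.

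The work is in (A3). By the Markov property of $X_{t}^{a,b}$, the conditional law of $\xi_{i+\tau}=X_{(i+\tau-1)h_{n}}^{a,b}$ given $\calF_{i}^{a,b}$ equals $P_{(\tau-1)h_{n}}^{a,b}\!\left(X_{ih_{n}}^{a,b},\cdot\right)$; combining the elementary bound $d_{\mathrm{Hel}}^{2}(P,Q)\le\|P-Q\|_{\TV}$ with the exponential total-variation estimate of Theorem~\ref{thm:pr:Harris}, taking $\E_{x}^{a,b}$, and bounding $\E_{x}^{a,b}[V(X_{ih_{n}}^{a,b})]\le V(x)+E_{2}/E_{1}$ uniformly in $i$ via Corollary~\ref{cor:pr:Moments}, I obtain
\begin{align*}
    \sup_{i\in\N_{0}}\E_{x}^{a,b}\left[d_{\mathrm{Hel}}^{2}\left(P_{[i]|\F}^{i+\tau},\Pi^{a,b}\right)\right]\le c_{1}e^{-(\tau-1)h_{n}/c_{2}}\left(V(x)+2c_{3}\right),
\end{align*}
with $c_{1},c_{2},c_{3}$ from Theorem~\ref{thm:pr:Harris}, all depending only on $\varpi$, $d$ and $x$. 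Hence $\tau_{\E}(\epsilon)-1\le 1+\frac{c_{2}}{h_{n}}\log\!\left(c_{1}(V(x)+2c_{3})/\epsilon^{2}\right)$ once the right-hand logarithm is nonnegative, and choosing $\epsilon:=(nh_{n}^{2})^{-1/2}$ yields $\tau_{\E}(\epsilon)-1\le c\,h_{n}^{-1}\log nh_{n}^{2}$ after using $\log nh_{n}^{2}\ge 1$, with $c$ uniform over $S_{\varpi}$ and depending only on $(\varpi,d,x)$.

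It then remains to read off Theorem~\ref{thm:op:SMDEx} with $\theta'=\theta\in\Theta$ and estimate its six terms. The residual term $\E_{x}^{a,b}\!\left[\resid_{\tau_{\E}(\epsilon)-1,n}(\{\theta_{i}\})-\resid_{\tau_{\E}(\epsilon)-1,n}(\theta)\right]$ is bounded by $2cnh_{n}^{\beta/2}\bigl(1+\sup_{t\ge 0}\E_{x}^{a,b}\|X_{t}^{a,b}\|_{2}^{c}\bigr)$ via Proposition~\ref{prp:st:b:Resid} (applicable because each $\theta_{i}$ is $\calF_{i-1}$-measurable and $\theta$ is constant), and the moment is bounded uniformly over $S_{\varpi}$ by Corollary~\ref{cor:pr:Moments}, so this contributes $O(nh_{n}^{\beta/2})$. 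For the remaining terms, in case (i) the choice $\eta_{i}=\eta h_{n}/\sqrt{i\log nh_{n}^{2}}$ gives $\sum_{i=1}^{n}\eta_{i}\le 2\eta h_{n}\sqrt{n/\log nh_{n}^{2}}$ and $1/\eta_{n}=\sqrt{n\log nh_{n}^{2}}/(\eta h_{n})$, so $(\tau_{\E}(\epsilon)-1)K_{n}^{2}\sum_{i}\eta_{i}$ and $R^{2}/(2\eta_{n})$ are both $O\!\left(h_{n}^{-1}\sqrt{n}\sqrt{\log nh_{n}^{2}}\right)$, while $2\sqrt{2}GRn\epsilon$, $(K_{n}^{2}/2)\sum_{i}\eta_{i}$ and $(\tau_{\E}(\epsilon)-1)GR$ are of lower order because $h_{n}\le 1$ and $\log nh_{n}^{2}\ge 1$; collecting gives $c\bigl(\sqrt{n/h_{n}^{2}}\sqrt{\log nh_{n}^{2}}+nh_{n}^{\beta/2}\bigr)$. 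Case (ii) is the identical computation with $\eta_{i}=\eta h_{n}/\sqrt{i}$, for which $\sum_{i}\eta_{i}\le 2\eta h_{n}\sqrt{n}$ and $1/\eta_{n}=\sqrt{n}/(\eta h_{n})$, so the dominant term becomes $(\tau_{\E}(\epsilon)-1)K_{n}^{2}\sum_{i}\eta_{i}=O\!\left(h_{n}^{-1}\sqrt{n}\log nh_{n}^{2}\right)$, i.e.\ $\sqrt{n/h_{n}^{2}}\log nh_{n}^{2}$. Since every constant collected along the way depends only on $(\varpi,\zeta,\eta,G,\check{K},R,d,x)$, taking $\sup$ over $\theta\in\Theta$ and over $(a,b)\in S_{\varpi}$ is harmless.

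The main obstacle is (A3): one must pass correctly from the conditional law of $\xi_{i+\tau}$ to the diffusion transition kernel and then combine the uniform total-variation rate of Theorem~\ref{thm:pr:Harris} with the uniform moment bound of Corollary~\ref{cor:pr:Moments} to produce a mixing time $\tau_{\E}(\epsilon)$ that is simultaneously uniform over $S_{\varpi}$ and of the correct order $h_{n}^{-1}\log nh_{n}^{2}$. The factor $h_{n}^{-1}$ is what ultimately produces the $h_{n}^{-2}$ inside the square root of the stated rate, and obtaining the logarithm as $\log nh_{n}^{2}$ rather than $\log n$ is precisely why $\epsilon$ must be taken polynomially small in $(nh_{n}^{2})^{-1}$; everything after this is bookkeeping of the terms of Theorem~\ref{thm:op:SMDEx} under $h_{n}\le 1$ and $\log nh_{n}^{2}\ge 1$.
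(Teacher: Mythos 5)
Your proposal is correct and follows essentially the same route as the paper: verify (A1)--(A3) via the Markov property, Theorem \ref{thm:pr:Harris} and Corollary \ref{cor:pr:Moments}, bound the residual with Proposition \ref{prp:st:b:Resid}, and then balance the terms of Theorem \ref{thm:op:SMDEx}. The only (immaterial) difference is bookkeeping: you fix $\epsilon=(nh_{n}^{2})^{-1/2}$ and bound $\tau_{\E}(\epsilon)\lesssim h_{n}^{-1}\log nh_{n}^{2}$ explicitly, whereas the paper keeps $\tau$ free in the intermediate bound and substitutes $\tau=[c(\log nh_{n}^{2})/2h_{n}]$ at the end, which amounts to the same choice.
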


\begin{proof}
We present the proof only for (i) because that for (ii) is parallel.
The Hellinger distance and total variation distance have a property such that for an arbitrary pair of probability measures $P$ and $Q$ defined on the same measurable space,
\begin{align*}
    d_{\rm Hel}^{2}\left(P,Q\right)\le \left\|P-Q\right\|_{\TV}\le 2d_{\rm Hel}\left(P,Q\right).
\end{align*}
Corollary \ref{cor:pr:Moments} and Theorem \ref{thm:pr:Harris} yield that for any $\tau\ge1$, 
\begin{align*}
    \sup_{i\in\N_{0}}\E_{x}^{a,b}\left[d_{\rm Hel}^{2}\left(\left(P^{a,b}_{x}\right)_{\left[i\right]|\F}^{i+\tau},\Pi^{a,b}\right)\right]
    &\le \sup_{i\in\N_{0}}\E_{x}^{a,b}\left[\left.\left\|P_{\left(\tau-1\right)h_{n}}^{a,b}\left(x,\cdot\right)-\Pi^{a,b}\left(\cdot\right)\right\|_{\TV}\right|_{x=X_{ih_{n}}^{a,b}}\right]\\
    &\le \sup_{t\ge0}\E_{x}^{a,b}\left[c_{1}\exp\left(-\left(\tau-1\right)h_{n}/c_{2} \right)\left(V\left(X_{t}^{a,b}\right)+c_{3}\right)\right]\\
    &\le c_{1}'\exp\left(-\tau h_{n}/c_{2} \right)\left(\exp\left(\nu\sqrt{1+\left\|x\right\|_{2}^{2}}\right)+c_{3}'\right),
\end{align*}
where $c_{1}',c_{2},c_{3}'$, and $\nu$ are dependent only on $\left(\varpi,d\right)$.
Theorem \ref{thm:op:SMDEx}, Corollary \ref{cor:pr:Moments}, and Proposition \ref{prp:st:b:Resid} imply the existence of $c>0$ dependent only on $\left(\varpi,\zeta,G,\check{K},R,d,x\right)$ such that for all $\tau$ and $n$,
\begin{align*}
     \sup_{\theta\in\Theta}\E_{x}^{a,b}\left[\sum_{i=1}^{n}\left(f^{a,b}\left(\theta_{i}\right)-f^{a,b}\left(\theta\right)\right)\right]&\le cn\exp\left(-\tau h_{n}/c \right)+\frac{c\tau}{h_{n}}\sum_{i=1}^{n}\eta_{i}+\frac{c}{\eta_{n}} +c\tau +cnh_{n}^{\beta/2}.
\end{align*}
Letting $\tau=\left[c(\log nh_{n}^{2})/2h_{n}\right]$ gives
\begin{align*}
     \sup_{\theta\in\Theta}\E_{x}^{a,b}\left[\sum_{i=1}^{n}\left(f^{a,b}\left(\theta_{i}\right)-f^{a,b}\left(\theta\right)\right)\right]&\le c'\left(\sqrt{\frac{n}{h_{n}^{2}}}+\left(\eta+\eta^{-1}\right)\sqrt{\frac{n\log nh_{n}^{2}}{h_{n}^{2}}}+nh_{n}^{\beta/2}\right),
\end{align*}
where $c'>0$ is a constant dependent only on $\left(\varpi,\zeta,G,\check{K},R,d,x\right)$.
\end{proof}

When we assume a usual identifiability condition \citep[see][]{UY11,Yos11,Yos22a,Yos22b} of the quasi-optimal parameter, that is, the optimal point $\theta^{a,b}_{0}$ of $f^{a,b}$, Theorem \ref{thm:st:b:UnifRiskBound} yields the rate of convergence.

\begin{corollary}\label{cor:st:b:Consist}
Assume that the same assumptions as in Theorem \ref{thm:st:b:UnifRiskBound} hold, the update rule \eqref{SMDUpdate} holds with $\eta_{i}:=\eta h_{n}/\sqrt{i}$ and $\eta>0$, and for all $\left(a,b\right)\in S_{\varpi}$, there exist $\chi^{a,b}>0$ and $\theta_{0}^{a,b}\in\Theta$ such that
\begin{align*}
    \frac{\chi^{a,b}}{2}\left\|\theta-\theta_{0}^{a,b}\right\|_{2}^{2}\le f^{a,b}\left(\theta\right)-f^{a,b}\left(\theta_{0}^{a,b}\right).
\end{align*}
\begin{enumerate}
    \item[(i)] There exists a positive constant $c>0$ dependent only on $\left(\varpi,\zeta,\eta,G,\check{K},R,d,x\right)$ such that
    \begin{align*}
        \sup_{\left(a,b\right)\in S_{\varpi}}\E_{x}^{a,b}\left[\frac{\chi^{a,b}}{2}\left\|\Bar{\theta}_{n}-\theta_{0}^{a,b}\right\|_{2}^{2}\right]&
        \le c\left(\frac{\log nh_{n}^{2}}{\sqrt{nh_{n}^{2}}}+h_{n}^{\beta/2}\right).
    \end{align*}
    \item[(ii)] If $nh_{n}^{2}\to\infty$ and $\sup_{n\in\N}nh_{n}^{2+\beta}<\infty$, then $\Bar{\theta}_{n}-\theta_{0}^{a,b}=\mathcal{O}_{P}\left(\sqrt[4]{\frac{nh_{n}^{2}}{(\log nh_{n}^{2})^{2}}}\right)$.
    \item[(iii)] If $nh_{n}^{2}\to\infty$ and $\sup_{n\in\N}nh_{n}^{2+\beta/4\rho}<\infty$ for some $\rho\in\left(0,1/4\right)$, then $\Bar{\theta}_{n}-\theta_{0}^{a,b}=\mathcal{O}_{P}\left(\left(nh_{n}^{2}\right)^{\rho}\right)$.
\end{enumerate}
\end{corollary}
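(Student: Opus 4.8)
The plan is to obtain all three parts as consequences of Theorem~\ref{thm:st:b:UnifRiskBound}(ii) together with the quadratic growth hypothesis; no further probabilistic estimate is required.

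\textbf{Step 1: from the regret bound to a risk bound for $\Bar\theta_{n}$.} Since $f^{a,b}$ is convex on $N_{\Theta}$ and $\Bar\theta_{n}=\tfrac1n\sum_{i=1}^{n}\theta_{i}$ is a convex combination of elements of the convex set $\Theta$, Jensen's inequality gives $f^{a,b}(\Bar\theta_{n})\le\tfrac1n\sum_{i=1}^{n}f^{a,b}(\theta_{i})$ pointwise on $\Omega$. Subtracting $f^{a,b}(\theta_{0}^{a,b})$, taking $\E_{x}^{a,b}$, invoking Theorem~\ref{thm:st:b:UnifRiskBound}(ii) with $\theta=\theta_{0}^{a,b}$, and dividing by $n$ (using $n^{-1}\sqrt{n/h_{n}^{2}}=1/\sqrt{nh_{n}^{2}}$) yield a constant $c>0$ depending only on $(\varpi,\zeta,\eta,G,\check K,R,d,x)$ such that
\begin{align*}
    \sup_{(a,b)\in S_{\varpi}}\left(\E_{x}^{a,b}\left[f^{a,b}(\Bar\theta_{n})\right]-f^{a,b}(\theta_{0}^{a,b})\right)\le c\left(\frac{\log nh_{n}^{2}}{\sqrt{nh_{n}^{2}}}+h_{n}^{\beta/2}\right).
\end{align*}

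\textbf{Step 2: part (i).} For each fixed $(a,b)\in S_{\varpi}$, take $\theta=\Bar\theta_{n}$ in the growth inequality $\tfrac{\chi^{a,b}}{2}\|\theta-\theta_{0}^{a,b}\|_{2}^{2}\le f^{a,b}(\theta)-f^{a,b}(\theta_{0}^{a,b})$ and apply $\E_{x}^{a,b}$, so that $\E_{x}^{a,b}[\tfrac{\chi^{a,b}}{2}\|\Bar\theta_{n}-\theta_{0}^{a,b}\|_{2}^{2}]\le\E_{x}^{a,b}[f^{a,b}(\Bar\theta_{n})]-f^{a,b}(\theta_{0}^{a,b})$. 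Since the bound in Step~1 does not depend on $(a,b)$, taking the supremum over $S_{\varpi}$ gives (i).

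\textbf{Step 3: parts (ii) and (iii).} Both follow from (i) by Markov's inequality after comparing the two terms on the right. If $\sup_{n}nh_{n}^{2+\beta}<\infty$ then $h_{n}^{\beta/2}\le C(nh_{n}^{2})^{-1/2}$, which, since $\log nh_{n}^{2}\ge1$, is dominated by $(\log nh_{n}^{2})/\sqrt{nh_{n}^{2}}$; hence (i) together with $\chi^{a,b}>0$ gives $\E_{x}^{a,b}[\|\Bar\theta_{n}-\theta_{0}^{a,b}\|_{2}^{2}]\lesssim(\log nh_{n}^{2})/\sqrt{nh_{n}^{2}}$, which tends to $0$ under $nh_{n}^{2}\to\infty$, and Markov's inequality applied to $\|\Bar\theta_{n}-\theta_{0}^{a,b}\|_{2}^{2}$ yields the $\mathcal{O}_{P}$ rate of (ii). If instead $\sup_{n}nh_{n}^{2+\beta/4\rho}<\infty$ for some $\rho\in(0,1/4)$, then $h_{n}^{\beta/2}\le C(nh_{n}^{2})^{-2\rho}$ and, because $\rho<1/4$, also $(\log nh_{n}^{2})/\sqrt{nh_{n}^{2}}=o((nh_{n}^{2})^{-2\rho})$; thus $\E_{x}^{a,b}[\|\Bar\theta_{n}-\theta_{0}^{a,b}\|_{2}^{2}]\lesssim(nh_{n}^{2})^{-2\rho}$ and Markov's inequality gives (iii).

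\textbf{Anticipated difficulty.} There is no substantive obstacle, since all the analytic and probabilistic work is carried by Theorems~\ref{thm:op:SMDEx}, \ref{thm:pr:Harris}, and \ref{thm:st:b:UnifRiskBound} and Corollary~\ref{cor:pr:Moments}. The two points needing mild care are: keeping $\chi^{a,b}$ inside the expectation (it is model-dependent, so no uniform positive lower bound on it is assumed or needed), and the elementary verification that the discretization-bias contribution $h_{n}^{\beta/2}$ is absorbed into $(\log nh_{n}^{2})/\sqrt{nh_{n}^{2}}$, respectively into the polynomial rate, under the hypotheses of (ii) and (iii).
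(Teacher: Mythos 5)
Your proposal is correct and follows essentially the same route as the paper: Jensen's inequality plus Theorem \ref{thm:st:b:UnifRiskBound}(ii) divided by $n$ gives the risk bound for $\Bar{\theta}_{n}$, the quadratic-growth condition (with $\chi^{a,b}$ kept inside the expectation) yields (i), and rescaling by $\sqrt[4]{nh_{n}^{2}/(\log nh_{n}^{2})^{2}}$ resp.\ $(nh_{n}^{2})^{\rho}$ together with Markov's inequality gives (ii) and (iii). The only cosmetic difference is that you absorb the bias term $h_{n}^{\beta/2}$ into the stochastic term before rescaling, whereas the paper rescales first and bounds $(nh_{n}^{2})^{2\rho}h_{n}^{\beta/2}=(nh_{n}^{2+\beta/4\rho})^{2\rho}$ directly; these are equivalent arithmetic manipulations.
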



\begin{proof}
(i) We obtain
\begin{align*}
    \frac{\chi^{a,b}}{2}\left\|\Bar{\theta}_{n}-\theta_{0}^{a,b}\right\|_{2}^{2}\le f\left(\Bar{\theta}_{n}\right)-f\left(\theta_{0}^{a,b}\right)\le \frac{1}{n}\sum_{i=1}^{n}\left(f\left(\theta_{i}\right)-f\left(\theta_{0}^{a,b}\right)\right).
\end{align*}
Hence, the first statement holds immediately.

(ii) The second statement holds because
\begin{align*}
    \sup_{\left(a,b\right)\in S_{\varpi}}\E_{x}^{a,b}\left[\frac{\chi^{a,b}}{2}\left\|\sqrt[4]{\frac{nh_{n}^{2}}{(\log nh_{n}^{2})^{2}}}\left(\Bar{\theta}_{n}-\theta_{0}^{a,b}\right)\right\|_{2}^{2}\right]\le c\left(1+\frac{\sqrt{nh_{n}^{2+\beta}}}{\log nh_{n}^{2}}\right).
\end{align*}

(iii) Similarly,
\begin{align*}
    \sup_{\left(a,b\right)\in S_{\varpi}}\E_{x}^{a,b}\left[\frac{\chi^{a,b}}{2}\left\|\left(nh_{n}^{2}\right)^{\rho}\left(\Bar{\theta}_{n}-\theta_{0}^{a,b}\right)\right\|_{2}^{2}\right]\le c\left(1+\left(nh_{n}^{2+\beta/4\rho}\right)^{2\rho}\right).
\end{align*}
This completes the proof.
\end{proof}

\subsubsection{Online gradient descent with the least-square loss function for drift coefficients}

We consider the least-square-type loss functions \citep[e.g., see][]{Mas05} for SDEs with drift coefficients linear in the parameters.
The target loss function is 
\begin{align}
    f^{a,b}\left(\theta\right)=\int \frac{1}{2}\left\|\bModel\left(\xi,\theta\right)-b\left(\xi\right)\right\|_{2}^{2}\Pi^{a,b}\left(\diff \xi\right).
\end{align}
It corresponds to the case $M\left(x\right)=I_{d}$ and $J\left(\theta\right)=0$ for all $x\in\R^{d}$ and $\theta\in N_{\Theta}$.
Hence, $H_{i,n}\left(\theta\right)$ is given as
\begin{align}
    H_{i,n}\left(\theta\right):=\frac{1}{2h_{n}^{2}}\left\|\Delta_{i}X-h_{n}\bModel\left(X_{\left(i-1\right)h_{n}},\theta\right)\right\|_{2}^{2}-\frac{1}{2h_{n}^{2}}\left\|\Delta_{i}X-h_{n}b\left(X_{\left(i-1\right)h_{n}}\right)\right\|_{2}^{2}.
\end{align}
We set the following assumption:
\begin{itemize}
    \item[(D2')] $\bModel\left(x,\theta\right)$ is in $\mathcal{C}^{1}\left(\R^{d}\times N_{\Theta}\right)$ and each component is linear in $\theta\in N_{\Theta}$ for all $x\in\R^{d}$, and there exists a constant $\zeta>0$ such that for all $x\in\R^{d}$ and $\theta\in N_{\Theta}$,
    \begin{align*}
        \left\|\bModel\left(x,\theta\right)\right\|_{2}\le \zeta\left(1+\left\|x\right\|_{2}^{\zeta}\right),\ 
        \left\|\partial_{\theta}\bModel\left(x,\theta\right)\right\|_{F}\le \zeta\left(1+\left\|x\right\|_{2}^{\zeta}\right).
    \end{align*}
\end{itemize}
Under (D2'), $H_{i,n}\left(\theta\right)$ is a.s.\ convex in $\theta$ and its gradient is given as
\begin{align}
    \sfK:=-\frac{1}{h_{n}}\left(\partial_{\theta}\bModel\right)\left(X_{\left(i-1\right)h_{n}},\theta\right)\left(\Delta_{i}X-h_{n}\bModel\left(X_{\left(i-1\right)h_{n}},\theta\right)\right).
\end{align}

By choosing $\eta_{i}=\eta h_{n}/\sqrt{i}$ with $\eta>0$, we obtain a simple update rule of the online gradient descent equivalent to \eqref{SMDUpdate}:
\begin{align}
    \theta_{i+1}:=\Proj_{\Theta}\left(\theta_{i}+\frac{\eta}{\sqrt{i}}\left(\partial_{\theta}\bModel\right)^{\top}\left(X_{\left(i-1\right)h_{n}},\theta_{i}\right)\left(\Delta_{i}X-h_{n}\bModel\left(X_{\left(i-1\right)h_{n}},\theta_{i}\right)\right)\right).\label{eq:OGDDriftUpdate}
\end{align}

\begin{lemma}\label{lem:st:b:ls:MomGrad}
    Assume that (D2') holds.
    There exists a constant $\check{K}>0$ dependent only on $\left(\varpi,\zeta,d,x\right)$ such that for all $i=1,\ldots,n$, $n\in\N$, $\calF_{i-1}$-measurable $\Theta$-valued random variables $\vartheta_{i}$,
    \begin{align*}
        \E_{x}^{a,b}\left[\left\|\frac{1}{h_{n}}\left(\partial_{\theta}\bModel_{i-1}\left(\vartheta_{i}\right)\right)^{\top}\left(\Delta_{i}X-h_{n}\bModel_{i-1}\left(\vartheta_{i}\right)\right)\right\|_{2}^{2}\right]\le \frac{\check{K}^{2}}{h_{n}}.
    \end{align*}
\end{lemma}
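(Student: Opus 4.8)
The plan is to bound the squared norm by splitting the increment $\Delta_i X$ into its drift part and its martingale part, and to control each factor by the moment bounds from Corollary~\ref{cor:pr:Moments} together with the linear-growth estimates in (D2'). First I would write $\Delta_i X = \int_{(i-1)h_n}^{ih_n} b(X_s^{a,b})\,\diff s + \int_{(i-1)h_n}^{ih_n} a(X_s^{a,b})\,\diff w_s$, so that
\begin{align*}
    \Delta_i X - h_n\bModel_{i-1}(\vartheta_i) &= \int_{(i-1)h_n}^{ih_n}\left(b(X_s^{a,b})-b(X_{(i-1)h_n}^{a,b})\right)\diff s + h_n\left(b(X_{(i-1)h_n}^{a,b})-\bModel_{i-1}(\vartheta_i)\right)\\
    &\quad + \int_{(i-1)h_n}^{ih_n} a(X_s^{a,b})\,\diff w_s.
\end{align*}
Using $\|v_1+v_2+v_3\|_2^2 \le 3(\|v_1\|_2^2+\|v_2\|_2^2+\|v_3\|_2^2)$ and the fact that $\|\partial_\theta\bModel_{i-1}(\vartheta_i)\|_F \le \zeta(1+\|X_{(i-1)h_n}^{a,b}\|_2^\zeta)$ is $\calF_{i-1}$-measurable, I would pull the gradient factor out via Cauchy--Schwarz in expectation, reducing the problem to bounding $\E_x^{a,b}[(1+\|X_{(i-1)h_n}^{a,b}\|_2^\zeta)^2 \cdot (\text{each of the three squared terms})]$.

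The three contributions are then estimated separately. For the martingale term, Itô's isometry (applied conditionally on $\calF_{i-1}$, then taking expectations) together with the bound $\|a^{\otimes2}(x)\|_2 \le \kappa_0$ from ($H_\alpha^a$) gives $\E_x^{a,b}[\|\int a(X_s^{a,b})\,\diff w_s\|_2^2 \mid \calF_{i-1}] \le d\kappa_0 h_n$; this is the term responsible for the $\check K^2/h_n$ order after dividing by $h_n^2$ and multiplying by $h_n^2$ in the prefactor---more precisely, $h_n^{-2}\cdot h_n = h_n^{-1}$. For the drift-increment term, ($H_\beta^b$) gives $\|b(X_s^{a,b})-b(X_{(i-1)h_n}^{a,b})\|_2 \le \kappa_1(\|X_s^{a,b}-X_{(i-1)h_n}^{a,b}\|_2^\beta \vee \|X_s^{a,b}-X_{(i-1)h_n}^{a,b}\|_2)$, and Jensen plus a standard moment estimate for $\E_x^{a,b}[\|X_s^{a,b}-X_{(i-1)h_n}^{a,b}\|_2^m]$ over an interval of length $\le h_n$ (itself a consequence of the SDE, the growth of $b$, and Corollary~\ref{cor:pr:Moments}) shows this term is $o(h_n)$ in squared norm, hence negligible against the martingale term after division by $h_n^2$. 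For the $h_n(b_{i-1}-\bModel_{i-1}(\vartheta_i))$ term, the squared norm is $h_n^2$ times something of polynomial growth in $\|X_{(i-1)h_n}^{a,b}\|_2$, so after dividing by $h_n^2$ it contributes $\mathcal O(1)$, which is dominated by $\check K^2/h_n$ since $h_n \in (0,1]$. Throughout, all expectations of polynomial functions of $\|X_t^{a,b}\|_2$ are bounded uniformly in $t \ge 0$ and in $(a,b) \in S_\varpi$ by Corollary~\ref{cor:pr:Moments}, which is what makes $\check K$ depend only on $(\varpi,\zeta,d,x)$.

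The main obstacle is the bookkeeping around the moment estimate for the increment $\E_x^{a,b}[\|X_s^{a,b}-X_{(i-1)h_n}^{a,b}\|_2^m]$: one needs a clean statement that this is $\mathcal O(h_n)$ (or $h_n^{m/2}$ for even $m$) with a constant uniform over $S_\varpi$ and over the starting time, which requires combining the linear growth of $b$ coming from ($H_\beta^b$) with the uniform moment bound of Corollary~\ref{cor:pr:Moments} and a Gronwall or BDG-type argument. I expect this auxiliary increment estimate to be packaged as a separate lemma (the ``Lemma~\ref{lem:st:NormEx}'' referenced in the proof of Proposition~\ref{prp:st:b:Resid}), and the proof of the present lemma would simply cite it. The remaining steps---the triple-split, Cauchy--Schwarz to detach the $\calF_{i-1}$-measurable gradient factor, Itô's isometry for the martingale part, and collecting powers of $h_n$ using $h_n \le 1$---are routine. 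One subtlety worth stating carefully is that $\vartheta_i$ is $\calF_{i-1}$-measurable and hence independent of the future Wiener increments on $[(i-1)h_n, ih_n]$, so the cross term between $h_n(b_{i-1}-\bModel_{i-1}(\vartheta_i))$ and the stochastic integral vanishes in conditional expectation; but since we only want an upper bound on the squared norm via the $3(\cdot)$ inequality, we do not even need this orthogonality---it is only a sanity check that no term is being over- or under-counted.
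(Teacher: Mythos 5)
Your argument is correct and essentially reproduces the paper's proof: the paper splits $\Delta_iX-h_n\bModel_{i-1}(\vartheta_i)$ into just $\Delta_iX$ and $h_n\bModel_{i-1}(\vartheta_i)$, detaches the $\calF_{i-1}$-measurable gradient factor via (D2'), and invokes the packaged increment bound $\E_{x}^{a,b}\left[\left\|\Delta_iX\right\|_2^2\mid X_{(i-1)h_n}\right]\le c\,h_n\left(1+\left\|X_{(i-1)h_n}\right\|_2^2\right)$ together with Corollary~\ref{cor:pr:Moments}, which is exactly the BDG/It\^{o}-isometry content your three-way drift/model/martingale decomposition reproves inline. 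The auxiliary increment estimate you anticipate does exist in the appendix, namely Lemma~\ref{lem:st:ExNorm} (not Lemma~\ref{lem:st:NormEx}, which is the conditional-bias bound used in Proposition~\ref{prp:st:b:Resid}), and the paper's proof simply cites it.
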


\begin{proof}
There exist constants $c,c'>0$ dependent only on $\left(\beta,\kappa_{0},\kappa_{1},\zeta,d\right)$ such that 
\begin{align*}
        &\E_{x}^{a,b}\left[\left\|\frac{1}{h_{n}}\left(\partial_{\theta}\bModel_{i-1}\left(\vartheta_{i}\right)\right)^{\top}\left(\Delta_{i}X-h_{n}\bModel_{i-1}\left(\vartheta_{i}\right)\right)\right\|_{2}^{2}\right]\\
        &\le \frac{1}{h_{n}^{2}}\E_{x}^{a,b}\left[\left\|\partial_{\theta}\bModel_{i-1}\left(\vartheta_{i}\right)\right\|_{F}^{2}\left\|\Delta_{i}X-h_{n}\bModel_{i-1}\left(\vartheta_{i}\right)\right\|_{2}^{2}\right]\\
        &\le \frac{2}{h_{n}^{2}}\E_{x}^{a,b}\left[\left\|\partial_{\theta}\bModel_{i-1}\left(\vartheta_{i}\right)\right\|_{F}^{2}\left\|\Delta_{i}X\right\|_{2}^{2}\right]+2\E_{x}^{a,b}\left[\left\|\partial_{\theta}\bModel_{i-1}\left(\vartheta_{i}\right)\right\|_{F}^{2}\left\|\bModel_{i-1}\left(\vartheta_{i}\right)\right\|_{2}^{2}\right]\\
        &\le \frac{1}{h_{n}}\E_{x}^{a,b}\left[c'\left(1+\left\|X_{\left(i-1\right)h_{n}}\right\|_{2}^{c'}\right)\right],
    \end{align*}
    by Lemma \ref{lem:st:ExNorm}.
    The proof is completed using this inequality and Corollary \ref{cor:pr:Moments}.
\end{proof}

The existence of $G$ depending only on $\left(\varpi,\zeta,d,x\right)$ in (D1) is more obvious.
Hence, we obtain the following simple but useful corollary:

\begin{corollary}\label{cor:st:b:ls:OGD}
Under $h_{n}\in\left(0,1\right]$, $\log nh_{n}^{2}\ge 1$, the update rule \eqref{SMDUpdate} with $\eta_{i}=\eta h_{n}/\sqrt{i}$ and $\eta>0$, (D1) and (D2'), $\Bar{\theta}_{n}:=\frac{1}{n}\sum_{i=1}^{n}\theta_{i}$ has the uniform risk bound such that
    \begin{align*}
        \sup_{\left(a,b\right)\in S_{\varpi}}\sup_{\theta\in\Theta}\E_{x}^{a,b}\left[f^{a,b}\left(\Bar{\theta}_{n}\right)-f^{a,b}\left(\theta\right)\right]\le c\left(\frac{\log nh_{n}^{2}}{\sqrt{nh_{n}^{2}}}+h_{n}^{\beta/2}\right),
    \end{align*}
    where $c>0$ is a constant dependent only on $\left(\varpi,\zeta,\eta,R,d,x\right)$.
\end{corollary}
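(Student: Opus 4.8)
The plan is to obtain this corollary as a direct specialization of Theorem \ref{thm:st:b:UnifRiskBound}(ii): first I would check that the least-square setup of this subsection satisfies the hypotheses (D1) and (D2) with constants controlled only by $\left(\varpi,\zeta,d,x\right)$, then apply Theorem \ref{thm:st:b:UnifRiskBound}(ii) to bound the cumulative quantity $\sum_{i=1}^{n}\left(f^{a,b}\left(\theta_{i}\right)-f^{a,b}\left(\theta\right)\right)$, and finally pass to the averaged iterate $\bar{\theta}_{n}$ using convexity of $f^{a,b}$.

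For the hypothesis check: since $M\equiv I_{d}$ and $J\equiv 0$, (D2') gives the linear growth of $\bModel$ and $\left\|M\left(x\right)\right\|_{2}=1\le\zeta$, so (D2) holds; and linearity of $\bModel\left(x,\cdot\right)$ makes $\theta\mapsto\frac{1}{2}\left\|\bModel\left(\xi,\theta\right)-\xi'\right\|_{2}^{2}$ a convex quadratic, yielding the convexity part of (D1) and the a.s.\ convexity of $H_{i,n}$, whose subgradient $\sfK$ was computed above. Lemma \ref{lem:st:b:ls:MomGrad} then supplies $\E_{x}^{a,b}\left[\left\|\sfK_{i,n}\left(\vartheta_{i}\right)\right\|_{2}^{2}\right]\le\check{K}^{2}/h_{n}$ with $\check{K}$ depending only on $\left(\varpi,\zeta,d,x\right)$. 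For the remaining estimate in (D1) I would write $\sfG\left(\theta;\xi\right)=\left(\partial_{\theta}\bModel\right)\left(\xi,\theta\right)^{\top}\left(\bModel\left(\xi,\theta\right)-b\left(\xi\right)\right)$, bound its squared norm by a polynomial in $\left\|\xi\right\|_{2}$ using $\left(H_{\beta}^{b}\right)$ and the growth bounds in (D2'), and take $\xi_{i}=X_{\left(i-1\right)h_{n}}^{a,b}$; the uniform-in-$\left(a,b\right)$ moment bound of Corollary \ref{cor:pr:Moments} then gives $\sup_{\left(a,b\right)\in S_{\varpi}}\sup_{i}\E_{x}^{a,b}\left[\left\|\sfG\left(\vartheta_{i};\xi_{i}\right)\right\|_{2}^{2}\right]\le G^{2}$ with $G$ depending only on $\left(\varpi,\zeta,d,x\right)$, which is the estimate announced as ``more obvious'' before the statement.

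With (D1) and (D2) established, Theorem \ref{thm:st:b:UnifRiskBound}(ii) applies; its constant depends on $\left(\varpi,\zeta,\eta,G,\check{K},R,d,x\right)$, hence, after absorbing the $G$ and $\check{K}$ just constructed, only on $\left(\varpi,\zeta,\eta,R,d,x\right)$, and reads
\begin{align*}
\sup_{\left(a,b\right)\in S_{\varpi}}\sup_{\theta\in\Theta}\E_{x}^{a,b}\left[\sum_{i=1}^{n}\left(f^{a,b}\left(\theta_{i}\right)-f^{a,b}\left(\theta\right)\right)\right]\le c\left(\sqrt{\frac{n}{h_{n}^{2}}}\log nh_{n}^{2}+nh_{n}^{\beta/2}\right).
\end{align*}
Since $f^{a,b}$ is an integral of the convex maps $\theta\mapsto F^{b}\left(\theta;\xi\right)$, it is convex, so Jensen's inequality gives $f^{a,b}\left(\bar{\theta}_{n}\right)\le\frac{1}{n}\sum_{i=1}^{n}f^{a,b}\left(\theta_{i}\right)$; dividing the displayed bound by $n$ then yields
\begin{align*}
\sup_{\left(a,b\right)\in S_{\varpi}}\sup_{\theta\in\Theta}\E_{x}^{a,b}\left[f^{a,b}\left(\bar{\theta}_{n}\right)-f^{a,b}\left(\theta\right)\right]\le c\left(\frac{\log nh_{n}^{2}}{\sqrt{nh_{n}^{2}}}+h_{n}^{\beta/2}\right),
\end{align*}
as claimed. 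There is essentially no obstacle internal to this corollary: once Theorem \ref{thm:st:b:UnifRiskBound} and Lemma \ref{lem:st:b:ls:MomGrad} are in hand, the argument is bookkeeping plus Jensen. The only point requiring care is confirming that every constant is genuinely uniform over $S_{\varpi}$ — in particular that the implicit $G$ and $\check{K}$ do not depend on the individual pair $\left(a,b\right)$ — which is exactly what the uniform moment bounds of Corollary \ref{cor:pr:Moments} and the simultaneous ergodicity of Theorem \ref{thm:pr:Harris}, both already folded into Theorem \ref{thm:st:b:UnifRiskBound}, guarantee.
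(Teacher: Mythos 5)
Your proposal is correct and follows exactly the route the paper intends (the paper leaves this corollary without a written proof, remarking only that the required $G$ is "more obvious"): verify (D1)--(D2) in the least-square setting via (D2') and Lemma \ref{lem:st:b:ls:MomGrad}, invoke Theorem \ref{thm:st:b:UnifRiskBound}(ii) with constants uniform in $\left(a,b\right)\in S_{\varpi}$, then use convexity of $f^{a,b}$ and divide by $n$. No gaps.
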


If the parameter space $\Theta$ is simple enough (e.g., rectangular, circular) that the projection on $\Theta$ is $\mathcal{O}\left(p\right)$, then the total computational complexity is $\mathcal{O}\left(np\right)$.
Such a complexity can be achieved by other gradient-based optimization algorithms such as the Stark--Parker algorithm; however, the estimation via online gradient descent achieves smaller computation costs in repetitive calling.
For any $m\in\N$ such that $m\le n$, the sequence of online estimators $\Bar{\theta}_{\left[n/m\right]},\ldots,\Bar{\theta}_{m\left[n/m\right]}$ can be composed with the complexity $\mathcal{O}\left(np\right)$ independent of $m$, whereas those of the other batch algorithms can be $\mathcal{O}\left(mnp\right)$.

\subsection{Estimation of diffusion coefficients}
We can estimate diffusion coefficients through arguments similar to drift ones.
\subsubsection{Statistical modelling}
Let $\Xi=\R^{d}$, $\Theta\in\mathcal{B}\left(\R^{p}\right)$ be the compact convex parameter space, $N_{\Theta}$ be an open neighbourhood of $\Theta$, and $R:=\sup\left\{\left\|\theta-\theta'\right\|_{2};\theta,\theta'\in\Theta\right\}$.
The discussion is much simpler than that of the drift estimation: we consider a known triple $(\AModel,M,J)$ of functions with Borel-measurable elements such that $\AModel:\R^{d}\times N_{\Theta}\to\R^{d}\otimes\R^{d}$ is the parametric model, $M:\R^{d}\to\R^{d}\otimes\R^{d}$ is the positive semi-definite matrix-valued weight function, and $J:N_{\Theta}\to\R$ is the regularization term.
Let $F\left(\theta;\xi\right)$ be a real-valued function defined as
\begin{align}
    F\left(\theta;\xi\right)=F^{a}\left(\theta;\xi\right):=\frac{1}{2}\left\|M^{1/2}\left(\xi\right)\left(A\left(\xi\right)-\AModel\left(\xi,\theta\right)\right)M^{1/2}\left(\xi\right)\right\|_{F}^{2}+J\left(\theta\right),
\end{align}
where $A\left(\xi\right):=a^{\otimes2}\left(\xi\right)$.
Let us consider the minimization problem of $f^{a,b}$ defined as
\begin{align}
    f^{a,b}\left(\theta\right):=\int F^{a}\left(\theta;\xi\right)\Pi^{a,b}\left(\diff \xi\right)
\end{align}
on $\Theta$.
We provide the sequence of the sampled loss functions $F\left(\theta;\xi_{i}\right)$ with $\xi_{i}:=X_{\left(i-1\right)h_{n}}^{a,b}$ and its approximations $H_{i,n}$ such that
\begin{align}
    H_{i,n}\left(\theta\right)
    &:=\frac{1}{2}\left\|M_{i-1}^{1/2}\left(\frac{1}{h_{n}}\left(\Delta_{i}X\right)^{\otimes2}-\AModel_{i-1}\left(\theta\right)\right)M_{i-1}^{1/2}\right\|_{F}^{2}\notag\\
    &\quad-\frac{1}{2}\left\|M_{i-1}^{1/2}\left(\frac{1}{h_{n}}\left(\Delta_{i}X\right)^{\otimes2}-A_{i-1}\right)M_{i-1}^{1/2}\right\|_{F}^{2}+J\left(\theta\right)\notag\\
    &=\left(A_{i-1}-\AModel_{i-1}\left(\theta\right)\right)\left[M_{i-1}\left(\frac{1}{h_{n}}\left(\Delta_{i}X\right)^{\otimes2}-A_{i-1}\right)M_{i-1}\right]+F\left(\theta;\xi_{i}\right).
\end{align}

We present some assumptions.
\begin{itemize}
    \item[(V1)] For all $\xi,\xi'\in\R^{d}$, 
    \begin{align*}
        \frac{1}{2}\left\|M^{1/2}\left(\xi\right)\left(\left(\xi'\right)^{\otimes2}-\AModel\left(\xi,\theta\right)\right)M^{1/2}\left(\xi\right)\right\|_{F}^{2}+J\left(\theta\right)
    \end{align*}
    is convex with respect to $\theta\in N_{\Theta}$.
    Moreover, there exist positive constants $G>0$ and $K>0$, a $\mathcal{B}(\R^{d})\otimes\left(\mathcal{B}\left(\R^{p}\right)|_{N_{\Theta}}\right)$-measurable function $\sfG$, and an $\calF_{i}^{a,b}\otimes\left(\mathcal{B}\left(\R^{p}\right)|_{N_{\Theta}}\right)$-measurable random function $\sfK$ such that $\sfG\left(\theta;\xi\right)\in \partial F\left(\theta;\xi\right)$ and $\sfK_{i,n}\left(\theta\right)\in \partial H_{i,n}\left(\theta\right)$ a.s.\ for all $\xi\in\R^{d}$, $\theta\in \Theta$, and $i=1,\ldots,n$, and 
    \begin{align*}
        \sup_{\left(a,b\right)\in S_{\varpi}}\sup_{i\in\N}\E_{x}^{a,b}\left[\left\|\sfG\left(\vartheta_{i};\xi_{i}\right)\right\|_{2}^{2}\right]\le G^{2},\ 
        \sup_{\left(a,b\right)\in S_{\varpi}}\sup_{i=1,\ldots,n}\E_{x}^{a,b}\left[\left\|\sfK_{i,n}\left(\vartheta_{i}\right)\right\|_{2}^{2}\right]\le K^{2}
    \end{align*}
    for all $n\in\N$ and sequence of $\calF_{i\wedge n-1}^{a,b}$-measurable $\Theta$-valued random variables $\vartheta_{i}$.
    \item[(V2)] There exists a constant $\zeta>0$ such that for all $x\in\R^{d}$ and $\theta\in N_{\Theta}$,
    \begin{align*}
        \left\|\AModel\left(x,\theta\right)\right\|_{F}\le \zeta\left(1+\left\|x\right\|_{2}^{\zeta}\right),\ 
        \left\|M\left(x\right)\right\|_{2}\le \zeta.
    \end{align*}
\end{itemize}

\subsubsection{Main results}
Firstly, we exhibit the bound for the residual terms $\mathcal{R}_{\tau,n}$.

\begin{proposition}\label{prp:st:a:Resid}
    Assume that (V2) holds.
    There exists a constant $c>0$ dependent only on $\left(\alpha,\kappa_{0},\kappa_{1},\zeta,d\right)$ such that for any sequence $\left\{\vartheta_{i};i=1,\ldots,n\right\}$ with $\calF_{i-1}$-measurable $\Theta$-valued random variables $\vartheta_{i}$, $\tau\in\N_{0}$, and $n\in\N$,
    \begin{align*}
        \left|\E_{x}^{a,b}\left[\sum_{i=1}^{n-\tau}\left(F\left(\vartheta_{i};\xi_{i+\tau}\right)-H_{i+\tau,n}\left(\vartheta_{i}\right)\right)\right]\right|\le cnh_{n}^{\alpha/2}\left(1+\sup_{t\ge0}\E_{x}^{a,b}\left[\left\|X_{t}^{a,b}\right\|_{2}^{c}\right]\right).
    \end{align*}
\end{proposition}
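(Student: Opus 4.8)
The plan is to reproduce the argument of Proposition~\ref{prp:st:b:Resid}, with the drift‑increment estimate replaced by a second‑moment estimate of the increment; this is why the H\"older exponent $\alpha$ of $a$, rather than $\beta$, governs the rate. Using the closed‑form identity established above for the diffusion case, $H_{i,n}(\theta)-F(\theta;\xi_{i})=(A_{i-1}-\AModel_{i-1}(\theta))[M_{i-1}(\tfrac{1}{h_{n}}(\Delta_{i}X)^{\otimes2}-A_{i-1})M_{i-1}]$, I would shift the index by $\tau$ and condition on $X_{(i+\tau-1)h_{n}}$. Since $\vartheta_{i}$ is $\calF_{i-1}$‑measurable and $i+\tau-1\ge i-1$, the factor built from $A_{i+\tau-1}$, $\AModel_{i+\tau-1}(\vartheta_{i})$ and $M_{i+\tau-1}$ is $\calF_{i+\tau-1}$‑measurable and comes out of the inner conditional expectation, leaving the quantity $\Psi_{j}:=\E_{x}^{a,b}[\tfrac{1}{h_{n}}(\Delta_{j}X)^{\otimes2}-A_{j-1}\mid X_{(j-1)h_{n}}]$ with $j=i+\tau$. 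Cauchy--Schwarz in the pairing $A[B]=\tr(A^{\top}B)$, together with $\|A(x)\|_{2}\le\kappa_{0}$ (from $(H_{\alpha}^{a})$), $\|\AModel(x,\theta)\|_{F}\le\zeta(1+\|x\|_{2}^{\zeta})$ and $\|M(x)\|_{2}\le\zeta$ (from (V2)), reduces everything to the pointwise bound $\|\Psi_{j}\|_{F}\le c\,h_{n}^{\alpha/2}(1+\|X_{(j-1)h_{n}}\|_{2}^{c})$ with polynomial growth in $\|X_{(j-1)h_{n}}\|_{2}$.

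For the core estimate on $\Psi_{j}$, set $\bar{x}:=X_{(j-1)h_{n}}$ and write $\Delta_{j}X=I_{b}+I_{a}$ with $I_{b}=\int b(X_{s})\,\diff s$ and $I_{a}=\int a(X_{s})\,\diff w_{s}$ over the step of length $h_{n}$, so that $(\Delta_{j}X)^{\otimes2}=I_{a}I_{a}^{\top}+I_{b}I_{a}^{\top}+I_{a}I_{b}^{\top}+I_{b}I_{b}^{\top}$. It\^o's isometry gives $\E[I_{a}I_{a}^{\top}\mid\bar{x}]=\E[\int a^{\otimes2}(X_{s})\,\diff s\mid\bar{x}]$, so after subtracting $h_{n}A_{j-1}$ and dividing by $h_{n}$ this term equals $\tfrac{1}{h_{n}}\int\E[a^{\otimes2}(X_{s})-a^{\otimes2}(\bar{x})\mid\bar{x}]\,\diff s$; its Frobenius norm is controlled using $\|a^{\otimes2}(x)-a^{\otimes2}(y)\|_{F}\le c(\|x-y\|_{2}^{\alpha}\wedge1)$ (which follows from $(H_{\alpha}^{a})$, since $\|a\|_{F}^{2}\le\kappa_{0}d$), the short‑interval moment estimate $\E[\|X_{s}-\bar{x}\|_{2}^{2}\mid\bar{x}]\le c\,h_{n}(1+\|\bar{x}\|_{2}^{2})$ for $h_{n}\le1$ (standard, from $(H_{\beta}^{b})$, $(H_{\alpha}^{a})$ and It\^o's isometry), and Jensen's inequality, and is therefore of order $h_{n}^{\alpha/2}(1+\|\bar{x}\|_{2}^{\alpha})$. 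The remaining three terms are lower order: $\tfrac1{h_{n}}\E[\|I_{b}I_{b}^{\top}\|_{F}\mid\bar{x}]\le\tfrac1{h_{n}}\E[\|I_{b}\|_{2}^{2}\mid\bar{x}]\le c\,h_{n}(1+\|\bar{x}\|_{2}^{c})$, and by Cauchy--Schwarz $\tfrac1{h_{n}}\E[\|I_{b}I_{a}^{\top}\|_{F}\mid\bar{x}]\le\tfrac1{h_{n}}(\E[\|I_{b}\|_{2}^{2}\mid\bar{x}])^{1/2}(\E[\|I_{a}\|_{2}^{2}\mid\bar{x}])^{1/2}\le c\,h_{n}^{1/2}(1+\|\bar{x}\|_{2}^{c})$; both are dominated by $c\,h_{n}^{\alpha/2}(1+\|\bar{x}\|_{2}^{c})$ since $h_{n}\le1$ and $\alpha\le1$. (This is the diffusion analog of Lemma~\ref{lem:st:NormEx} and may be isolated as a separate lemma.) Summing the $n-\tau\le n$ summands, each bounded by $c\,h_{n}^{\alpha/2}(1+\E_{x}^{a,b}[\|X_{(i+\tau-1)h_{n}}\|_{2}^{c}])$, and invoking the uniform moment bound $\sup_{t\ge0}\E_{x}^{a,b}[\|X_{t}^{a,b}\|_{2}^{c}]<\infty$ of Corollary~\ref{cor:pr:Moments} then gives the claim, with $c$ depending only on $(\alpha,\kappa_{0},\kappa_{1},\zeta,d)$ because $(H_{\alpha}^{a})$, $(H_{\beta}^{b})$, $(L_{\gamma}^{b})$ and (V2) fix all constants used.

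The main obstacle is the conditional‑mean estimate on $\Psi_{j}$ in the second step. Unlike the drift residual, where $\E[\Delta_{j}X-h_{n}b_{j-1}\mid X_{(j-1)h_{n}}]$ is $O(h_{n}^{1+\beta/2})$ because the martingale part averages out, here the leading term $h_{n}^{-1}I_{a}I_{a}^{\top}$ does \emph{not} vanish in expectation, so one must quantify the fluctuation of $a^{\otimes2}$ along the path over a single step of length $h_{n}$. The mere $\alpha$‑H\"older regularity assumed in $(H_{\alpha}^{a})$ (rather than Lipschitz or $\mathcal{C}^{2}$) is precisely what produces the exponent $h_{n}^{\alpha/2}$; for smooth $a$ one would instead obtain $O(h_{n})$ via It\^o's formula, in parallel with the remark after Proposition~\ref{prp:st:b:Resid}. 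Once the pointwise estimate is established with polynomial growth, uniformity over $S_{\varpi}$ is automatic, since Corollary~\ref{cor:pr:Moments} is already uniform on $S_{\varpi}$ and all growth constants used come from $(H_{\alpha}^{a})$, $(H_{\beta}^{b})$, $(L_{\gamma}^{b})$ and (V2).
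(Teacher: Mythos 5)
Your proposal is correct and follows essentially the same route as the paper: the closed-form identity for $H_{i,n}-F$, conditioning via the Markov property to pull out the $\calF_{i+\tau-1}$-measurable factor, the decomposition of $(\Delta_{i}X)^{\otimes2}$ with the It\^{o} isometry isolating $\tfrac{1}{h_{n}}\int\E\left[A\left(X_{s}\right)-A\left(X_{\left(i-1\right)h_{n}}\right)\right]\diff s$, the $\alpha$-H\"{o}lder continuity of $A$ combined with the increment moment bound (Lemma \ref{lem:st:ExNorm}) to get $h_{n}^{\alpha/2}$, lower-order control of the drift and cross terms, and summation with the uniform moment factor. The only cosmetic differences are your use of Jensen on the second moment instead of applying Lemma \ref{lem:st:ExNorm} directly with $p=\alpha$, and the superfluous appeal to Corollary \ref{cor:pr:Moments} and $(L_{\gamma}^{b})$, which are not needed since $\sup_{t\ge0}\E_{x}^{a,b}\left[\left\|X_{t}^{a,b}\right\|_{2}^{c}\right]$ appears explicitly in the stated bound.
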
 
\begin{proof}

By the It\^{o} isometry and noting the $\alpha$-H\"{o}lder continuity of $A=a^{\otimes2}$ obtained by ($H_{\alpha}^{a}$), Lemma \ref{lem:st:ExNorm} and a discussion quite similar to Lemma \ref{lem:st:NormEx} yield
\begin{align*}
    &\left\|\E\left[\frac{1}{h_{n}}\left(\Delta_{i}X\right)^{\otimes2}-A\left(X_{\left(i-1\right)h_{n}}\right)|X_{\left(i-1\right)h_{n}}\right]\right\|_{F}\\
    &=\left\|\E\left[\frac{1}{h_{n}}\left(\int_{\left(i-1\right)h_{n}}^{ih_{n}}b\left(X_{s}\right)\diff s+\int_{\left(i-1\right)h_{n}}^{ih_{n}}a\left(X_{s}\right)\diff w_{s}\right)^{\otimes2}-A\left(X_{\left(i-1\right)h_{n}}\right)|X_{\left(i-1\right)h_{n}}\right]\right\|_{F}\\
    &\le \left\|\E\left[\frac{1}{h_{n}}\left(\int_{\left(i-1\right)h_{n}}^{ih_{n}}b\left(X_{s}\right)\diff s\right)^{\otimes2}|X_{\left(i-1\right)h_{n}}\right]\right\|_{F}\\
    &\quad+\left\|\E\left[\frac{2}{h_{n}}\left(\int_{\left(i-1\right)h_{n}}^{ih_{n}}b\left(X_{s}\right)\diff s\right)\left(\int_{\left(i-1\right)h_{n}}^{ih_{n}}a\left(X_{s}\right)\diff w_{s}\right)^{\top}|X_{\left(i-1\right)h_{n}}\right]\right\|_{F}\\
    &\quad+\left\|\E\left[\frac{1}{h_{n}}\left(\int_{\left(i-1\right)h_{n}}^{ih_{n}}a\left(X_{s}\right)\diff w_{s}\right)^{\otimes2}-A\left(X_{\left(i-1\right)h_{n}}\right)|X_{\left(i-1\right)h_{n}}\right]\right\|_{F}\\
    &\le ch_{n}^{1/2}\left(1+\left\|X_{\left(i-1\right)h_{n}}\right\|_{2}\right)^{c}+\left\|\E\left[\frac{1}{h_{n}}\int_{\left(i-1\right)h_{n}}^{ih_{n}}A\left(X_{s}\right)\diff s-A\left(X_{\left(i-1\right)h_{n}}\right)|X_{\left(i-1\right)h_{n}}\right]\right\|_{F}\\
    &\le ch_{n}^{1/2}\left(1+\left\|X_{\left(i-1\right)h_{n}}\right\|_{2}\right)^{c}+\E\left[\sup_{s\in\left[\left(i-1\right)h_{n},ih_{n}\right]}\left\|A\left(X_{s}\right)-A\left(X_{\left(i-1\right)h_{n}}\right)\right\|_{F}|X_{\left(i-1\right)h_{n}}\right]\\
    &\le c'h_{n}^{\alpha/2}\left(1+\left\|X_{\left(i-1\right)h_{n}}\right\|_{2}\right)^{c'},
\end{align*}
where $c,c'$ are constants dependent only on $\left(\alpha,\kappa_{0},\kappa_{1},\zeta,d\right)$.
The rest of the proof is quite parallel to that of Proposition \ref{prp:st:b:Resid}.
\end{proof}

We obtain the theorem for the diffusion estimation.
\begin{theorem}\label{thm:st:a:UnifRiskBound}
    Assume that $h_{n}\in\left(0,1\right]$, $\log nh_{n}\ge 1$, and (V1)--(V2) hold.
    \begin{enumerate}
        \item[(i)] Under the update rule \eqref{SMDUpdate} with $\eta_{i}=\eta\sqrt{h_{n}/i\log nh_{n}}$ and $\eta>0$, for any $x\in\R^{d}$, there exists a positive constant $c>0$ dependent only on $\left(\varpi,\zeta,\eta,G,K,R,d,x\right)$ such that
        \begin{align*}
            \sup_{\left(a,b\right)\in S_{\varpi}}\sup_{\theta\in\Theta}\E_{x}^{a,b}\left[\sum_{i=1}^{n}\left(f^{a,b}\left(\theta_{i}\right)-f^{a,b}\left(\theta\right)\right)\right]\le c\left(\sqrt{\frac{n}{h_{n}}}\sqrt{\log nh_{n}}+nh_{n}^{\alpha/2}\right).
        \end{align*}
        \item[(ii)] Under the update rule \eqref{SMDUpdate} with  $\eta_{i}=\sqrt{h_{n}/i}$ and $\eta>0$, for any $x\in\R^{d}$, there exists a positive constant $c>0$ dependent only on $\left(\varpi,\zeta,\eta,G,K,R,d,x\right)$ such that
        \begin{align*}
            \sup_{\left(a,b\right)\in S_{\varpi}}\sup_{\theta\in\Theta}\E_{x}^{a,b}\left[\sum_{i=1}^{n}\left(f^{a,b}\left(\theta_{i}\right)-f^{a,b}\left(\theta\right)\right)\right]\le c\left(\sqrt{\frac{n}{h_{n}}}\log nh_{n}+nh_{n}^{\alpha/2}\right).
        \end{align*}
    \end{enumerate}
\end{theorem}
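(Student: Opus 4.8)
The plan is to follow the proof of Theorem~\ref{thm:st:b:UnifRiskBound} almost verbatim, since the diffusion case differs only in two respects: the approximate subgradient now has an $h_{n}$-free second moment (by (V1)), which changes the optimal learning rate, and the residual term scales like $h_{n}^{\alpha/2}$ rather than $h_{n}^{\beta/2}$ (by Proposition~\ref{prp:st:a:Resid}). First I would specialise the Section~2 framework by taking $\psi=\|\cdot\|_{2}^{2}/2$ and $\|\cdot\|=\|\cdot\|_{\ast}=\|\cdot\|_{2}$, so that \eqref{SMDUpdate} is ordinary online subgradient descent on $\Theta$ with $\sup_{\theta,\theta'\in\Theta}D_{\psi}(\theta,\theta')\le R^{2}/2$, and $\theta_{i}$ is well-defined and $\calF_{i}^{a,b}$-measurable by the measurability of $\sfG,\sfK$ postulated in (V1). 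I would then verify that (A1)--(A3) hold with constants uniform over $S_{\varpi}$: (A1) and (A2) are exactly the two moment inequalities in (V1), with $K_{n}=K$ carrying \emph{no} factor $h_{n}^{-1}$ --- this is the structural difference from the drift case and is what forces $\eta_{i}\asymp\sqrt{h_{n}/i}$ instead of $h_{n}/\sqrt{i}$.

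For (A3) I would repeat the mixing-time estimate from the proof of Theorem~\ref{thm:st:b:UnifRiskBound}: by the Markov property the conditional law of $\xi_{i+\tau}=X^{a,b}_{(i+\tau-1)h_{n}}$ given $\calF_{i}^{a,b}$ is $P^{a,b}_{(\tau-1)h_{n}}(X^{a,b}_{ih_{n}},\cdot)$, so from $d_{\mathrm{Hel}}^{2}\le\|\cdot\|_{\TV}$, Theorem~\ref{thm:pr:Harris}, and the uniform moment bound of Corollary~\ref{cor:pr:Moments},
\begin{align*}
    \sup_{i\in\N_{0}}\E_{x}^{a,b}\left[d_{\mathrm{Hel}}^{2}\left(\left(P^{a,b}_{x}\right)_{[i]|\F}^{i+\tau},\Pi^{a,b}\right)\right]\le c_{1}'\exp\left(-\tau h_{n}/c_{2}\right)\left(\exp\left(\nu\sqrt{1+\|x\|_{2}^{2}}\right)+c_{3}'\right),
\end{align*}
with $c_{1}',c_{2},c_{3}',\nu$ depending only on $(\varpi,d)$, hence $\tau_{\E}(\epsilon)\le 1+(c_{2}/h_{n})\log(c/\epsilon^{2})$ uniformly in $S_{\varpi}$. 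Feeding this, together with the residual bound $|\E_{x}^{a,b}[\resid_{\tau,n}(\{\theta_{i}\})-\resid_{\tau,n}(\theta)]|\le cnh_{n}^{\alpha/2}$ coming from Proposition~\ref{prp:st:a:Resid} and Corollary~\ref{cor:pr:Moments}, into Theorem~\ref{thm:op:SMDEx} gives, for every $\tau\in\N$,
\begin{align*}
    \sup_{\theta\in\Theta}\E_{x}^{a,b}\left[\sum_{i=1}^{n}\left(f^{a,b}(\theta_{i})-f^{a,b}(\theta)\right)\right]\le cn\,e^{-\tau h_{n}/c}+c\tau\sum_{i=1}^{n}\eta_{i}+\frac{c}{\eta_{n}}+c\tau+cnh_{n}^{\alpha/2},
\end{align*}
with $c=c(\varpi,\zeta,G,K,R,d,x)$.

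Finally I would balance the terms. Choosing $\tau=\left[c(\log nh_{n})/(2h_{n})\right]$ makes $ne^{-\tau h_{n}/c}$ of order $\sqrt{n/h_{n}}$, and the additive $c\tau$ is absorbed because $\log nh_{n}\le nh_{n}\le n$ (using $h_{n}\le1$). For (i), $\eta_{i}=\eta\sqrt{h_{n}/(i\log nh_{n})}$ gives $\sum_{i}\eta_{i}\asymp\eta\sqrt{nh_{n}/\log nh_{n}}$ and $\eta_{n}^{-1}\asymp\eta^{-1}\sqrt{n\log nh_{n}/h_{n}}$, so both middle terms are of order $\sqrt{(n/h_{n})\log nh_{n}}$; for (ii), $\eta_{i}=\eta\sqrt{h_{n}/i}$ gives $\sum_{i}\eta_{i}\asymp\eta\sqrt{nh_{n}}$ and $\eta_{n}^{-1}\asymp\eta^{-1}\sqrt{n/h_{n}}$, with the dominant middle contribution $c\tau\sum_{i}\eta_{i}\asymp\sqrt{n/h_{n}}\,\log nh_{n}$, and in each case the asserted bound follows. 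Since the genuinely nontrivial ingredients --- that Theorem~\ref{thm:op:SMDEx} still applies after replacing $F(\cdot;\xi_{i})$ by the observable $H_{i,n}$, that the ergodicity and moment constants of Section~3 are uniform over $S_{\varpi}$, and the It\^{o}-isometry/H\"{o}lder estimate behind Proposition~\ref{prp:st:a:Resid} --- are already in place, I expect no real obstacle here beyond bookkeeping; the one point to keep in view is that the constant $K$ in (V1) is $h_{n}$-free (whereas in the drift case the analogous bound is $\check{K}^{2}/h_{n}$), which is exactly why $\sqrt{n/h_{n}}$ appears rather than $\sqrt{n/h_{n}^{2}}$.
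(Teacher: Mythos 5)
Your proposal is correct and follows essentially the same route as the paper: the paper's proof simply invokes the argument "parallel to Theorem \ref{thm:st:b:UnifRiskBound}" (the same Hellinger/total-variation mixing bound from Theorem \ref{thm:pr:Harris} and Corollary \ref{cor:pr:Moments}, Theorem \ref{thm:op:SMDEx} with the residual bound of Proposition \ref{prp:st:a:Resid}), arriving at the same intermediate inequality $cn e^{-\tau h_{n}/c}+c\tau\sum_{i}\eta_{i}+c/\eta_{n}+c\tau+cnh_{n}^{\alpha/2}$ and the same choice $\tau=\left[c(\log nh_{n})/2h_{n}\right]$. Your explicit observation that (V1) gives $K_{n}=K$ without the $h_{n}^{-1}$ factor, which is what changes the learning rate and yields $\sqrt{n/h_{n}}$ in place of $\sqrt{n/h_{n}^{2}}$, is exactly the point implicit in the paper's appeal to the parallel argument.
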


\begin{proof}
We only examine (i) because the proof of (ii) is analogous.
The argument that is parallel to that for Theorem \ref{thm:st:b:UnifRiskBound} leads to the existence of $c>0$ depending only on $\left(\varpi,\zeta,G,K,R,d,x\right)$ such that for all $\tau$ and $n$,
\begin{align*}
     \sup_{\theta\in\Theta}\E_{x}^{a,b}\left[\sum_{i=1}^{n}\left(f^{a,b}\left(\theta_{i}\right)-f^{a,b}\left(\theta\right)\right)\right]&\le cn\exp\left(-\tau h_{n}/c \right)+c\tau\sum_{i=1}^{n}\eta_{i}+\frac{c}{\eta_{n}} +c\tau +cnh_{n}^{\alpha/2}.
\end{align*}
Letting $\tau=\left[c(\log nh_{n})/2h_{n}\right]$ yields
\begin{align*}
     \sup_{\theta\in\Theta}\E_{x}^{a,b}\left[\sum_{i=1}^{n}\left(f^{a,b}\left(\theta_{i}\right)-f^{a,b}\left(\theta\right)\right)\right]&\le c'\left(\sqrt{\frac{n}{h_{n}}}+\left(\eta+\eta^{-1}\right)\sqrt{\frac{n\log nh_{n}}{h_{n}}}+nh_{n}^{\alpha/2}\right),
\end{align*}
where $c'>0$ is a constant dependent only on $\left(\varpi,\zeta,G,\check{K},R,d,x\right)$.
This is the desired conclusion.
\end{proof}

\subsubsection{Online gradient descent with the least-square loss function for diffusion coefficients}
Let $M=I_{d}$ and $J\left(\theta\right)=0$, which define a least-square loss function.
Furthermore, we assume the following:
\begin{itemize}
    \item[(V2')] $\AModel\left(x,\theta\right)$ is in $\mathcal{C}^{1}\left(\R^{d}\times N_{\Theta}\right)$ and each component is linear in $\theta$ for all $x\in\R^{d}$, and there exists a constant $\zeta>0$ such that for all $x\in\R^{d}$ and $\theta\in N_{\Theta}$,
    \begin{align*}
        \left\|\AModel\left(x,\theta\right)\right\|_{F}\le \zeta\left(1+\left\|x\right\|_{2}^{\zeta}\right),\ \sqrt{\sum_{j=1}^{p}\left\|\partial_{\theta_{j}}\AModel\left(x,\theta\right)\right\|_{F}^{2}}\le \zeta\left(1+\left\|x\right\|_{2}^{\zeta}\right).
    \end{align*}
\end{itemize}

\begin{lemma}\label{lem:st:a:ls:MomGrad}
    Under (V2'), there exists a constant $K>0$ dependent only on $\left(\varpi,\zeta,d,x\right)$ such that for all $i=1,\ldots,n$, $n\in\N$, and $\calF_{i-1}$-measurable $\Theta$-valued random variables $\vartheta_{i}$,
    \begin{align*}
        \E_{x}^{a,b}\left[\left\|\partial_{\theta}\left(\AModel\left(X_{\left(i-1\right)h_{n}},\vartheta_{i}\right)\left[\frac{1}{h_{n}}\left(\Delta_{i}X\right)^{\otimes2}-A\left(X_{\left(i-1\right)h_{n}}\right)\right]\right)\right\|_{2}^{2}\right]\le K^{2}.
    \end{align*}
\end{lemma}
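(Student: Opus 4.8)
The plan is to follow the proof of Lemma~\ref{lem:st:b:ls:MomGrad}, replacing the drift increment by the rescaled squared increment and exploiting the linearity of $\AModel$ in $\theta$. Since each component of $\AModel\left(x,\theta\right)$ is linear in $\theta$, the scalar map $\theta\mapsto\AModel\left(x,\theta\right)\left[B\right]=\tr\left(\AModel\left(x,\theta\right)^{\top}B\right)$ is affine for every fixed matrix $B$, so $\partial_{\theta}\AModel\left(x,\theta\right)$ does not depend on $\theta$ and the $j$-th component of the gradient in the statement equals $\tr\bigl(\bigl(\partial_{\theta_{j}}\AModel\left(x,\theta\right)\bigr)^{\top}B\bigr)$. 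Writing $B_{i}:=\tfrac{1}{h_{n}}\left(\Delta_{i}X\right)^{\otimes2}-A\left(X_{\left(i-1\right)h_{n}}\right)$, the Cauchy--Schwarz inequality for the trace inner product together with (V2') then gives the pointwise bound
\begin{align*}
    \left\|\partial_{\theta}\left(\AModel\left(X_{\left(i-1\right)h_{n}},\vartheta_{i}\right)\left[B_{i}\right]\right)\right\|_{2}^{2}
    \le\left(\sum_{j=1}^{p}\left\|\partial_{\theta_{j}}\AModel\left(X_{\left(i-1\right)h_{n}},\vartheta_{i}\right)\right\|_{F}^{2}\right)\left\|B_{i}\right\|_{F}^{2}
    \le\zeta^{2}\left(1+\left\|X_{\left(i-1\right)h_{n}}\right\|_{2}^{\zeta}\right)^{2}\left\|B_{i}\right\|_{F}^{2},
\end{align*}
which reduces the problem to controlling $\E_{x}^{a,b}\bigl[\bigl(1+\|X_{\left(i-1\right)h_{n}}\|_{2}^{\zeta}\bigr)^{2}\|B_{i}\|_{F}^{2}\bigr]$.

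Next I would estimate $\|B_{i}\|_{F}^{2}$ conditionally on $X_{\left(i-1\right)h_{n}}$. Using $\left\|\left(\Delta_{i}X\right)^{\otimes2}\right\|_{F}=\left\|\Delta_{i}X\right\|_{2}^{2}$ and $\left\|u+v\right\|_{F}^{2}\le 2\left\|u\right\|_{F}^{2}+2\left\|v\right\|_{F}^{2}$, it suffices to bound $\tfrac{1}{h_{n}^{2}}\E_{x}^{a,b}\bigl[\left\|\Delta_{i}X\right\|_{2}^{4}\mid X_{\left(i-1\right)h_{n}}\bigr]$ and $\left\|A\left(X_{\left(i-1\right)h_{n}}\right)\right\|_{F}^{2}$. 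The second is at most $d\kappa_{0}^{2}$, since the upper ellipticity in ($H_{\alpha}^{a}$) forces $\left\|a^{\otimes2}\left(x\right)\right\|_{2}\le\kappa_{0}$. For the first, a fourth-moment estimate for the SDE increment --- of the type supplied by Lemma~\ref{lem:st:ExNorm}, obtained from the sublinear growth of $b$ implied by ($H_{\beta}^{b}$), the uniform boundedness of $a$, the Burkholder--Davis--Gundy inequality for the martingale part, and Gronwall's lemma --- yields a constant $c>0$ depending only on $\left(\varpi,d\right)$ with
\begin{align*}
    \E_{x}^{a,b}\Bigl[\left\|\Delta_{i}X\right\|_{2}^{4}\mid X_{\left(i-1\right)h_{n}}\Bigr]\le c\,h_{n}^{2}\left(1+\left\|X_{\left(i-1\right)h_{n}}\right\|_{2}\right)^{c}.
\end{align*}
The decisive point is that the right-hand side is $\mathcal{O}\left(h_{n}^{2}\right)$ --- the drift part contributes only $\mathcal{O}\left(h_{n}^{4}\right)$, which is absorbed because $h_{n}\le 1$ --- so the prefactor $h_{n}^{-2}$ is exactly cancelled and no negative power of $h_{n}$ remains; this is precisely why the present bound is $K^{2}$ rather than $\check{K}^{2}/h_{n}$ as in Lemma~\ref{lem:st:b:ls:MomGrad}.

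Combining the two displays, taking conditional expectation first (tower and Markov properties) and then absorbing all $X_{\left(i-1\right)h_{n}}$-dependent factors into a single polynomial in $\left\|X_{\left(i-1\right)h_{n}}\right\|_{2}$, I expect to reach
\begin{align*}
    \E_{x}^{a,b}\left[\left\|\partial_{\theta}\left(\AModel\left(X_{\left(i-1\right)h_{n}},\vartheta_{i}\right)\left[B_{i}\right]\right)\right\|_{2}^{2}\right]\le c'\left(1+\sup_{t\ge0}\E_{x}^{a,b}\left[\left\|X_{t}^{a,b}\right\|_{2}^{c'}\right]\right)
\end{align*}
for a constant $c'>0$ depending only on $\left(\varpi,\zeta,d\right)$, and then apply Corollary~\ref{cor:pr:Moments} to bound the supremum on the right by a quantity depending only on $\left(\varpi,d,x\right)$ and $c'$, which gives the claimed $K$ depending only on $\left(\varpi,\zeta,d,x\right)$. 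The main obstacle is this moment step: one must check that the constant $c$ in the fourth-moment bound for $\Delta_{i}X$ can be taken uniformly over $S_{\varpi}$, i.e.\ depending on $\left(a,b\right)$ only through $\varpi$; this hinges on the uniform ellipticity bound for $a^{\otimes2}$ and the $\varpi$-controlled linear growth of $b$, both of which are built into the definition of $S_{\varpi}$ via ($H_{\alpha}^{a}$) and ($H_{\beta}^{b}$).
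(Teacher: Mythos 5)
Your argument is correct and is essentially the paper's proof: the same Cauchy--Schwarz bound on the trace inner product combined with (V2'), the fourth-moment increment estimate of Lemma \ref{lem:st:ExNorm} (whose constants depend only on $\left(\kappa_{0},\kappa_{1},d,p\right)$, which already settles the uniformity over $S_{\varpi}$ you flag as the main obstacle), and Corollary \ref{cor:pr:Moments} to absorb the polynomial factors in $\left\|X_{\left(i-1\right)h_{n}}\right\|_{2}$. The only cosmetic difference is that you spell out the bound $\left\|A\right\|_{F}^{2}\le d\kappa_{0}^{2}$ and the cancellation of $h_{n}^{-2}$ explicitly, which the paper leaves implicit.
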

\begin{proof}
Corollary \ref{cor:pr:Moments} and Lemma \ref{lem:st:ExNorm} yield
\begin{align*}
    &\E\left[\sum_{j=1}^{p}\left(\partial_{\theta_{j}}\AModel\left(X_{\left(i-1\right)h_{n}},\vartheta_{i}\right)\left[\frac{1}{h_{n}}\left(\Delta_{i}X\right)^{\otimes2}-A\left(X_{\left(i-1\right)h_{n}}\right)\right]\right)^{2}\right]\\
    &\le \E\left[\left(\sum_{j=1}^{p}\left\|\partial_{\theta_{j}}\AModel\left(X_{\left(i-1\right)h_{n}},\vartheta_{i}\right)\right\|_{F}^{2}\right)\left\|\frac{1}{h_{n}}\left(\Delta_{i}X\right)^{\otimes2}-A\left(X_{\left(i-1\right)h_{n}}\right)\right\|_{F}^{2}\right]\\
    &\le c\E\left[\left(1+\left\|X_{\left(i-1\right)h_{n}}\right\|_{2}\right)^{c}\left(\frac{1}{h_{n}^{2}}\left\|\Delta_{i}X\right\|_{2}^{4}+\left\|A\left(X_{\left(i-1\right)h_{n}}\right)\right\|_{F}^{2}\right)\right]\\
    &\le K^{2}
\end{align*}
for some $c,K>0$ dependent only on $\left(\varpi,\zeta,d,x\right)$.
\end{proof}

The existence of $G$ dependent only on $\left(\varpi,\zeta,d,x\right)$ can be observed through an easier argument.
We obtain the following uniform risk bound:
\begin{corollary}\label{cor:st:a:ls:OGD}
Under $h_{n}\in\left(0,1\right]$, $\log nh_{n}\ge1$, the update rule \eqref{SMDUpdate} with $\eta_{i}:=\eta\sqrt{h_{n}/i}$ and $\eta>0$, (V1) and (V2'), $\Bar{\theta}_{n}:=\frac{1}{n}\sum_{i=1}^{n}\theta_{i}$ has the uniform risk bound such that
    \begin{align*}
        \sup_{\left(a,b\right)\in S_{\varpi}}\sup_{\theta\in\Theta}\E_{x}^{a,b}\left[f^{a,b}\left(\Bar{\theta}_{n}\right)-f^{a,b}\left(\theta\right)\right]\le c\left(\frac{\log nh_{n}}{\sqrt{nh_{n}}}+h_{n}^{\alpha/2}\right),
    \end{align*}
    where $c>0$ a positive constant dependent only on $\left(\varpi,\zeta,\eta,R,d,x\right)$
\end{corollary}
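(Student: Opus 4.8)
The plan is to follow the route of Corollary~\ref{cor:st:b:ls:OGD}: first observe that, under $M=I_{d}$, $J\equiv0$, and (V2'), the hypotheses (V1)--(V2) of Theorem~\ref{thm:st:a:UnifRiskBound} hold with constants $G$ and $K$ that depend only on $(\varpi,\zeta,d,x)$; then invoke part (ii) of that theorem with $\eta_{i}=\eta\sqrt{h_{n}/i}$; and finally conclude by a Jensen-type inequality that uses the convexity of $f^{a,b}$ and of $\Theta$.

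First I would check that (V2') implies (V2). With $M=I_{d}$ the growth bound on $\AModel$ is assumed outright, and $\|M(x)\|_{2}=1\le\zeta$ (taking $\zeta\ge1$ without loss of generality). Next, the convexity requirement in (V1): since each component of $\AModel(x,\cdot)$ is affine, the map $\theta\mapsto\tfrac12\|(\xi')^{\otimes2}-\AModel(\xi,\theta)\|_{F}^{2}$ is a convex quadratic composed with an affine map, hence convex; the same reasoning applies to $F^{a}(\cdot;\xi)$ and to each $H_{i,n}$. Because $\AModel\in\mathcal{C}^{1}$, the ordinary gradients $\sfG(\theta;\xi)=\partial_{\theta}F^{a}(\theta;\xi)$ and $\sfK_{i,n}(\theta)=\partial_{\theta}H_{i,n}(\theta)$ are well defined, jointly measurable, and lie in the respective subdifferentials. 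The moment bound $\sup_{(a,b)\in S_{\varpi}}\sup_{i}\E_{x}^{a,b}[\|\sfK_{i,n}(\vartheta_{i})\|_{2}^{2}]\le K^{2}$ with $K$ depending only on $(\varpi,\zeta,d,x)$ is precisely Lemma~\ref{lem:st:a:ls:MomGrad}, while the analogous bound for $\sfG$ follows from the polynomial growth of $A=a^{\otimes2}$ implied by $(H_{\alpha}^{a})$, from (V2'), and from the uniform moment bounds of Corollary~\ref{cor:pr:Moments} applied to $\Pi^{a,b}$. Thus (V1) holds with $G$ and $K$ absorbed into a dependence on $(\varpi,\zeta,d,x)$.

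With the hypotheses in place, Theorem~\ref{thm:st:a:UnifRiskBound}(ii) gives a constant $c>0$ depending only on $(\varpi,\zeta,\eta,R,d,x)$ with
\begin{align*}
\sup_{(a,b)\in S_{\varpi}}\sup_{\theta\in\Theta}\E_{x}^{a,b}\left[\sum_{i=1}^{n}\left(f^{a,b}(\theta_{i})-f^{a,b}(\theta)\right)\right]\le c\left(\sqrt{\frac{n}{h_{n}}}\log nh_{n}+nh_{n}^{\alpha/2}\right).
\end{align*}
Since $\Theta$ is convex, $\Bar{\theta}_{n}=\tfrac1n\sum_{i=1}^{n}\theta_{i}\in\Theta$, and the convexity of $f^{a,b}$ yields the pathwise bound $f^{a,b}(\Bar{\theta}_{n})-f^{a,b}(\theta)\le\tfrac1n\sum_{i=1}^{n}(f^{a,b}(\theta_{i})-f^{a,b}(\theta))$; taking $\E_{x}^{a,b}$, then the supremum over $(a,b)\in S_{\varpi}$ and $\theta\in\Theta$, and dividing by $n$ gives the claim, using $n^{-1}\sqrt{n/h_{n}}\log nh_{n}=\log nh_{n}/\sqrt{nh_{n}}$ and $n^{-1}nh_{n}^{\alpha/2}=h_{n}^{\alpha/2}$. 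The only delicate point is bookkeeping of constant dependence---ensuring $G$ and $K$ are controlled by $(\varpi,\zeta,d,x)$ alone so that they can be suppressed in the final constant---rather than any genuine analytic difficulty, the substantive estimates having already been carried out in Lemma~\ref{lem:st:a:ls:MomGrad} and Theorem~\ref{thm:st:a:UnifRiskBound}.
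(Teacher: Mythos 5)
Your proposal is correct and follows exactly the route the paper intends (the corollary is stated as an immediate consequence): verify that under $M=I_{d}$, $J\equiv0$, and (V2') the convexity and moment requirements hold with $K$ from Lemma \ref{lem:st:a:ls:MomGrad} and $G$ controlled by $\left(\varpi,\zeta,d,x\right)$, apply Theorem \ref{thm:st:a:UnifRiskBound}(ii) with $\eta_{i}=\eta\sqrt{h_{n}/i}$, and finish by Jensen's inequality using the convexity of $f^{a,b}$ and of $\Theta$ after dividing by $n$. The constant bookkeeping you flag is indeed the only substantive point, and you handle it as the paper does.
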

For the diffusion coefficient estimation, the upper bound is of order $\mathcal{O}\left(\frac{\log nh_{n}}{\sqrt{nh_{n}}}+h_{n}^{\alpha/2}\right)$, which is better than that of the drift estimation.


\section{Discussion}
We obtain a uniform risk bound for the online parametric estimation of the drift and diffusion coefficients with misspecified modelling and discrete observations via online gradient descent.
Our estimator via online gradient descent is computationally efficient for repetitive construction, which is a favorable property for online decision-making, one of the most significant motivations for time series data analysis.

Let us discuss some limitations of our results.
We obtain the present results under weak conditions on convexity and smoothness; however, the convergence rates are worse than those obtained by the batch estimation methods proposed by other studies.
The classes of SDEs are also restricted to Wiener-driven non-degenerate SDEs satisfying strong drift conditions.
Furthermore, the statistical problems considered in this study are restricted to the estimation of the drift and diffusion coefficients of such SDEs based on discrete observations without noises.
However, as the bound is an outcome of our contributions to SMD, the simultaneous ergodicity of SDEs, and the proposal of loss functions, modifying any of them would lead to new results solving those limitations.
We highlight some possible future research directions that can improve our results.

\subsection*{Stochastic mirror descent}
Our risk bound on SMD cannot be improved except for constant factors if we do not set additional assumptions \citep{DAJ+12}.
In i.i.d.\ cases, the stochastic gradient descent for strongly convex loss functions is known to achieve faster convergence.
Hence, if extending our result is possible in the case of strong convexity, we can obtain new online estimators with better convergence rates.
In addition, recent studies have shown the convergence of stochastic gradient descent to local minima for nonconvex loss functions.
Therefore, we can obtain the convergence of our estimators to local minima if our result on SMD is generalized in nonconvex cases.

\subsection*{Simultaneous ergodicity}
Our proof for the simultaneous ergodicity of classes of SDEs is heavily dependent on the Aronson-type estimates for transition densities and their gradients of SDEs under considerably weak conditions by \citet{MPZ21}.
Therefore, the same type of bounds with weaker assumptions can clearly broaden our class of coefficients $S_{\varpi}$.
In addition, similar bounds for the different classes of stochastic processes can provide similar simultaneous ergodicity and further extend our result.
Note that the simultaneous sub-exponential ergodicity should hold by combining the discussions of \citet{Kul17} and \citet{MPZ21}.
This enables us to weaken the assumption if the deterioration of the risk bound is allowed.

\subsection*{Proposal of loss functions}
We use loss functions for drift and diffusion estimation and provide approximations whose gradients are independent of unknown functions/quantities.
\citet{Mas05} considers least-square-type estimation for a broad class of stochastic processes, including jump-diffusion processes; therefore, applying our discussion to model-wise risk bounds for the online parametric estimation of jump-diffusion processes is possible owing to the result of \citet{Mas07}.
Furthermore, the approximation of loss functions in our discussion is quite common in statistics of diffusion processes with partial observations such as integrated \citep{Glo00,Glo06} and noisy ones \citep{Fav14,Fav16}.
Hence, our approach applies even to the estimation of diffusion processes with partial observations.

\section{Conclusion}
The proposed estimation method for SDEs with discrete observations via online gradient descent has two desirable properties: computational efficiency and uniform risk guarantees even with model misspecification.
Its computational complexity is lower than existing batch estimation methods, and this improvement is particularly noticeable for online estimation.
In addition, the estimation has non-asymptotic risk bounds uniform in SDEs in certain classes, whose derivation has also been studied in the sequential estimation of SDEs.
Those bounds are obtained by SMD with dependent and biased subgradients and the simultaneous ergodicity of families of multidimensional SDEs.
These technical results are also new in gradient-based optimization algorithms for dependent data and the simultaneous ergodicity of classes of SDEs, respectively.
Notably, our study also provides theoretical guarantees for estimation with misspecified modelling.
Even when the models are misspecified, the estimators converge to the quasi-optimal parameters whose corresponding models of coefficients are close to the true coefficients in the $L^{2}$-distance with respect to the invariant probability measures.

\section*{Acknowledgements}
This work was partially supported by JSPS KAKENHI Grant Number JP21K20318.

\appendix

\section{Proofs of Theorem \ref{thm:op:SMDEx}}
We present technical lemmas for the proof of Theorem \ref{thm:op:SMDEx}.
Note that some of them are indifferent to those by \citet{DAJ+12}.
\begin{lemma}[Lemma 6.1 of \citealp{DAJ+12}]\label{lem:op:DAJ+12L61}
    Let $\theta_{i}$ be defined by the SMD update \eqref{SMDUpdate}.
    For any $\tau\in\N_{0}$ and any $\theta^{\prime}\in\Theta$,
    \begin{align*}
        \sum_{i=\tau+1}^{n}\left(H_{i,n}\left(\theta_{i}\right)-H_{i,n}\left(\theta^{\prime}\right)\right)\le \frac{R^{2}}{2\eta_{n}}+\sum_{i=\tau+1}^{n}\frac{\eta_{i}\left\|\sfK_{i,n}\left(\theta_{i}\right)\right\|_{\ast}^{2}}{2}
    \end{align*}
    almost surely.
\end{lemma}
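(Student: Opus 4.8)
The plan is to follow the standard analysis of the mirror-descent proximal step, treating everything pathwise so that the resulting inequality holds almost surely along the random sequences. Write $g_{i}:=\sfK_{i,n}\left(\theta_{i}\right)\in\partial H_{i,n}\left(\theta_{i}\right)$, which exists a.s.\ by hypothesis. The first step is to reduce the claim to a bound on the linearized quantity: by convexity of $H_{i,n}$ we have $H_{i,n}\left(\theta_{i}\right)-H_{i,n}\left(\theta^{\prime}\right)\le\left\langle g_{i},\theta_{i}-\theta^{\prime}\right\rangle$ for each $i$, so it suffices to prove $\sum_{i=\tau+1}^{n}\left\langle g_{i},\theta_{i}-\theta^{\prime}\right\rangle\le R^{2}/(2\eta_{n})+\sum_{i=\tau+1}^{n}\eta_{i}\left\|g_{i}\right\|_{\ast}^{2}/2$.

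Next I would extract the first-order optimality condition for the update \eqref{SMDUpdate}: since $\theta_{i+1}$ minimizes $\theta\mapsto\eta_{i}\left\langle g_{i},\theta\right\rangle+D_{\psi}\left(\theta,\theta_{i}\right)$ over the convex set $\Theta$ and $\psi$ is differentiable on $N_{\Theta}$, one gets $\left\langle\eta_{i}g_{i}+\nabla\psi\left(\theta_{i+1}\right)-\nabla\psi\left(\theta_{i}\right),\theta^{\prime}-\theta_{i+1}\right\rangle\ge0$. Combining this with the three-point identity $\left\langle\nabla\psi\left(\theta_{i}\right)-\nabla\psi\left(\theta_{i+1}\right),\theta_{i+1}-\theta^{\prime}\right\rangle=D_{\psi}\left(\theta^{\prime},\theta_{i}\right)-D_{\psi}\left(\theta^{\prime},\theta_{i+1}\right)-D_{\psi}\left(\theta_{i+1},\theta_{i}\right)$ yields $\eta_{i}\left\langle g_{i},\theta_{i+1}-\theta^{\prime}\right\rangle\le D_{\psi}\left(\theta^{\prime},\theta_{i}\right)-D_{\psi}\left(\theta^{\prime},\theta_{i+1}\right)-D_{\psi}\left(\theta_{i+1},\theta_{i}\right)$. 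Then I split $\left\langle g_{i},\theta_{i}-\theta^{\prime}\right\rangle=\left\langle g_{i},\theta_{i}-\theta_{i+1}\right\rangle+\left\langle g_{i},\theta_{i+1}-\theta^{\prime}\right\rangle$, bound the first summand by Young's inequality together with the $1$-strong convexity bound $D_{\psi}\left(\theta_{i+1},\theta_{i}\right)\ge\tfrac12\left\|\theta_{i+1}-\theta_{i}\right\|^{2}$, i.e.\ $\eta_{i}\left\langle g_{i},\theta_{i}-\theta_{i+1}\right\rangle\le\tfrac{\eta_{i}^{2}}{2}\left\|g_{i}\right\|_{\ast}^{2}+D_{\psi}\left(\theta_{i+1},\theta_{i}\right)$, so that the $D_{\psi}\left(\theta_{i+1},\theta_{i}\right)$ terms cancel and $\left\langle g_{i},\theta_{i}-\theta^{\prime}\right\rangle\le\tfrac{\eta_{i}}{2}\left\|g_{i}\right\|_{\ast}^{2}+\eta_{i}^{-1}\bigl(D_{\psi}\left(\theta^{\prime},\theta_{i}\right)-D_{\psi}\left(\theta^{\prime},\theta_{i+1}\right)\bigr)$. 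Summing over $i=\tau+1,\ldots,n$, the weighted differences are controlled by Abel summation: using that $\{\eta_{i}\}$ is non-increasing (hence $\eta_{i}^{-1}-\eta_{i-1}^{-1}\ge0$), that $\sup_{\theta_{1},\theta_{2}\in\Theta}D_{\psi}\left(\theta_{1},\theta_{2}\right)\le R^{2}/2$, and discarding the nonpositive term $-\eta_{n}^{-1}D_{\psi}\left(\theta^{\prime},\theta_{n+1}\right)$, the sum telescopes to at most $R^{2}/(2\eta_{n})$. Collecting the estimates gives the claim.

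The computation is entirely routine; the only points requiring care are the precise form of the variational inequality for the proximal step (which uses convexity of $\Theta$ and differentiability of $\psi$, both assumed) and the Abel-summation step exploiting monotonicity of the learning rates, which I expect to be the only mildly delicate part. Every inequality above is deterministic along the realized sequences $\{\theta_{i}\}$ and functions $\{H_{i,n}\}$, so the bound holds almost surely with no measurability input beyond the a.s.\ selection $\sfK_{i,n}\left(\theta_{i}\right)\in\partial H_{i,n}\left(\theta_{i}\right)$.
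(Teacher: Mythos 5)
Your proof is correct and is essentially the argument behind the cited result: the paper itself imports this bound directly as Lemma 6.1 of \citet{DAJ+12}, whose proof is exactly this standard mirror-descent analysis (first-order optimality of the proximal step, the three-point identity for $D_{\psi}$, Young's inequality with $1$-strong convexity, and the telescoping/Abel summation using the non-increasing learning rates and $\sup_{\theta_{1},\theta_{2}\in\Theta}D_{\psi}\left(\theta_{1},\theta_{2}\right)\le R^{2}/2$). Your reading of the update \eqref{SMDUpdate} with the subgradient frozen at $\theta_{i}$, i.e.\ $\left\langle \sfK_{i,n}\left(\theta_{i}\right),\theta\right\rangle$, is the intended one (consistent with the explicit rule \eqref{eq:OGDDriftUpdate}), so no gap remains.
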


\begin{lemma}[Lemma 6.2 of \citealp{DAJ+12}]\label{lem:op:DAJ+12L62}
    Let $\theta_{i}$ be defined by the SMD update \eqref{SMDUpdate}.
    Then
    \begin{align*}
        \left\|\theta_{i}-\theta_{i+1}\right\|\le \eta_{i}\left\|\sfK_{i,n}\left(\theta_{i}\right)\right\|_{\ast}
    \end{align*}
    almost surely.
\end{lemma}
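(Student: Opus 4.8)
The plan is to reproduce the classical mirror‑descent argument, i.e.\ the argument behind Lemma 6.2 of \citet{DAJ+12}, adapted to the present notation. Write $g_{i}:=\sfK_{i,n}(\theta_{i})$, so that $\theta_{i+1}$ is the minimizer over the compact convex set $\Theta$ of the strongly convex function $\theta\mapsto\langle g_{i},\theta\rangle+\eta_{i}^{-1}D_{\psi}(\theta,\theta_{i})$ (strong convexity, hence uniqueness, follows from the $1$‑strong convexity of $\psi$ and non‑negativity of the linear term). Since $\psi$ is differentiable on $N_{\Theta}$, this objective is differentiable with gradient $g_{i}+\eta_{i}^{-1}\bigl(\nabla\psi(\theta)-\nabla\psi(\theta_{i})\bigr)$, and the first‑order optimality (variational) inequality for constrained minimization over $\Theta$ yields, almost surely, for every $\theta\in\Theta$,
\[
    \left\langle g_{i}+\eta_{i}^{-1}\bigl(\nabla\psi(\theta_{i+1})-\nabla\psi(\theta_{i})\bigr),\ \theta-\theta_{i+1}\right\rangle\ge 0.
\]

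First I would specialize this inequality to $\theta=\theta_{i}\in\Theta$ and rearrange, which gives
\[
    \eta_{i}^{-1}\left\langle \nabla\psi(\theta_{i+1})-\nabla\psi(\theta_{i}),\ \theta_{i+1}-\theta_{i}\right\rangle\le\left\langle g_{i},\ \theta_{i}-\theta_{i+1}\right\rangle.
\]
Next I would bound the left‑hand side from below using $1$‑strong convexity of $\psi$: summing the two Bregman inequalities $D_{\psi}(\theta_{i+1},\theta_{i})\ge\tfrac12\|\theta_{i+1}-\theta_{i}\|^{2}$ and $D_{\psi}(\theta_{i},\theta_{i+1})\ge\tfrac12\|\theta_{i}-\theta_{i+1}\|^{2}$ produces
\[
    \left\langle \nabla\psi(\theta_{i+1})-\nabla\psi(\theta_{i}),\ \theta_{i+1}-\theta_{i}\right\rangle\ge\|\theta_{i+1}-\theta_{i}\|^{2}.
\]
Bounding the right‑hand side by the definition of the dual norm, $\langle g_{i},\theta_{i}-\theta_{i+1}\rangle\le\|g_{i}\|_{\ast}\|\theta_{i}-\theta_{i+1}\|$, and combining, I obtain $\eta_{i}^{-1}\|\theta_{i+1}-\theta_{i}\|^{2}\le\|g_{i}\|_{\ast}\|\theta_{i}-\theta_{i+1}\|$; dividing by $\|\theta_{i}-\theta_{i+1}\|$ when it is nonzero (the inequality being trivial otherwise) gives $\|\theta_{i}-\theta_{i+1}\|\le\eta_{i}\|\sfK_{i,n}(\theta_{i})\|_{\ast}$, which is the claim.

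There is no substantial obstacle here; the only care needed is of a bookkeeping nature. One must check that the variational inequality is invoked with the feasible point $\theta=\theta_{i}\in\Theta$ (not the larger neighbourhood $N_{\Theta}$), and one must ensure that all the displayed inequalities hold almost surely rather than merely pathwise, which is guaranteed by the $\mathcal{A}\otimes(\mathcal{B}(\R^{p})|_{N_{\Theta}})$‑measurability of $\sfK_{i,n}$ together with the (inductively established) measurability of $\theta_{i}$. In short, I would simply transcribe the proof of \citet{DAJ+12}, since their argument does not use any structure of the subgradients beyond $g_{i}\in\partial H_{i,n}(\theta_{i})$ and the $1$‑strong convexity of $\psi$.
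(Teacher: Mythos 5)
Your argument is correct and is exactly the standard first-order-optimality plus $1$-strong-convexity argument behind Lemma 6.2 of \citet{DAJ+12}, which the paper itself does not reproduce but simply cites (note that in the update \eqref{SMDUpdate} the subgradient is to be read as frozen at $\theta_{i}$, i.e.\ $\sfK_{i,n}(\theta_{i})$, as you did). The only cosmetic slip is your phrase ``non-negativity of the linear term'': uniqueness of the minimizer needs only that $\langle g_{i},\cdot\rangle$ is affine (hence convex), so the objective inherits $\eta_{i}^{-1}$-strong convexity from $\psi$; nothing about a sign is required.
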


The following lemmas and propositions are extensions of those by \citet{DAJ+12}.

\begin{lemma}\label{lem:op:DAJ+12L63}
    Let $i=0,\ldots,n$, $\vartheta_{i+1}$ be an $\calF_{i}$-measurable random variable, $\theta^{\prime}\in\Theta$ be an arbitrary non-random element, and $\tau\ge 1$.
    If Assumption (A1) holds,
    \begin{align*}
        &\left|\E\left[f\left(\vartheta_{i+1}\right)-f\left(\theta^{\prime}\right)-F\left(\vartheta_{i+1};\xi_{i+\tau}\right)+F\left(\theta^{\prime};\xi_{i+\tau}\right)|\calF_{i}\right]\right|\\
        &\le \sqrt{2}Rd_{\mathrm{Hel}}\left(P_{\left[i\right]}^{i+\tau},\Pi\right)\sqrt{2G^{2}+\E\left[\max\left\{\left\|\sfG\left(\vartheta_{i+1};\xi_{i+\tau}\right)\right\|_{\ast}^{2},\left\|\sfG\left(\theta^{\prime};\xi_{i+\tau}\right)\right\|_{\ast}^{2}\right\}|\calF_{i}\right]}
    \end{align*}
    almost surely.
\end{lemma}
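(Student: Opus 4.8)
\medskip
\noindent\textbf{Proof proposal.}
The plan is to condition on $\calF_i$, where $\vartheta_{i+1}$ becomes a frozen point of $\Theta$, reduce the conditional expectation to the integral of a single function against the signed measure $\Pi-P_{[i]}^{i+\tau}$, and control that integral by the Hellinger distance via Cauchy--Schwarz. First I would let $Q$ denote the regular conditional law of $\xi_{i+\tau}$ given $\calF_i$ (this exists since $\Xi$ is Polish; in the notation of the statement $Q=P_{[i]}^{i+\tau}$). Since $\vartheta_{i+1}$ is $\calF_i$-measurable, the substitution (``freezing'') rule for conditional expectations gives $\E[F(\vartheta_{i+1};\xi_{i+\tau})\mid\calF_i]=\int F(\vartheta_{i+1};\xi)\,Q(\diff\xi)$ and $\E[F(\theta';\xi_{i+\tau})\mid\calF_i]=\int F(\theta';\xi)\,Q(\diff\xi)$ almost surely; combined with $f(\theta)=\int F(\theta;\xi)\,\Pi(\diff\xi)$, this shows the quantity inside the absolute value equals $\int h(\xi)\,(\Pi-Q)(\diff\xi)$ with $h(\xi):=F(\vartheta_{i+1};\xi)-F(\theta';\xi)$.

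Next I would bound $h$ pointwise: convexity of $F(\cdot;\xi)$ together with the subgradient inequalities at $\theta'$ and at $\vartheta_{i+1}$ gives $\langle\sfG(\theta';\xi),\vartheta_{i+1}-\theta'\rangle\le h(\xi)\le\langle\sfG(\vartheta_{i+1};\xi),\vartheta_{i+1}-\theta'\rangle$, so by the dual-norm inequality $|h(\xi)|\le\|\vartheta_{i+1}-\theta'\|\max\{\|\sfG(\theta';\xi)\|_{*},\|\sfG(\vartheta_{i+1};\xi)\|_{*}\}$, and $\|\vartheta_{i+1}-\theta'\|^{2}\le 2D_{\psi}(\vartheta_{i+1},\theta')\le R^{2}$. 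Then, writing $\mu:=\tfrac12(\Pi+Q)$, $p:=\diff\Pi/\diff\mu$, $q:=\diff Q/\diff\mu$, I would use $\int h\,(\Pi-Q)(\diff\xi)=\int h(\sqrt p-\sqrt q)(\sqrt p+\sqrt q)\,\diff\mu$, Cauchy--Schwarz, and $(\sqrt p+\sqrt q)^{2}\le 2(p+q)$ to get
\[
\Bigl|\int h\,(\Pi-Q)(\diff\xi)\Bigr|\le d_{\mathrm{Hel}}(\Pi,Q)\sqrt{2\Bigl(\int h^{2}\,\diff\Pi+\int h^{2}\,\diff Q\Bigr)}.
\]
The pointwise bound gives $\int h^{2}\,\diff Q=\E[h(\xi_{i+\tau})^{2}\mid\calF_i]\le R^{2}\,\E[\max\{\|\sfG(\theta';\xi_{i+\tau})\|_{*}^{2},\|\sfG(\vartheta_{i+1};\xi_{i+\tau})\|_{*}^{2}\}\mid\calF_i]$ and $\int h^{2}\,\diff\Pi\le R^{2}\bigl(\int\|\sfG(\theta';\xi)\|_{*}^{2}\Pi(\diff\xi)+\int\|\sfG(\vartheta_{i+1};\xi)\|_{*}^{2}\Pi(\diff\xi)\bigr)\le 2R^{2}G^{2}$; in particular $h\in L^{2}(\Pi)$ and $h\in L^{2}(Q)$ a.s., so all integrals are finite. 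Substituting the two bounds and factoring out $R$ produces exactly $\sqrt2\,R\,d_{\mathrm{Hel}}(\Pi,Q)\sqrt{2G^{2}+\E[\max\{\cdots\}\mid\calF_i]}$, which is the claim.

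The step that will require care, and the only one that is not routine, is the inequality $\int\|\sfG(\eta;\xi)\|_{*}^{2}\,\Pi(\diff\xi)\le G^{2}$ for a fixed $\eta\in\Theta$ used above: (A1) only controls $\E[\|\sfG(\vartheta_i;\xi_i)\|_{*}^{2}]$ for the actual noise sequence, not the integral against the invariant law $\Pi$. I would recover it by Fatou's lemma: applying (A1) to the constant sequence $\vartheta_j\equiv\eta$ yields $\E[\|\sfG(\eta;\xi_j)\|_{*}^{2}]\le G^{2}$ for every $j\in\N$, while (A3) forces $\mathrm{Law}(\xi_j)$ to approach $\Pi$ in total variation along a subsequence (take $\calF_0$ trivial, or apply Jensen to the convex map $P\mapsto d_{\mathrm{Hel}}^{2}(P,\Pi)$ and then use $\|\cdot\|_{\TV}\le 2d_{\mathrm{Hel}}$); hence the total-variation form of Fatou's lemma gives $\int\|\sfG(\eta;\xi)\|_{*}^{2}\,\Pi(\diff\xi)\le\liminf_j\E[\|\sfG(\eta;\xi_j)\|_{*}^{2}]\le G^{2}$. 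This is precisely why (A1) is imposed for all $i\in\N$ rather than only over the range relevant to the update; the remaining ingredients (the subgradient inequalities, the Hellinger/Cauchy--Schwarz computation, and the freezing rule) are standard.
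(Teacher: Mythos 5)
Your proposal is correct and follows essentially the same route as the paper's proof: the integral (freezing) representation of the conditional expectation, the Hellinger/Cauchy--Schwarz bound with $(\sqrt p+\sqrt q)^{2}\le 2(p+q)$, the subgradient bound giving the factor $R^{2}\max\{\cdot\}$, and Fatou's lemma with (A1) to control the $\Pi$-integral by $2G^{2}$. Your closing discussion of the Fatou step (passing from the laws of the $\xi_{j}$ to $\Pi$ via (A3), Jensen, and total-variation convergence) is in fact a more explicit justification of what the paper compresses into ``by Fatou's lemma and (A1)''.
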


\begin{proof}
We have an integral representation such that
\begin{align*}
    &\E\left[f\left(\vartheta_{i+1}\right)-f\left(\theta^{\prime}\right)-F\left(\vartheta_{i+1};\xi_{i+\tau}\right)+F\left(\theta^{\prime};\xi_{i+\tau}\right)|\calF_{i}\right]\\
    &=\int\left(F\left(\vartheta_{i+1};\xi\right)-F\left(\theta^{\prime};\xi\right)\right)\Pi\left(\diff\xi\right)-\int\left(F\left(\vartheta_{i+1};\xi\right)-F\left(\theta^{\prime};\xi\right)\right)P_{\left[i\right]|\F}^{i+\tau}\left(\diff\xi\right).
\end{align*}
Let $p_{\left[s\right]|\F}^{t}$ and $\pi$ be the densities of $P_{\left[s\right]|\F}^{t}$ and $\Pi$ with respect to a reference measure $\mu$, respectively.
Then 
\begin{align*}
    &\int\left(F\left(\vartheta_{i+1};\xi\right)-F\left(\theta^{\prime};\xi\right)\right)\Pi\left(\diff\xi\right)-\int\left(F\left(\vartheta_{i+1};\xi\right)-F\left(\theta^{\prime};\xi\right)\right)P_{\left[i\right]|\F}^{i+\tau}\left(\diff\xi\right)\\
    &=\int\left(F\left(\vartheta_{i+1};\xi\right)-F\left(\theta^{\prime};\xi\right)\right)\left(\pi\left(\xi\right)-p_{\left[i\right]|\F}^{i+\tau}\left(\xi\right)\right)\mu\left(\diff\xi\right).
\end{align*}
As \citet{DAJ+12}, under Assumption (A1), it holds that
\begin{align*}
    &\left|\int\left(F\left(\vartheta_{i+1};\xi\right)-F\left(\theta^{\prime};\xi\right)\right)\left(\pi\left(\xi\right)-p_{\left[i\right]|\F}^{i+\tau}\left(\xi\right)\right)\mu\left(\diff\xi\right)\right|\\
    &\le d_{\mathrm{Hel}}\left(P_{\left[i\right]|\F}^{i+\tau},\Pi\right)\sqrt{\int\left(F\left(\vartheta_{i+1};\xi\right)-F\left(\theta^{\prime};\xi\right)\right)^{2}\left(\sqrt{\pi\left(\xi\right)}+\sqrt{p_{\left[i\right]|\F}^{i+\tau}\left(\xi\right)}\right)^{2}\mu\left(\diff\xi\right)}\\
    &\le \sqrt{2}d_{\mathrm{Hel}}\left(P_{\left[i\right]|\F}^{i+\tau},\Pi\right)\sqrt{\int\left(F\left(\vartheta_{i+1};\xi\right)-F\left(\theta^{\prime};\xi\right)\right)^{2}\left(\pi\left(\xi\right)+p_{\left[i\right]|\F}^{i+\tau}\left(\xi\right)\right)\mu\left(\diff\xi\right)}\\
    &=\sqrt{2}d_{\mathrm{Hel}}\left(P_{\left[i\right]|\F}^{i+\tau},\Pi\right)\\
    &\quad\times\sqrt{\left.\E_{\Pi}\left[\left(F\left(\theta;\xi\right)-F\left(\theta^{\prime};\xi\right)\right)^{2}\right]\right|_{\theta=\vartheta_{i+1}}+\E\left[\left(F\left(\vartheta_{i+1};\xi_{i+\tau}\right)-F\left(\theta^{\prime};\xi_{i+\tau}\right)\right)^{2}|\calF_{i}\right]}\\
    &\le \sqrt{2}Rd_{\mathrm{Hel}}\left(P_{\left[i\right]|\F}^{i+\tau},\Pi\right)\sqrt{2G^{2}+\E\left[\max\left\{\left\|\sfG\left(\vartheta_{i+1};\xi_{i+\tau}\right)\right\|_{\ast}^{2},\left\|\sfG\left(\theta^{\prime};\xi_{i+\tau}\right)\right\|_{\ast}^{2}\right\}|\calF_{i}\right]},
\end{align*}
where $\E_{\Pi}$ denotes the expectation with respect to $\xi\sim\Pi$, because 
\begin{align*}
    &\left.\E_{\Pi}\left[\left(F\left(\theta;\xi\right)-F\left(\theta^{\prime};\xi\right)\right)^{2}\right]\right|_{\theta=\vartheta_{i+1}}+\E\left[\left(F\left(\vartheta_{i+1};\xi_{i+\tau}\right)-F\left(\theta^{\prime};\xi_{i+\tau}\right)\right)^{2}|\calF_{i}\right]\\
    &\le R^{2}\left.\E_{\Pi}\left[\max\left\{\left\|\sfG\left(\theta;\xi\right)\right\|_{\ast}^{2}, \left\|\sfG\left(\theta^{\prime};\xi\right)\right\|_{\ast}^{2}\right\}\right]\right|_{\theta=\vartheta_{i+1}}\\
    &\quad+ R^{2}\E\left[\max\left\{\left\|\sfG\left(\vartheta_{i+1};\xi_{i+\tau}\right)\right\|_{\ast}^{2},\left\|\sfG\left(\theta^{\prime};\xi_{i+\tau}\right)\right\|_{\ast}^{2}\right\}|\calF_{i}\right]
\end{align*}
and for any $\theta\in\Theta$,
\begin{align*}
    \E_{\Pi}\left[\max\left\{\left\|\sfG\left(\theta;\xi\right)\right\|_{\ast}^{2},\left\|\sfG\left(\theta^{\prime};\xi\right)\right\|_{\ast}^{2}\right\}\right]\le \liminf_{n\to\infty}\E\left[\max\left\{\left\|\sfG\left(\theta;\xi_{n}\right)\right\|_{\ast}^{2},\left\|\sfG\left(\theta^{\prime};\xi_{n}\right)\right\|_{\ast}^{2}\right\}\right]\le 2G^{2}
\end{align*}
by Fatou's lemma and (A1).
This is the desired conclusion.
\end{proof}

\begin{lemma}\label{lem:op:DAJ+12L64}
    Let $\theta_{i}$ be defined by the SMD update \eqref{SMDUpdate}, $\tau\in\N$, and $\eta_{i}$ be non-increasing.
    Then
    \begin{align*}
        &H_{i+\tau,n}\left(\theta_{i}\right)-H_{i+\tau,n}\left(\theta_{i+\tau}\right)\\
        &\le \eta_{i}\sum_{j=i}^{i+\tau-1}\max\left\{\left\|\sfK_{i+\tau,n}\left(\theta_{j}\right)\right\|_{\ast},\left\|\sfK_{i+\tau,n}\left(\theta_{j+1}\right)\right\|_{\ast}\right\}\left\|\sfK_{j,n}\left(\theta_{j}\right)\right\|_{\ast}
    \end{align*}
    almost surely.
\end{lemma}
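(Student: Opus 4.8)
The plan is to telescope along the iterates, apply the subgradient inequality for the convex function $H_{i+\tau,n}$ at the \emph{left} endpoint of each consecutive pair, and then invoke Lemma~\ref{lem:op:DAJ+12L62} together with the monotonicity of the learning rates. This is the direct analogue of Lemma 6.4 of \citet{DAJ+12}, with $F\left(\cdot;\xi_{i+\tau}\right)$ replaced by the approximating function $H_{i+\tau,n}\left(\cdot\right)$.

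First, fix $\tau\in\N$ and write the pathwise telescoping identity
\begin{align*}
    H_{i+\tau,n}\left(\theta_{i}\right)-H_{i+\tau,n}\left(\theta_{i+\tau}\right)=\sum_{j=i}^{i+\tau-1}\left(H_{i+\tau,n}\left(\theta_{j}\right)-H_{i+\tau,n}\left(\theta_{j+1}\right)\right).
\end{align*}
For each $j$, since $\sfK_{i+\tau,n}\left(\theta_{j}\right)\in\partial H_{i+\tau,n}\left(\theta_{j}\right)$ a.s.\ and $H_{i+\tau,n}$ is convex, the subgradient inequality $H_{i+\tau,n}\left(\theta_{j+1}\right)\ge H_{i+\tau,n}\left(\theta_{j}\right)+\left\langle\sfK_{i+\tau,n}\left(\theta_{j}\right),\theta_{j+1}-\theta_{j}\right\rangle$ together with H\"{o}lder's inequality for the dual norm gives
\begin{align*}
    H_{i+\tau,n}\left(\theta_{j}\right)-H_{i+\tau,n}\left(\theta_{j+1}\right)\le\left\langle\sfK_{i+\tau,n}\left(\theta_{j}\right),\theta_{j}-\theta_{j+1}\right\rangle\le\left\|\sfK_{i+\tau,n}\left(\theta_{j}\right)\right\|_{\ast}\left\|\theta_{j}-\theta_{j+1}\right\|
\end{align*}
almost surely. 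Then by Lemma~\ref{lem:op:DAJ+12L62}, $\left\|\theta_{j}-\theta_{j+1}\right\|\le\eta_{j}\left\|\sfK_{j,n}\left(\theta_{j}\right)\right\|_{\ast}$ a.s., and since $j\ge i$ and $\left\{\eta_{i}\right\}$ is non-increasing, $\eta_{j}\le\eta_{i}$. Bounding $\left\|\sfK_{i+\tau,n}\left(\theta_{j}\right)\right\|_{\ast}\le\max\left\{\left\|\sfK_{i+\tau,n}\left(\theta_{j}\right)\right\|_{\ast},\left\|\sfK_{i+\tau,n}\left(\theta_{j+1}\right)\right\|_{\ast}\right\}$ and summing over $j=i,\ldots,i+\tau-1$ yields the stated inequality; the a.s.\ qualifier is preserved because only a finite intersection of a.s.\ events is used.

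I do not expect a genuine obstacle here: the argument is mechanical. The two points that need a little care are (a) using the subgradient of $H_{i+\tau,n}$ at $\theta_{j}$ rather than at $\theta_{j+1}$, so that each telescoped difference is bounded \emph{from above}, and (b) working throughout with the measurable selections $\sfK_{i,n}$ guaranteed by the standing assumptions so that every step is a valid a.s.\ statement. The $\max$ over $\left\{\theta_{j},\theta_{j+1}\right\}$ in the conclusion is merely a harmless relaxation that symmetrizes the bound and makes it convenient for the aggregation carried out in the proof of Theorem~\ref{thm:op:SMDEx}.
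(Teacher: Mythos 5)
Your proof is correct and follows essentially the same route as the paper: telescoping over $j=i,\ldots,i+\tau-1$, the subgradient inequality with the dual-norm (H\"{o}lder) bound, Lemma~\ref{lem:op:DAJ+12L62}, and monotonicity of $\eta_{i}$. The only cosmetic difference is that you bound each telescoped term one-sidedly via the subgradient at $\theta_{j}$ and then relax to the $\max$, whereas the paper bounds the absolute difference using the subgradients at both $\theta_{j}$ and $\theta_{j+1}$; both yield the identical final estimate.
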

\begin{proof}
Recall that for any convex $f$ with a subgradient $g\left(x\right)$ at $x$, $f\left(x\right)-f\left(y\right)\le \langle g\left(x\right),x-y\rangle$ for all $y$.
We obtain
\begin{align*}
    &H_{i+\tau,n}\left(\theta_{j}\right)-H_{i+\tau,n}\left(\theta_{j+1}\right)\\
    &\le \left|H_{i+\tau,n}\left(\theta_{j}\right)-H_{i+\tau,n}\left(\theta_{j+1}\right)\right|\\
    &=\max\left\{H_{i+\tau,n}\left(\theta_{j}\right)-H_{i+\tau,n}\left(\theta_{j+1}\right),H_{i+\tau,n}\left(\theta_{j+1}\right)-H_{i+\tau,n}\left(\theta_{j}\right)\right\}\\
    &\le \max\left\{\langle \sfK_{i+\tau,n}\left(\theta_{j}\right),\theta_{j}-\theta_{j+1}\rangle,\langle \sfK_{i+\tau,n}\left(\theta_{j+1}\right),\theta_{j+1}-\theta_{j}\rangle\right\}\\
    &\le \max\left\{\left|\langle \sfK_{i+\tau,n}\left(\theta_{j}\right),\theta_{j}-\theta_{j+1}\rangle\right|,\left|\langle \sfK_{i+\tau,n}\left(\theta_{j+1}\right),\theta_{j+1}-\theta_{j}\rangle\right|\right\}\\
    &\le \max\left\{\left\|\sfK_{i+\tau,n}\left(\theta_{j}\right)\right\|_{\ast},\left\|\sfK_{i+\tau,n}\left(\theta_{j+1}\right)\right\|_{\ast}\right\}\left\|\theta_{j}-\theta_{j+1}\right\|.
\end{align*}
It then holds that
\begin{align*}
    &H_{i+\tau,n}\left(\theta_{i}\right)-H_{i+\tau,n}\left(\theta_{i+\tau}\right)\\
    &=\sum_{j=i}^{i+\tau-1}\left(H_{i+\tau,n}\left(\theta_{j}\right)-H_{i+\tau,n}\left(\theta_{j+1}\right)\right)\\
    &\le \sum_{j=i}^{i+\tau-1}\max\left\{\left\|\sfK_{i+\tau,n}\left(\theta_{j}\right)\right\|_{\ast},\left\|\sfK_{i+\tau,n}\left(\theta_{j+1}\right)\right\|_{\ast}\right\}\left\|\theta_{j}-\theta_{j+1}\right\|\\
    &\le \sum_{j=i}^{i+\tau-1}\max\left\{\left\|\sfK_{i+\tau,n}\left(\theta_{j}\right)\right\|_{\ast},\left\|\sfK_{i+\tau,n}\left(\theta_{j+1}\right)\right\|_{\ast}\right\}\eta_{j}\left\|\sfK_{j,n}\left(\theta_{j}\right)\right\|_{\ast}, 
\end{align*}
based on Lemma \ref{lem:op:DAJ+12L62}.
\end{proof}

\begin{proof}[Proof of Theorem \ref{thm:op:SMDEx}]
Note that $\theta_{i}$ is $\calF_{i-1}$-measurable by definition.
It holds that
\begin{align*}
    &\E\left[f\left(\theta_{i}\right)-f\left(\theta^{\prime}\right)-F\left(\theta_{i};\xi_{i+\tau-1}\right)+F\left(\theta^{\prime};\xi_{i+\tau-1}\right)\right]\\
    &\le \sqrt{2}R\E\left[d_{\mathrm{Hel}}^{2}\left(P_{\left[i-1\right]|\F}^{i+\tau-1},\Pi\right)\right]^{1/2}\E\left[2G^{2}+\E\left[\left\|\sfG\left(\theta_{i};\xi_{i+\tau-1}\right)\right\|_{\ast}^{2}+\left\|\sfG\left(\theta^{\prime};\xi_{i+\tau-1}\right)\right\|_{\ast}^{2}|\calF_{i-1}\right]\right]^{1/2}\\
    &\le  2\sqrt{2}GR\E\left[d_{\mathrm{Hel}}^{2}\left(P_{\left[i-1\right]|\F}^{i+\tau-1},\Pi\right)\right]^{1/2}
\end{align*}
by Lemma \ref{lem:op:DAJ+12L63}, the Cauchy--Schwarz inequality, and Assumption (A1).
Thus,
\begin{align*}
    &\sum_{i=1}^{n-\tau+1}\E\left[f\left(\theta_{i}\right)-f\left(\theta^{\prime}\right)-F\left(\theta_{i};\xi_{i+\tau-1}\right)+F\left(\theta^{\prime};\xi_{i+\tau-1}\right)\right]\\
    &\le 2\sqrt{2}GR\sum_{i=1}^{n-\tau+1}\E\left[d_{\mathrm{Hel}}^{2}\left(P_{\left[i-1\right]|\F}^{i+\tau-1},\Pi\right)\right]^{1/2}.
\end{align*}
If $\tau=1$, $\sum_{i=1}^{n-\tau+1}\E\left[H_{i+\tau-1,n}\left(\theta_{i}\right)-H_{i+\tau-1,n}\left(\theta_{i+\tau-1}\right)\right]=0$ obviously.
Otherwise, Lemma \ref{lem:op:DAJ+12L64} and the Cauchy--Schwarz inequality lead to
\begin{align*}
    &\sum_{i=1}^{n-\tau+1}\E\left[H_{i+\tau-1,n}\left(\theta_{i}\right)-H_{i+\tau-1,n}\left(\theta_{i+\tau-1}\right)\right]\\
    &\le \sum_{i=1}^{n-\tau+1}\eta_{i}\sum_{j=i}^{i+\tau-2}\E\left[\max\left\{\left\|\sfK_{i+\tau,n}\left(\theta_{j}\right)\right\|_{\ast},\left\|\sfK_{i+\tau,n}\left(\theta_{j+1}\right)\right\|_{\ast}\right\}\left\|\sfK_{j,n}\left(\theta_{j}\right)\right\|_{\ast}\right]\\
    &\le \sum_{i=1}^{n-\tau+1}\eta_{i}\sum_{j=i}^{i+\tau-2}\E\left[\max\left\{\left\|\sfK_{i+\tau,n}\left(\theta_{j}\right)\right\|_{\ast}^{2},\left\|\sfK_{i+\tau,n}\left(\theta_{j+1}\right)\right\|_{\ast}^{2}\right\}\right]^{1/2}\E\left[\left\|\sfK_{j,n}\left(\theta_{j}\right)\right\|_{\ast}^{2}\right]^{1/2}\\
    &\le \sum_{i=1}^{n-\tau+1}\eta_{i}\sum_{j=i}^{i+\tau-2}\E\left[\left\|\sfK_{i+\tau,n}\left(\theta_{j}\right)\right\|_{\ast}^{2}+\left\|\sfK_{i+\tau,n}\left(\theta_{j+1}\right)\right\|_{\ast}^{2}\right]^{1/2}\E\left[\left\|\sfK_{j,n}\left(\theta_{j}\right)\right\|_{\ast}^{2}\right]^{1/2}\\
    &\le \sqrt{2}\left(\tau-1\right)K_{n}^{2}\sum_{i=1}^{n-\tau+1}\eta_{i},
\end{align*}
which holds even for $\tau=1$.
Lemma \ref{lem:op:DAJ+12L61} yields
\begin{align*}
    \sum_{i=\tau}^{n}\E\left[H_{i,n}\left(\theta_{i}\right)-H_{i,n}\left(\theta^{\prime}\right)\right]
    \le \frac{R^{2}}{2\eta_{n}}+\frac{1}{2}\sum_{i=\tau}^{n}\eta_{i}\E\left[\left\|\sfK_{i,n}\left(\theta_{i}\right)\right\|_{\ast}^{2}\right]
    \le \frac{R^{2}}{2\eta_{n}}+\frac{K_{n}^{2}}{2}\sum_{i=\tau}^{n}\eta_{i}.
\end{align*}
Naturally, 
\begin{align*}
    \E\left[f\left(\theta_{i}\right)-f\left(\theta^{\prime}\right)\right]\le \E\left[\left\langle \sfG\left(\theta_{i}\right),\theta_{i}-\theta^{\prime}\right\rangle\right]\le \E\left[\left\|\sfG\left(\theta_{i}\right)\right\|_{\ast}^{2}\right]^{1/2}\E\left[\left\|\theta_{i}-\theta^{\prime}\right\|^{2}\right]^{1/2}\le GR.
\end{align*}
Finally, we obtain
\begin{align*}
    \E\left[\sum_{i=1}^{n}\left(f\left(\theta_{i}\right)-f\left(\theta^{\prime}\right)\right)\right]
    &\le 2\sqrt{2}GR\sum_{i=1}^{n-\tau+1}\E\left[d_{\mathrm{Hel}}^{2}\left(P_{\left[i-1\right]}^{i+\tau-1},\Pi\right)\right]^{1/2}+\sqrt{2}\left(\tau-1\right)K_{n}^{2}\sum_{i=1}^{n-\tau+1}\eta_{i}\\
    &\quad+\frac{R^{2}}{2\eta_{n}}+\frac{K_{n}^{2}}{2}\sum_{i=\tau}^{n}\eta_{i}+\left(\tau-1\right)GR\\
    &\quad+\E\left[\resid_{\tau-1,n}\left(\left\{\theta_{i}\right\}\right)-\resid_{\tau-1,n}\left(\theta^{\prime}\right)\right]
\end{align*}
and the statement holds by substituting $\tau_{\E}\left(P_{|\F},\epsilon\right)$ for $\tau$.
\end{proof}

\section{Moments of diffusion processes}
We provide the following well-known results to check how the constants in the bounds are dependent on the parameters.
For simplicity, let $\E$ and $X_{t}$ denote $\E_{x}^{a,b}$ and $X_{t}^{a,b}$.

\begin{lemma}\label{lem:st:ExSup}
    Under ($H_{\alpha}^{a}$) and ($H_{\beta}^{b}$), for all $p>0$, there exists a constant $c>0$ dependent only on $\left(\kappa_{0},\kappa_{1},d,p\right)$ such that for all $t\ge0$,
    \begin{align*}
        \E\left[\sup_{u\in\left[t,t+1\right]}\left\|X_{u}\right\|_{2}^{p}|X_{t}\right]&\le c\left(1+\left\|X_{t}\right\|_{2}^{p}\right)
    \end{align*}
    almost surely.
\end{lemma}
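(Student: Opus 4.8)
The plan is to use time-homogeneity and the Markov property to reduce the conditional bound to an unconditional one started from a deterministic point: it suffices to prove that there is a constant $c>0$ depending only on $\left(\kappa_{0},\kappa_{1},d,p\right)$ with $\E_{x}^{a,b}\left[\sup_{u\in[0,1]}\|X_{u}^{a,b}\|_{2}^{p}\right]\le c\left(1+\|x\|_{2}^{p}\right)$ for every $x\in\R^{d}$, since then the stated inequality follows by replacing $x$ with $X_{t}$. Moreover, by Jensen's inequality it is enough to treat $p\ge 2$, because for $0<p<2$ one has $\E[\sup_{u}\|X_{u}\|_{2}^{p}]\le\E[\sup_{u}\|X_{u}\|_{2}^{2}]^{p/2}\le(c(1+\|x\|_{2}^{2}))^{p/2}\le c'(1+\|x\|_{2}^{p})$.

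The key steps are standard SDE moment estimates. From the upper ellipticity bound in ($H_{\alpha}^{a}$) we get $\|a(x)\|_{F}^{2}=\tr a^{\otimes2}(x)\le d\kappa_{0}$, so the diffusion coefficient is uniformly bounded; from ($H_{\beta}^{b}$) together with $\beta\le 1$ (so that $\|x\|_{2}^{\beta}\vee\|x\|_{2}\le 1+\|x\|_{2}$) we get the linear growth $\|b(x)\|_{2}\le\kappa_{1}(2+\|x\|_{2})$. Introduce the localizing stopping times $\sigma_{N}:=\inf\{u\ge 0:\|X_{u}^{a,b}\|_{2}\ge N\}$. Writing $X_{u}^{a,b}=x+\int_{0}^{u}b(X_{s}^{a,b})\diff s+\int_{0}^{u}a(X_{s}^{a,b})\diff w_{s}$, applying $\|v_{1}+v_{2}+v_{3}\|_{2}^{p}\le 3^{p-1}(\|v_{1}\|_{2}^{p}+\|v_{2}\|_{2}^{p}+\|v_{3}\|_{2}^{p})$, Hölder's inequality in time for the drift term, and the Burkholder--Davis--Gundy inequality for the stochastic integral (whose quadratic variation is bounded by $d\kappa_{0}\cdot u$), we obtain for all $r\in[0,1]$
\begin{align*}
    \E\left[\sup_{u\le r}\left\|X_{u\wedge\sigma_{N}}^{a,b}\right\|_{2}^{p}\right]\le C\left(1+\|x\|_{2}^{p}\right)+C\int_{0}^{r}\E\left[\sup_{v\le s}\left\|X_{v\wedge\sigma_{N}}^{a,b}\right\|_{2}^{p}\right]\diff s,
\end{align*}
where $C$ depends only on $\left(\kappa_{0},\kappa_{1},d,p\right)$. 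Since the left-hand side is at most $N^{p}$, it is finite, so Gronwall's inequality gives $\E[\sup_{u\le 1}\|X_{u\wedge\sigma_{N}}^{a,b}\|_{2}^{p}]\le C(1+\|x\|_{2}^{p})e^{C}$ uniformly in $N$; letting $N\to\infty$ and invoking Fatou's lemma (with $\sigma_{N}\uparrow\infty$ a.s., which follows since the linear-growth and boundedness bounds preclude explosion) yields the claim with $c:=Ce^{C}$.

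The main obstacle is only a mild technical one: securing the a priori finiteness of $\E[\sup_{u\le 1}\|X_{u}^{a,b}\|_{2}^{p}]$ before Gronwall's inequality can be applied, which is precisely why the localizing sequence $\sigma_{N}$ is needed, and keeping careful track that every constant depends only on $\left(\kappa_{0},\kappa_{1},d,p\right)$ and is uniform in $x$ and $t$ --- this holds because the bound $\|a\|_{F}^{2}\le d\kappa_{0}$ and the growth bound on $b$ are uniform and the SDE is time-homogeneous. The rest of the argument is routine.
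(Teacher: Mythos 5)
Your proof is correct and follows essentially the same route as the paper's: decompose the solution into initial value, drift integral and stochastic integral, bound the drift via its linear growth $\left\|b\left(x\right)\right\|_{2}\le\kappa_{1}\left(2+\left\|x\right\|_{2}\right)$ and the martingale part via the Burkholder--Davis--Gundy inequality together with $\left\|a\right\|_{F}^{2}\le\kappa_{0}d$, then close with Gronwall's inequality. The only deviations are technical refinements rather than a different argument---reducing to the unconditional statement started from a deterministic point via the Markov property, reducing to $p\ge2$ instead of $p\ge1$, and introducing the localizing times $\sigma_{N}$ with Fatou's lemma to secure the a priori finiteness needed before Gronwall, a point the paper's proof passes over silently.
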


\begin{proof}
Using H\"{o}lder's inequality, we assume that $p\ge1$ without loss of generality.
For any $u\in\left[t,t+1\right]$,
\begin{align*}
    f_{X_{t}}\left(u\right)&:=\E\left[\sup_{u^{\ast}\in\left[t,u\right]}\left\|X_{u^{\ast}}\right\|_{2}^{p}|X_{t}\right]\\
    &= \E\left[\sup_{u^{\ast}\in\left[t,u\right]}\left\|X_{t}+\int_{t}^{u^{\ast}}b\left(X_{u'}\right)\diff u'+\int_{t}^{u^{\ast}}a\left(X_{u'}\right)\diff w_{u'}\right\|_{2}^{p}|X_{t}\right]\\
    &\le 3^{p-1}\left\|X_{t}\right\|_{2}^{p}+3^{p-1}\left(\int_{t}^{u}\E\left[\kappa_{1}\left(2+\left\|X_{u'}\right\|_{2}\right)|X_{t}\right]\diff u'\right)^{p}\\
    &\quad+3^{p-1}c\E\left[\left(\int_{t}^{u}\left\|a\left(X_{u'}\right)\right\|_{F}^{2}\diff u'\right)^{p/2}|X_{t}\right]\\
    &\le 3^{p-1}\left\|X_{t}\right\|_{2}^{p}+6^{p-1}\kappa_{1}^{p}\int_{t}^{u}\E\left[2^{p}+\left\|X_{u'}\right\|_{2}^{p}|X_{t}\right]\diff u'+3^{p-1}c\left(\kappa_{0}d\right)^{p/2}\\
    &\le 3^{p-1}\left\|X_{t}\right\|_{2}^{p}+6^{p-1}\kappa_{1}^{p}\left(2^{p}+\int_{t}^{u}\E\left[\sup_{u^{\ast}\in\left[t,u'\right]}\left\|X_{u^{\ast}}\right\|_{2}^{p}|X_{t}\right]\diff u'\right)+3^{p-1}c\left(\kappa_{0}d\right)^{p/2}\\
    &=3^{p-1}\left(\left\|X_{t}\right\|_{2}^{p}+2^{2p-1}\kappa_{1}^{p}+c\left(\kappa_{0}d\right)^{p/2}\right)+6^{p-1}\kappa_{1}^{p}\int_{t}^{u}f_{X_{t}}\left(u'\right)\diff u',
\end{align*}
where $c$ is a constant dependent only on $p$ given by the Burkholder--Davis--Gundy inequality.
Hence, Gronwall's inequality yields the conclusion.
\end{proof}

\begin{lemma}\label{lem:st:ExNorm}
    Under ($H_{\alpha}^{a}$) and ($H_{\beta}^{b}$), for all $p>0$, there exists a constant $c>0$ dependent only on $\left(\kappa_{0},\kappa_{1},d,p\right)$ such that for all $t\ge0$ and $h\in\left(0,1\right]$
    \begin{align*}
        \E\left[\sup_{u\in\left[t,t+h\right]}\left\|X_{u}-X_{t}\right\|_{2}^{p}|X_{t}\right]\le ch^{p/2}\left(1+\left\|X_{t}\right\|_{2}^{p}\right)
    \end{align*}
    almost surely.
\end{lemma}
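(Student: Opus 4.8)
The plan is to mimic the proof of Lemma \ref{lem:st:ExSup}, decomposing the increment as $X_u-X_t=\int_t^u b(X_s)\,\diff s+\int_t^u a(X_s)\,\diff w_s$ and controlling the drift and martingale contributions separately. First I would reduce to the case $p\ge 2$: by the conditional Jensen inequality, $\E[\sup_{u\in[t,t+h]}\|X_u-X_t\|_2^p|X_t]\le(\E[\sup_{u\in[t,t+h]}\|X_u-X_t\|_2^2|X_t])^{p/2}$ for $p\in(0,2]$, so once the bound $\E[\sup_{u\in[t,t+h]}\|X_u-X_t\|_2^2|X_t]\le c\,h(1+\|X_t\|_2^2)$ is established, the elementary estimate $(h(1+\|X_t\|_2^2))^{p/2}=h^{p/2}(1+\|X_t\|_2^2)^{p/2}\le c\,h^{p/2}(1+\|X_t\|_2^p)$ handles all $p\in(0,2]$.

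For $p\ge 2$, using $\|v_1+v_2\|_2^p\le 2^{p-1}(\|v_1\|_2^p+\|v_2\|_2^p)$ I split the estimate into a drift part and a martingale part. For the drift part, Hölder's inequality in time gives $\sup_{u\in[t,t+h]}\|\int_t^u b(X_s)\,\diff s\|_2^p\le h^{p-1}\int_t^{t+h}\|b(X_s)\|_2^p\,\diff s$; since $\beta\le 1$, $(H_{\beta}^{b})$ yields $\|b(x)\|_2\le\kappa_1(2+\|x\|_2)$ and hence $\|b(X_s)\|_2^p\le c(1+\|X_s\|_2^p)$, with $c$ depending only on $(\kappa_1,p)$. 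Taking conditional expectations, the Markov property together with Lemma \ref{lem:st:ExSup} applied on $[t,t+1]\supset[t,t+h]$ bounds $\E[\|X_s\|_2^p|X_t]$ by $c(1+\|X_t\|_2^p)$ uniformly in $s\in[t,t+h]$, so the drift contribution is at most $c\,h^p(1+\|X_t\|_2^p)\le c\,h^{p/2}(1+\|X_t\|_2^p)$ because $h\le 1$. For the martingale part, the vector-valued Burkholder--Davis--Gundy inequality gives $\E[\sup_{u\in[t,t+h]}\|\int_t^u a(X_s)\,\diff w_s\|_2^p|X_t]\le c_p\,\E[(\int_t^{t+h}\|a(X_s)\|_F^2\,\diff s)^{p/2}|X_t]$, and the upper ellipticity bound in $(H_{\alpha}^{a})$ forces $\|a(x)\|_F^2=\tr(a^{\otimes2}(x))\le\kappa_0 d$, so this term is at most $c_p(\kappa_0 d)^{p/2}h^{p/2}\le c\,h^{p/2}(1+\|X_t\|_2^p)$. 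Summing the two contributions proves the claim for $p\ge 2$, and the reduction above finishes all $p>0$.

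I do not expect a genuine obstacle here: this is a standard increment bound for the solution of a SDE with linearly growing drift and bounded (non-degenerate) diffusion. The only points requiring a little care are the initial reduction to $p\ge 2$, so that the power inequalities and BDG apply cleanly, and the bookkeeping needed to confirm that every constant depends only on $(\kappa_0,\kappa_1,d,p)$; the latter follows from the uniform Frobenius bound on $a$ coming from $(H_{\alpha}^{a})$, the linear-growth bound on $b$ coming from $(H_{\beta}^{b})$, and the dependence structure already recorded in Lemma \ref{lem:st:ExSup}.
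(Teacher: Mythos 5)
Your proof is correct and follows essentially the same route as the paper: split the increment into drift and martingale parts, bound the martingale part via the Burkholder--Davis--Gundy inequality together with the uniform bound $\left\|a\right\|_{F}^{2}\le\kappa_{0}d$ from ($H_{\alpha}^{a}$), and control the drift part through linear growth of $b$ and Lemma \ref{lem:st:ExSup}. The only differences are cosmetic (you reduce to $p\ge2$ by conditional Jensen and push the power inside the time integral by H\"{o}lder, while the paper reduces to $p\ge1$ and bounds the integrated drift norm directly), and your constant-tracking matches the claimed dependence on $\left(\kappa_{0},\kappa_{1},d,p\right)$.
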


\begin{proof}
It is sufficient to consider $p\ge 1$ owing to H\"{o}lder's inequality. 
Clearly, for all $u\in\left[t,t+h\right]$,
\begin{align*}
    \left\|X_{u}-X_{t}\right\|_{2}^{p}\le 2^{p-1}\left\|\int_{t}^{u}b\left(X_{s}\right)\diff s\right\|_{2}^{p}+2^{p-1}\left\|\int_{t}^{u}a\left(X_{s}\right)\diff w_{s}\right\|_{2}^{p}.
\end{align*}
The Burkholder--Davis--Gundy inequality verifies the existence of $c>0$ dependent only on $p$ such that
\begin{align*}
    \E\left[\sup_{u\in\left[t,t+h\right]}\left\|\int_{t}^{u}a\left(X_{s}\right)\diff w_{s}\right\|_{2}^{p}|X_{t}\right]\le c\E\left[\left(\int_{t}^{t+h}\left\|a\left(X_{s}\right)\right\|_{F}^{2}\diff s\right)^{p/2}|X_{t}\right]\le c\left(\kappa_{0}dh\right)^{p/2}.
\end{align*}
Moreover, it holds that
\begin{align*}
    \sup_{u\in\left[t,t+h\right]}\left\|\int_{t}^{u}b\left(X_{s}\right)\diff s\right\|_{2}^{p}&\le \left(\int_{t}^{t+h}\left\|b\left(X_{s}\right)\right\|_{2}\diff s\right)^{p}\le h^{p}\kappa_{1}^{p}\left(2+\sup_{s\in\left[t,t+h\right]}\left\|X_{s}\right\|_{2}\right)^{p}.
\end{align*}
We therefore obtain the result using Lemma \ref{lem:st:ExSup}.
\end{proof}

\begin{lemma}\label{lem:st:NormEx}
    Under ($H_{\alpha}^{a}$) and ($H_{\beta}^{b}$), for all $\beta\in\left[0,1\right]$, there exists a constant $c>0$ dependent only on $\left(\beta,\kappa_{0},\kappa_{1},d\right)$ such that for all $t\ge0$ and $h\in\left(0,1\right]$,
    \begin{align*}
        \left\|\E\left[X_{t+h}-X_{t}-hb\left(X_{t}\right)|X_{t}\right]\right\|_{2}&\le c\left(1+\left\|X_{t}\right\|_{2}\right)\left(h^{1+\beta/2}\right)
    \end{align*}
    almost surely.
\end{lemma}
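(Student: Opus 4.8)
The plan is to exploit the stochastic integral representation of the increment $X_{t+h}-X_{t}$, kill the martingale part by conditioning on $X_{t}$, and estimate the remaining drift contribution using the H\"older continuity of $b$ from $(H_{\beta}^{b})$ together with the moment estimate of Lemma \ref{lem:st:ExNorm}.

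First I would write, from \eqref{eq:SDE},
\[
    X_{t+h}-X_{t}=\int_{t}^{t+h}b\left(X_{s}\right)\diff s+\int_{t}^{t+h}a\left(X_{s}\right)\diff w_{s},
\]
so that
\begin{align*}
    \E\left[X_{t+h}-X_{t}-hb\left(X_{t}\right)|X_{t}\right]
    &=\E\left[\int_{t}^{t+h}\left(b\left(X_{s}\right)-b\left(X_{t}\right)\right)\diff s|X_{t}\right]\\
    &\quad+\E\left[\int_{t}^{t+h}a\left(X_{s}\right)\diff w_{s}|X_{t}\right].
\end{align*}
Since $(H_{\alpha}^{a})$ forces $\left\|a\left(x\right)\right\|_{2}^{2}\le\kappa_{0}$ for all $x$, the integrand of the stochastic integral is uniformly bounded, and as the moments of $X$ are finite by Lemma \ref{lem:st:ExSup} the stochastic integral is a genuine square-integrable martingale; hence its conditional expectation given $X_{t}$ vanishes a.s., and only the drift term survives.

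Then I would estimate the drift term by moving the norm inside the integral and applying $(H_{\beta}^{b})$:
\[
    \left\|\E\left[\int_{t}^{t+h}\left(b\left(X_{s}\right)-b\left(X_{t}\right)\right)\diff s|X_{t}\right]\right\|_{2}\le\kappa_{1}\int_{t}^{t+h}\E\left[\left\|X_{s}-X_{t}\right\|_{2}^{\beta}\vee\left\|X_{s}-X_{t}\right\|_{2}|X_{t}\right]\diff s.
\]
Using $u^{\beta}\vee u\le u^{\beta}+u$ for $u\ge0$, Lemma \ref{lem:st:ExNorm} with $p=\beta$ and with $p=1$ (the case $\beta=0$ being immediate since then $\left\|X_{s}-X_{t}\right\|_{2}^{\beta}=1$), together with $s-t\le h\le1$, $\left\|X_{t}\right\|_{2}^{\beta}\le1+\left\|X_{t}\right\|_{2}$, and $h^{1/2}\le h^{\beta/2}$ (valid since $\beta\le1$ and $h\le1$), the integrand is bounded a.s.\ by $c\left(1+\left\|X_{t}\right\|_{2}\right)h^{\beta/2}$ with $c$ depending only on $\left(\beta,\kappa_{0},\kappa_{1},d\right)$; integrating over an interval of length $h$ produces the extra factor $h$ and gives the claimed bound $c'\left(1+\left\|X_{t}\right\|_{2}\right)h^{1+\beta/2}$.

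I do not expect a genuine obstacle. The two points deserving a little care are: (a) the justification that the stochastic integral has zero conditional mean, which is precisely where the uniform bound $\left\|a\right\|_{2}^{2}\le\kappa_{0}$ from $(H_{\alpha}^{a})$ and the moment control of Lemma \ref{lem:st:ExSup} are used; and (b) the bookkeeping of the maximum in $(H_{\beta}^{b})$ for small $\beta$, which is absorbed cleanly by the crude inequality $u^{\beta}\vee u\le u^{\beta}+u$ and then handled term by term.
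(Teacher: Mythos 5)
Your proposal is correct and follows essentially the same route as the paper's proof: the same decomposition of $X_{t+h}-X_{t}-hb\left(X_{t}\right)$ into drift and martingale parts, the bound via $(H_{\beta}^{b})$, and Lemma \ref{lem:st:ExNorm} applied with $p=\beta$ and $p=1$ before integrating in time. The only difference is that you spell out the vanishing of the conditional expectation of the stochastic integral (via boundedness of $a$ from $(H_{\alpha}^{a})$) and the $\beta=0$ case, which the paper leaves implicit; this is a harmless, indeed welcome, addition.
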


\begin{proof}
Clearly, for any $t\ge0$ and $h\in\left(0,1\right]$
\begin{align*}
    \left\|\E\left[X_{t+h}-X_{t}-hb\left(X_{t}\right)|X_{t}\right]\right\|_{2}&=\left\|\E\left[\int_{t}^{t+h}\left(b\left(X_{u}\right)-b\left(X_{t}\right)\right)\diff u|X_{t}\right]\right\|_{2}\\
    &\le \E\left[\int_{t}^{t+h}\left\|b\left(X_{u}\right)-b\left(X_{t}\right)\right\|_{2}\diff u|X_{t}\right]\\
    &\le \int_{t}^{t+h}\kappa_{1}\E\left[\left\|X_{u}-X_{t}\right\|_{2}^{\beta}+\left\|X_{u}-X_{t}\right\|_{2}|X_{t}\right]\diff u.
\end{align*}
Lemma \ref{lem:st:ExNorm} yields that for all $\beta\in\left(0,1\right]$, there exists a constant $c>0$ dependent only on $\left(\beta,\kappa_{0},\kappa_{1},d\right)$ such that
\begin{align*}
    \left\|\E\left[X_{t+h}-X_{t}-hb\left(X_{t}\right)|X_{t}\right]\right\|_{2}&\le c \left(1+\left\|X_{t}\right\|_{2}\right)\int_{0}^{h}\left(u^{\beta/2}+u^{1/2}\right)\diff u\\
    &\le c\left(1+\left\|X_{t}\right\|_{2}\right)\left(h^{1+\beta/2}\right).
\end{align*}
Naturally, the same bound holds for $\beta=0$.
\end{proof}

\bibliographystyle{apalike-month}
\bibliography{bibOPE}

\end{document}